\pgfplotsset{compat=1.14}
\newtheorem{theorem}{Theorem}
\newtheorem{lemma}[theorem]{Lemma}
\newtheorem{corollary}[theorem]{Corollary}
\newtheorem{definition}[theorem]{Definition}
\newtheorem{remark}[theorem]{Remark}
\numberwithin{theorem}{section}
\numberwithin{equation}{section}
\def\N {{\mathbb N}}
\def\R {{\mathbb R}}
\DeclareMathOperator{\diff}{d\!}
\DeclareMathOperator{\ch}{ch}
\DeclareMathOperator{\op}{op}
\DeclarePairedDelimiter\abs{\lvert}{\rvert}
\DeclarePairedDelimiter\norm{\lVert}{\rVert}
\begin{document}
	\title{Uniform Fourier restriction for convex curves}
	\date{\today}
	
	\author{Marco Fraccaroli}
	\address{Mathematisches Institut, Universit\"{a}t Bonn, Endenicher Allee 60, 53115 Bonn, Germany}
	\email{mfraccar@math.uni-bonn.de}
	\address{BCAM - Basque Center for Applied Mathematics, Mazarredo, 14 E48009 Bilbao, Basque Country – Spain.}
	\email{mfraccaroli@bcamath.org}
	
	\subjclass[2010]{42B10, 42B25}
	\keywords{Fourier restriction, maximal functions, convex curve, affine arclength measure, affine measure.}
	
	\begin{abstract}
		{We extend the estimates for maximal Fourier restriction operators proved by M\"{u}ller, Ricci, and Wright in \cite{MR3960255} and Ramos in \cite{MR4055940} to the case of arbitrary convex curves in the plane, with constants uniform in the curve. The improvement over M\"{u}ller, Ricci, and Wright and Ramos is given by the removal of the $\mathcal{C}^2$ regularity condition on the curve. This requires the choice of an appropriate measure for each curve, that is suggested by an affine invariant construction of Oberlin in \cite{MR1960918}. As corollaries, we obtain a uniform Fourier restriction theorem for arbitrary convex curves and a result on the Lebesgue points of the Fourier transform on the curve. }
	\end{abstract}
	
	\maketitle

\section{Introduction}

The study of the restriction phenomena for the Fourier transform in $\R^n$ has been an active research topic in harmonic analysis over the last decades. The most common instance of it, a \emph{Fourier restriction estimate}, comes in the form of the following inequality for every Schwartz function $f \in \mathcal{S}(\R^n)$
\begin{equation*} 
\norm{\widehat{f}_{|_S}}_{L^q(S,\nu)} \leq C(p,q,S,\nu) \norm{f}_{L^p(\R^n)},
\end{equation*}
where $\widehat{f}$ is the Fourier transform of $f$, $S$ a hypersurface with appropriate curvature properties, $\nu$ a suitable measure on $S$, the exponents $p$ and $q$ vary in an appropriate range, and the constant $C(p,q,S,\nu)$ is independent of $f$. The a priori estimate in the previous display guarantees the existence of a bounded restriction operator $\mathcal{R} \colon L^p(\R^n) \to L^q(S,\nu)$ such that $\mathcal{R} f = \widehat{f}$ on $S$ when $f \in \mathcal{S}(\R^n)$. Such Fourier restriction estimates were first studied by Fefferman and Stein who proved a result in any dimension (\cite{MR257819}, pg. 28). This result was later improved by the celebrated Stein-Tomas method (\cite{MR1232192}, \cite{MR358216}) which focuses on the case $q=2$. Since then, a huge mathematical effort has been put into studying the Fourier restriction phenomena leading to the development of many new techniques. Despite that, many problems for any arbitrary dimension $n \geq 3$ are still open. For example, the question about sufficient conditions on the exponents $p$ and $q$ in order for a Fourier restriction estimate to hold true. 

In fact, standard examples (constant functions, Knapp examples) in the case of the sphere $S=\mathbb{S}^{n-1}$ with the induced Borel measure $\sigma$  provide necessary conditions on the range of exponents $p$ and $q$ in order for the inequality in the previous display to hold true, namely
\begin{equation*}
1 \leq p < \frac{2n}{n+1}, \qquad \qquad q \leq \frac{n-1}{n+1} p',
\end{equation*}
where $\frac{1}{p}+\frac{1}{p'}=1$. The main conjecture in the theory of Fourier restriction is that these conditions are sufficient too. We refer to the exposition of Tao in \cite{MR2087245} for a description of the aforementioned standard examples. We point to the same reference also for a more exhaustive introduction to the research topic of Fourier restriction, as well as an overview of the results up to 2004. 

In the case of a $\mathcal{C}^2$ convex curve $\Gamma$ in the plane $\R^2$ the conditions on the exponents are also sufficient. Sharp estimates were proved first for the circle $\mathbb{S}^1$ by Zygmund in \cite{MR387950}, and for more general curves by Carleson and Sj\"{o}lin in \cite{MR361607} and Sj\"{o}lin in \cite{MR385437}. In fact, in \cite{MR385437} Sj\"{o}lin proved a uniform Fourier restriction result for such curves upon the choice of a specific measure $\nu = \nu(\Gamma)$ on each curve. This is the so called affine arclength measure, encompassing the curvature properties of the $\Gamma$. In the case of the circle, it coincides with the induced Borel measure $\sigma$, thus proving the sharpness of the result of Sj\"{o}lin.

In \cite{MR3960255} M\"{u}ller, Ricci, and Wright addressed a different feature of the Fourier restriction phenomena, namely the pointwise relation between $\mathcal{R}f$ and $\widehat{f}$ for an arbitrary function $f \in L^p(\R^n)$. In the case of a $\mathcal{C}^2$ convex curve and a function $f \in L^p(\R^2)$, with $1 \leq p < 8/7$ they proved that $\nu$-almost every point of the curve is a Lebesgue point for $\widehat{f}$. Moreover, they showed that the regularized value of $\widehat{f}$ coincides with that of $\mathcal{R}f$ at $\nu$-almost every point of the curve. The main ingredient in their proof is given by the estimates for a certain maximal Fourier restriction operator $\mathcal{M}$ defined as follows. For every Schwartz function $f \in \mathcal{S}(\R^2)$ we define
\begin{equation} \label{eq:maxopr_1}
\mathcal{M} \widehat{f} (x) \coloneqq  \sup_{R} \abs[\Big]{ \int_{\R^2} \widehat{f} (x-y) \chi_R(y) \diff y},
\end{equation}
where $\chi_R$ is a bump function adapted to $R$ normalized in $L^1(\R^2)$ and the supremum is taken over all rectangles $R$ centred at the origin with sides parallel to the axes. Next, they use the estimate 
\begin{equation*}
M \widehat{f} \leq (\mathcal{M} \widehat{h})^{\frac{1}{2}},
\end{equation*}
where $M$ is the classical two-parameter maximal operator and $h$ is defined by $\widehat{h} = \abs{\widehat{f}}^2$. To obtain the desired result about the Lebesgue points for $f \in L^p(\R^2)$, they need to bound the norms of $h$ by those of $f$. This forces to assume the additional condition $p < 8/7$ on the exponent.

In \cite{MR4055940} Ramos extended their result to the full range $1 \leq p < 4/3$ in the case of the circle $\mathbb{S}^1$. The improvement relies on the estimates he proved for a more general class of maximal Fourier restriction operators 
\begin{equation*}
\{ \mathcal{M}_g \colon \norm{g}_{L^\infty(\R^2)} = 1 \},
\end{equation*}
where for every function $g$ normalized in $L^{\infty}(\R^2)$ we define $\mathcal{M}_g$ as follows. For every Schwartz function $f \in \mathcal{S}(\R^2)$ we define 
\begin{equation} \label{eq:maxopr_2}
	\mathcal{M}_g \widehat{f} (x) \coloneqq  \sup_{R} \abs[\Big]{ \int_{\R^2} \widehat{f} (x-y) g(x-y) \abs{R}^{-1} 1_R(y) \diff y},
\end{equation}
where the supremum is taken over all rectangles $R$ centred at the origin with sides parallel to the axes. In particular, the freedom in the choice of $g$ allows Ramos to bring the absolute value inside the integral defining the averages, thus bypassing the artificial limitation arising in M\"{u}ller, Ricci, and Wright argument.

The line of investigation about the boundedness properties of maximal Fourier restriction operators initiated by M\"{u}ller, Ricci, and Wright has been developed further in a series of papers that followed up. In \cite{MR4541301} Vitturi studied estimates for a maximal Fourier restriction operator in the case of the sphere $\mathbb{S}^{n-1}$ in $\R^n$ for any arbitrary dimension $n \geq 3$. The operator considered is of the form described in \eqref{eq:maxopr_1} with the supremum taken over averages on balls. Vitturi used the estimates on this operator to prove the analogue of the Lebesgue points property of $\widehat{f}$ for every function $f \in L^p(\R^2)$ with $1 \leq p \leq 8/7$. The range of exponents was later improved by Ramos in \cite{MR4055940} to $1 \leq p \leq 4/3$ considering maximal Fourier restriction operators of the form described in \eqref{eq:maxopr_2} with the supremum taken over averages on balls. It is worth noting that in the case of dimension $n \geq 3$, due to the range of Stein-Tomas estimates, the endpoint $p = 4/3$ is recovered, as opposed to the case of dimension $n=2$. 

In parallel, in \cite{MR4001074} Kova\v{c} studied estimates for certain variational Fourier restriction operators in any arbitrary dimension $n \geq 2$. These operators are defined by variation norms, rather than the $L^\infty$ norm, on averages of the form of those appearing on the right hand side in \eqref{eq:maxopr_1} computed with respect to isotropic rescaling of an arbitrary measure $\mu$. He developed an abstract method to upgrade Fourier restriction estimates with $p<q$ to estimates for the variational Fourier restriction operators with the same exponents. As a consequence, he obtained a quantitative version of the qualitative result about the convergence of averages in Lebesgue points. Kova\v{c} provided sufficient conditions for the method to work. These conditions are expressed in terms of certain decay estimates on the gradient of $\widehat{\mu}$. Together with Oliveira e Silva, he later improved over the sufficient conditions in \cite{MR4282056}. 

Next, in \cite{MR4395599} Ramos studied estimates for certain maximal Fourier restriction operators associated with an arbitrary measure $\mu$ in the case of dimension $n=2$ and $n=3$. Once again, he considered operators of the form described in \eqref{eq:maxopr_2} with the supremum taken over averages computed with respect to isotropic rescaling of $\mu$. Ramos provided sufficient conditions on the measure $\mu$ to obtain estimates for the maximal Fourier restriction operators. These conditions are expressed in terms of the boundedness properties close to $L^2(\R^n)$ of the maximal function associated with $\mu$. In particular, he recovered the case of the spherical measures that, in dimension $n=2$ and $n=3$, do not satisfy the sufficient conditions stated in \cite{MR4001074, MR4282056}. Since Kova\v{c} and Oliveira e Silva use stronger norms but weaker averages than Ramos, the results in \cite{MR4001074, MR4282056} and those in \cite{MR4055940, MR4395599} are not comparable, and we refer to those papers for an exposition of the connections between their results.

Finally, in \cite{MR4446236} Jesurum studied estimates for a maximal Fourier restriction operator in the case of the moment curve $\{ (t,\frac{1}{2}t^2, \dots, \frac{1}{n} t^n) \colon t \in \R \}$ in $\R^n$ for any arbitrary dimension $n \geq 3$. The operator considered is of the form described in \eqref{eq:maxopr_2} with the supremum taken over averages on balls. Jesurum followed the argument of Drury in \cite{MR781781}, where Drury proved Fourier restriction estimates for the moment curve in the full range $1 \leq p < (n^2+n+2)/(n^2+n)$, $q = 2p'/(n^2+n)$. In particular, Jesurum recovered the analogue of the Lebesgue points property of $\widehat{f}$ for every function $f \in L^p(\R^n)$ with $p$ in the same range of exponents.

In fact, both Ramos in \cite{MR4395599} and Jesurum in \cite{MR4446236} considered also stronger maximal Fourier restriction operators. In particular, in the definition of these operators they substituted the supremum taken over $L^1$ averages on balls with that over $L^r$ averages for arbitrary $r \geq 1$. By H\"{o}lder's inequality, the operators are increasing in $r$. We refer to those papers for details about the estimates for these maximal Fourier restriction operators, as well as the analysis of the threshold values for $r \geq 1$ in relation to such estimates.

In this paper, we are concerned with extending the results of M\"{u}ller, Ricci, and Wright in \cite{MR3960255} and Ramos in \cite{MR4055940} to the case of arbitrary \emph{convex curves} in the plane, uniformly in the curve. Such curves are the boundaries of non-empty open convex sets in $\R^2$. Passing from the case of the circle $\mathbb{S}^1$ to the case of an arbitrary $\mathcal{C}^2$ convex curve $\Gamma$ is straight-forward upon the choice of the affine arclength measure on $\Gamma$. We are going to introduce such measure in a moment. The main point of the paper is the removal of the $\mathcal{C}^2$ regularity condition on the curve. It goes through the choice of a suitable extension of the affine arclength measure, which was suggested by an affine invariant construction described by Oberlin in \cite{MR1960918}. The desired extension of the results then follows the line of proof by Ramos up to the appropriate modifications. 

We turn now to the description of two measures on an arbitrary convex curve $\Gamma$ in the plane. We elaborate in more detail in Section~\ref{sec:preliminaries} and Appendix~\ref{sec:appendix}. A first measure $\nu$ is built from the arclength parametrization such a curve always admits
\begin{equation*}
z \colon J \to \Gamma \subseteq \R^2,
\end{equation*}
where $J$ is an interval in $\R$, possibly degenerate. Let $m$ be the Lebesgue measure on $J$. The first and second derivatives $z'$ and $z''$ with respect to $m$ are functions well-defined pointwise $m$-almost everywhere on $J$. We define a measure $\nu$ on $J$ by
\begin{equation*}
\diff \nu(t) = \sqrt[3]{\det \begin{pmatrix} z(t) & z''(t) \end{pmatrix}} \diff t.
\end{equation*}
With a slight abuse of notation we denote by $\nu$ also its push-forward to $\Gamma$ via the affine arclength parametrization $z$. In particular, when $\Gamma$ is $\mathcal{C}^2$ the argument of the cubic root is well-defined everywhere in $J$ and the measure $\nu$ on $\Gamma$ is called \emph{affine arclength measure}. We extend the term to denote $\nu$ in the general case of arbitrary convex curves.

We define a second measure $\mu$ on $\Gamma$ following Oberlin. Oberlin's construction of the \emph{affine measures} $\{ \mu_{n,\alpha} \colon \alpha \geq 0 \}$ on $\R^n$ is analogous to that of the Hausdorff measures. The only difference is that in the former we use rectangular parallelepipeds in $\R^n$ to cover sets while in the latter we use balls. This change guarantees the affine invariance of $\mu_{n,\alpha}$, as well as it allows $\mu_{n,\alpha}$ to be sensitive to the curvature properties of the set on which $\mu_{n,\alpha}$ is evaluated. A general definition of $\mu_{n,\alpha}$ can be found in \cite{MR1960918}. Here, we restrict ourselves to the case $n=2$, $\alpha=2/3$ and we drop the subscripts from the notation of $\mu$.

\begin{definition} [Affine measure $\mu$ on $\R^2$]
	For every $\delta > 0$ and every subset $E \subseteq \R^2$ we define
	\begin{equation*}
	\mu^\delta(E) \coloneqq \inf \Big\{ \sum_{R \in \mathcal{R}'} \abs{R}^{\frac{1}{3}} \colon \mathcal{R}' \subseteq \mathcal{R}^\delta, E \subseteq \bigcup_{R \in \mathcal{R}'} R \Big\},
	\end{equation*}
	where $\abs{R}$ is the Lebesgue measure of the rectangle $R$ and $\mathcal{R}^\delta$ is the collection of all rectangles in $\R^2$ with diameter smaller than or equal to $\delta$.	Next, we define
	\begin{equation*}
	\mu^\ast(E) \coloneqq \lim_{\delta \to 0} \mu^\delta (E).
	\end{equation*}
	
	Finally, we define the \emph{affine measure $\mu$} on $\R^2$ to be the restriction of the outer measure $\mu^\ast$ on $\R^2$ to its Carath\'{e}odory measurable subsets of $\R^2$.
\end{definition}
With a slight abuse of notation we denote by $\mu$ also its restriction to the convex curve $\Gamma$, as well as its push-forward to $J$ via the inverse of the bijective function given by an arclength parametrization $z$ for $\Gamma$. 

In \cite{MR1960918} Oberlin proved that if the curve $\Gamma$ is $\mathcal{C}^2$, then the affine measure $\mu$ and the affine arclength measure $\nu$ are comparable up to multiplicative constants uniform in the curve. The first observation of this paper is the extension of this property to the case of arbitrary convex curves.
\begin{theorem} \label{thm:comparability}
	There exist constants $0 < A \leq B < \infty$ such that for every convex curve $\Gamma$ we have
	\begin{equation*}
	A \nu \leq \mu \leq B \nu,
	\end{equation*}
	where $\mu, \nu$ are the measures on $\Gamma$ defined above.
\end{theorem}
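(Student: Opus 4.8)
The plan is to reduce the comparison $A\nu \le \mu \le B\nu$ to a local, scale-invariant statement on small arcs of $\Gamma$, and to exploit the affine invariance of both measures together with Oberlin's $\mathcal{C}^2$ result. First I would record that both $\nu$ and $\mu$ are affine-invariant in the following sense: for a linear map $T \in \mathrm{GL}(2,\R)$ and a translation, $\nu(T\Gamma) = \abs{\det T}^{1/3}\nu(\Gamma)$ and $\mu(T\Gamma) = \abs{\det T}^{1/3}\mu(\Gamma)$; for $\mu$ this is immediate from the definition since $\abs{TR} = \abs{\det T}\abs{R}$, and for $\nu$ it is the classical computation with the affine arclength density. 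Both measures are also monotone and countably (sub)additive on Borel subsets of $\Gamma$, so it suffices to prove the two inequalities on an arbitrary small sub-arc $\gamma$ of $\Gamma$, and then sum. Since $\Gamma$ is convex, away from at most countably many "corner" points each sub-arc is a graph of a convex function with controlled geometry, and in any case $\nu$ assigns zero mass to the line segments and corners where $z''$ fails to exist or vanishes, so no mass is lost by restricting attention to the "curved" part.

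The heart of the matter is a uniform two-sided estimate on a single small arc. The natural device is the John ellipse / bounding parallelogram: given a small arc $\gamma$, let $P$ be a rectangle (or parallelogram, then affinely normalized to a square) of minimal area containing $\gamma$ after subtracting the affine-linear function interpolating the endpoints. Applying an affine map $T$ that sends this bounding parallelogram to the unit square, we reduce to arcs $\gamma' = T\gamma$ that are "essentially flat at unit scale" — trapped in a unit square, touching both the top and bottom (or deviating by a definite fraction). For such normalized arcs one shows directly that $\mu(\gamma') \sim 1 \sim \nu(\gamma')$ with absolute constants: the lower bound $\mu(\gamma') \gtrsim 1$ comes from the fact that any covering of $\gamma'$ by rectangles of small diameter must have $\sum \abs{R}^{1/3}$ bounded below (a rectangle of diameter $\delta$ and area $A$ covers an arc of length $\lesssim \delta$ that deviates by $\lesssim A/\delta$, and a pigeonhole/area-deviation trade-off gives $\sum \abs{R}^{1/3} \gtrsim 1$); the upper bound $\mu(\gamma')\lesssim 1$ comes from an explicit efficient covering exploiting convexity (dyadically split according to how the tangent direction turns, using rectangles whose area is the product of arclength-piece times deviation, as in Oberlin). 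The estimate $\nu(\gamma') \sim 1$ for normalized arcs is a direct consequence of convexity plus the normalization, using that $\int \sqrt[3]{\kappa}\, ds$ is comparable to the affine arclength of a convex arc trapped in a unit square and turning by a definite amount; here one can invoke Oberlin's $\mathcal{C}^2$ comparability after a smoothing approximation, or argue directly.

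Then I would assemble the pieces: cover the curved part of $\Gamma$ by countably many arcs $\gamma_i$ on each of which the above normalization applies with a uniform constant, apply the affine-invariance identities to transfer $\mu(\gamma_i)\sim\nu(\gamma_i)$ back from the normalized picture with the same absolute constants (the Jacobian factors $\abs{\det T_i}^{1/3}$ appear identically on both sides and cancel), and sum. Summability is where convexity is essential and is the step I expect to be the main obstacle: one must choose the decomposition $\{\gamma_i\}$ so that the normalized arcs genuinely have comparable $\mu$- and $\nu$-mass \emph{with the same constant} — i.e. one cannot just cover, one needs an essentially disjoint decomposition into arcs each turning by a fixed small angle, and must check that the bounding-parallelogram construction respects this and that the countable sum reconstructs $\mu(\Gamma)$ and $\nu(\Gamma)$ exactly (using inner regularity / Carathéodory measurability of $\mu$ on the one hand, and absolute continuity of $\nu$ with respect to arclength on the other). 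A secondary technical point is handling the degenerate cases — line segments (where both measures vanish), arcs with a single corner, and unbounded convex curves (parabola-like ends) — which are dealt with by monotone exhaustion. Once the uniform single-arc comparison and the summation are in place, the constants $A,B$ are simply the product of the absolute constants from the normalized estimate, independent of $\Gamma$.
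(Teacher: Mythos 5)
Your approach is genuinely different from the paper's, but as sketched it has gaps at the two steps that carry the actual content.

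First, the claim that the affine invariance of $\mu$ is ``immediate from the definition'' is not correct: a general linear map $T$ sends a rectangle to a parallelogram, not to a rectangle, and the bounding rectangle of a parallelogram can have much larger area than the parallelogram itself (take a sheared unit square with vertices $(0,0),(1,0),(N,1),(N+1,1)$). So $|TR| = |\det T|\,|R|$ does not transfer to the covering collection $\mathcal{R}^\delta$ without an argument. Oberlin proves affine invariance, but it is a theorem, not a one-line observation, and your proposal leans on it heavily for the normalization step.

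Second, and more seriously, the lower bound $\mu(\gamma') \gtrsim 1$ for a normalized arc --- which is exactly the content of $A\nu \le \mu$ --- is left as a vague ``pigeonhole/area-deviation trade-off.'' This is the crux of the theorem, and the asserted per-rectangle trade-off (``a rectangle of diameter $\delta$ and area $A$ covers an arc that deviates by $\lesssim A/\delta$'') must be converted into a lower bound on $\sum|R|^{1/3}$ over an \emph{arbitrary} covering; the difficulty is that individual rectangles can be extremely eccentric and can overlap in complicated ways. The paper closes this gap with a specific device you do not have: for a rectangle $R$ meeting the curve in $z([c,d])$, map $\Delta([c,d])\ni(s,t)\mapsto z(s)+z(t)\in R+R$, use $|R+R|=4|R|$ to get
\[
4|R|\;\ge\;\int_{\Delta([c,d])}\det\!\begin{pmatrix} z'(t)&z'(s)\end{pmatrix}\diff s\diff t\;\gtrsim\;\int_{\Delta([c,d])}\!\!\int_t^s\kappa(u)\diff u\diff s\diff t,
\]
and then H\"older with exponents $(3/2,3)$ against the weight $((d-u)(u-c))^{\pm1/3}$ to produce $\nu([c,d])\lesssim|R|^{1/3}$. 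This per-rectangle estimate then sums over any covering, which is exactly what $\mu(E)\ge A\nu(E)$ requires for an arbitrary Borel set $E$, not just for whole arcs. Your reduction ``it suffices to prove the inequalities on small sub-arcs and sum'' silently needs absolute continuity of $\mu$ with respect to arclength, which the paper proves separately (its Lemma~3.1) and which is also the engine of the upper bound $\mu\le B\nu$ via Lebesgue differentiation of the density $\mu'$ against $\sigma'=\kappa$.

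Finally, ``invoke Oberlin's $\mathcal{C}^2$ comparability after smoothing'' for the normalized estimate $\nu(\gamma')\sim 1$ is delicate here: $\mu$ is defined by an outer-measure limit and is not obviously continuous under $\mathcal{C}^2$ approximation of the curve, and the whole point of the theorem is to go beyond $\mathcal{C}^2$. In short: the high-level skeleton (absolute continuity $\Rightarrow$ pointwise density comparison $\Rightarrow$ local estimate) is sound and not far from what the paper does implicitly, but the two local estimates that do the work --- the lower bound on $\sum|R|^{1/3}$ and the upper bound on $\mu$ of a short arc --- are asserted rather than proved, and the affine-normalization layer adds a nontrivial lemma (Oberlin's affine invariance of $\mu$) without removing any difficulty.
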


The second observation of this paper is the uniform extension of the boundedness properties of the maximal Fourier restriction operator defined in \eqref{eq:maxopr_2} to the case of arbitrary convex curves.
\begin{theorem} \label{thm:uniform2}
Let $1 \leq p < 4/3$, $q = p'/3$. There exists a constant $C = C(p) < \infty$ such that for every function $g \in L^{\infty}(\R^2)$ normalized in $L^{\infty}(\R^2)$, every convex curve $\Gamma$, and every Schwartz function $f \in \mathcal{S}(\R^2)$ we have
\begin{equation*}
	\norm{\mathcal{M}_g \widehat{f}}_{L^q(\Gamma,\nu)} \leq C \norm{f}_{L^p(\R^2)},
\end{equation*}
where $\nu$ is the measure on $\Gamma$ defined above.
\end{theorem}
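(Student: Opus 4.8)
The plan is to follow the argument of Ramos \cite{MR4055940} for the circle, inserting the modifications needed for an arbitrary convex curve and appealing to Theorem \ref{thm:comparability} to keep every constant uniform. Since $A\nu\le\mu\le B\nu$ with $A,B$ absolute, $\norm{\cdot}_{L^q(\Gamma,\nu)}$ and $\norm{\cdot}_{L^q(\Gamma,\mu)}$ are comparable with constants depending only on $q$, so it suffices to bound $\norm{\mathcal{M}_g\widehat f}_{L^q(\Gamma,\mu)}$. I would first localize: using that the Gauss map of a convex curve is monotone, partitioning its image in $S^1$ into the four quadrant arcs exhibits $\Gamma$ as a union of at most four arcs, each of which --- after a permutation and reflections of the coordinates --- is the graph $\{(t,\phi(t)):t\in I\}$ of a convex function $\phi$ with $\abs{\phi'}\le 1$ almost everywhere on an interval $I$. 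Line segments of $\Gamma$ and its (at most countably many) corners carry no $\nu$-mass and are discarded, so $\phi$ may be taken strictly convex. On such an arc the slope bound makes the minimal axis-parallel bounding box of a cap comparable in area to its minimal circumscribing rectangle, so the axis-parallel rectangles over which the supremum in \eqref{eq:maxopr_2} runs are, for $x$ on the arc, equivalent to the tilted rectangles naturally adapted to the curve.

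Next I would build the affine cap decomposition uniformly in $\Gamma$. Fixing a small absolute constant $\varepsilon_0$, I partition $I$ into consecutive intervals $I_j$ so that the arcs $\tau_j$ above $I_j$ satisfy $\nu(\tau_j)\approx\varepsilon_0$; by Theorem \ref{thm:comparability} this is the same, up to absolute constants, as $\mu(\tau_j)\approx\varepsilon_0$, i.e.\ $\abs{R_j}^{1/3}\approx\varepsilon_0$ for the circumscribing rectangle $R_j$ of $\tau_j$. Convexity forces each $\tau_j$, after the diagonal linear map normalizing $R_j$ to the unit square and a translation, to be a convex graph in the unit square of affine arclength $\approx 1$; and a convex arc in the unit square whose affine arclength is bounded below is uniformly curved --- trapped between two parabolic arcs of bounded eccentricity, with sub-arc affine-arclength densities bounded above and below --- which one proves from convexity together with the rectangle-covering description of $\mu$. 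These normalized caps play the role of the standard parabolic caps in \cite{MR4055940}, and the rectangle families $\{R_j\}$ inherit, with absolute constants, the nesting and bounded-overlap properties at each dyadic scale used there.

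With this in place I would rerun the argument of \cite{MR4055940}. One linearizes the supremum in \eqref{eq:maxopr_2} by a measurable selection $x\mapsto R(x)$ and a sign, decomposes dyadically according to the scale of $R(x)$ relative to the rectangle $R_j$ of the cap $\tau_j\ni x$, and uses the freedom in $g$ to pull the absolute value inside the $y$-integral, exactly as in \cite{MR4055940}. The contribution of each scale is then controlled by a single-cap extension estimate on the rescaled standard cap, combined with an almost-orthogonality (square-function) estimate summing over the caps and the scales; the Fourier-analytic input is the uniform decay, or Stein--Tomas-type, estimate for the affine arclength measure of a convex curve, which --- this is the point where the $\mathcal{C}^2$ hypothesis of \cite{MR3960255,MR4055940} is really discarded --- follows from convexity alone rather than from a curvature lower bound, so the argument does not presuppose the uniform restriction theorem for convex curves, but on the contrary yields it as a corollary. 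Interpolating the resulting estimate with the trivial endpoint $\norm{\mathcal{M}_g\widehat f}_{L^\infty(\Gamma,\nu)}\le\norm{\widehat f}_{L^\infty(\R^2)}\le\norm{f}_{L^1(\R^2)}$ gives the claimed bound for all $1\le p<\tfrac43$, with constant depending only on $p$.

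I expect the main obstacle to be precisely the combination of uniformity with the absence of regularity. For an arbitrary convex curve $\phi''$ is only a measure, possibly singular and with atoms at the corners, so the pointwise oscillatory-integral and stationary-phase ingredients of \cite{MR3960255,MR4055940} are no longer available as stated and the caps cannot be selected by a curvature threshold. The resolution is to route every geometric assertion through $\mu$ by Theorem \ref{thm:comparability}: since $\mu$ is built purely from rectangle coverings, on convex arcs it is comparable to areas of circumscribing rectangles, which makes the cap decomposition, the overlap counts, and the standard-cap normalization available with constants depending on the curve only through $\mu$, hence absolute after rescaling. The remaining, and genuinely new, technical work --- beyond transcribing the argument of \cite{MR4055940} --- is to re-derive the Fourier-analytic ingredients still needed, the single-scale estimates on the standard caps and the decay estimates for the affine arclength measure, using only convexity and the $\mu$-controlled parameters of the caps, so that after normalization they reduce to fixed estimates on a standard parabolic cap.
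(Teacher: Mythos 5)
Your route is genuinely different from the paper's, and as written it has a gap in its central geometric claim. The paper proves Theorem~\ref{thm:uniform2} by linearizing $\mathcal{M}_g$ by a measurable choice of rectangle $R(z(t))$, passing by duality to the adjoint $\mathcal{M}_{g,R}^\ast$, and then proving Lemma~\ref{thm:lemma2} by the Carleson--Sj\"olin squaring device: write $|Sf(\xi)|^2$ as an integral over $\Delta(I)$, change variables by $\Phi(s,t)=z(s)+z(t)$, interpolate between $p=1$ and the $p=2$ endpoint supplied by Ramos's Lemma~\ref{thm:lemma1}, and finally control $\norm{F}_{L^p}$ by reparametrizing each of the four sectors $\Delta_1,\dots,\Delta_4$ through the cumulative turning angle $\alpha(s)=\int_0^s\kappa(u)\,\diff u$, after which the Jacobian weight $|\sin(\theta(s)-\theta(t))|^{1-p}$ becomes $|\alpha-\beta|^{1-p}$ and Hardy--Littlewood--Sobolev applies. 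There is no cap decomposition, no almost-orthogonality, and no single-cap extension estimate anywhere in the argument; the turning-angle substitution handles arbitrary distributions of curvature (including degeneracies and singular parts of $\sigma$) automatically, which is exactly where the $\mathcal{C}^2$ hypothesis is discarded.

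The gap in your proposal is the assertion that a convex arc $\tau_j$ with $\mu(\tau_j)\approx\varepsilon_0$, after the linear map normalizing its circumscribing rectangle $R_j$ to the unit square, is ``trapped between two parabolic arcs of bounded eccentricity, with sub-arc affine-arclength densities bounded above and below.'' This is false in general. Take the arc $\{(x,x^N):0\le x\le 1\}$ for large $N$: its bounding rectangle is already (essentially) the unit square, its total affine arclength is of order one uniformly in $N$, yet its affine-arclength density $\propto x^{(N-2)/3}$ degenerates at $x=0$ and concentrates near $x=1$, and the arc is not comparable to any parabolic arc with constants independent of $N$. Theorem~\ref{thm:comparability} controls only the total $\mu$-mass of a cap, not the distribution of $\kappa$ within it, so the normalization you propose does not yield a ``standard cap.'' Consequently the single-cap extension estimates you want to import cannot be reduced after rescaling to a fixed estimate on a parabolic model; making them uniform over such degenerate caps is essentially the uniform restriction estimate for convex curves, which is what you are trying to prove, so the circularity you set out to avoid reappears. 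This is precisely the difficulty that the paper's turning-angle change of variables is designed to bypass.
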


We have two straight-forward corollaries. The first is a uniform Fourier restriction result for arbitrary convex curves. 
\begin{corollary} \label{thm:uniform}
	Let $1 \leq p < 4/3$, $q = p'/3$. There exists a constant $C = C(p) < \infty$ such that for every convex curve $\Gamma$ and every Schwartz function $f \in \mathcal{S}(\R^2)$ we have
	\begin{equation*}
	\norm{\widehat{f}}_{L^q(\Gamma,\nu)} \leq C	\norm{f}_{L^p(\R^2)},
	\end{equation*}
	where $\nu$ is the measure on $\Gamma$ defined above.
\end{corollary}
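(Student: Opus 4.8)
The plan is to obtain Corollary~\ref{thm:uniform} as an immediate specialization of Theorem~\ref{thm:uniform2}, by choosing the weight $g$ to be the constant function $1$ and exploiting the continuity of $\widehat{f}$ for Schwartz $f$.

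First I would take $g \equiv 1$, which is admissible in Theorem~\ref{thm:uniform2} since $\norm{1}_{L^\infty(\R^2)} = 1$. Unwinding the definition \eqref{eq:maxopr_2}, for every $x \in \R^2$ one has
\[
\mathcal{M}_1 \widehat{f}(x) = \sup_{R} \abs[\Big]{\int_{\R^2} \widehat{f}(x-y)\,\abs{R}^{-1} 1_R(y)\,\diff y},
\]
the supremum running over all axis-parallel rectangles $R$ centred at the origin, in particular over rectangles of arbitrarily small diameter. Since $f \in \mathcal{S}(\R^2)$, the function $\widehat{f}$ is continuous, hence the averages on the right-hand side converge to $\widehat{f}(x)$ as $\diam(R) \to 0$; consequently $\abs{\widehat{f}(x)} \leq \mathcal{M}_1 \widehat{f}(x)$ for every $x$, and a fortiori at $\nu$-almost every point of $\Gamma$.

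It then remains to take $L^q(\Gamma,\nu)$ norms in this pointwise bound and to apply Theorem~\ref{thm:uniform2} with $g = 1$, which yields
\[
\norm{\widehat{f}}_{L^q(\Gamma,\nu)} \leq \norm{\mathcal{M}_1 \widehat{f}}_{L^q(\Gamma,\nu)} \leq C(p)\,\norm{f}_{L^p(\R^2)},
\]
with the very same constant $C(p)$. I do not expect any genuine obstacle here: the entire analytic content of the corollary is carried by Theorem~\ref{thm:uniform2}, and the only thing to check is the elementary pointwise domination $\abs{\widehat{f}} \leq \mathcal{M}_1 \widehat{f}$, which is immediate from the continuity of $\widehat{f}$ on the shrinking averaging sets. (If one preferred, the same domination could be phrased through a Lebesgue-differentiation-type statement, but for Schwartz $f$ the continuity argument is cleaner and sidesteps any measurability issue.)
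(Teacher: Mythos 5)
Your proof is correct and takes essentially the same approach as the paper: specialize Theorem~\ref{thm:uniform2} to a suitable choice of $g$ with $\norm{g}_{L^\infty}=1$ and use the pointwise domination $\abs{\widehat{f}} \leq \mathcal{M}_g \widehat{f}$. The only minor difference is that the paper takes $g = \abs{\widehat{f}}/\widehat{f}$ (extended by $1$ where $\widehat{f}=0$), so that the integrand $\widehat{f}g = \abs{\widehat{f}}$ is nonnegative, whereas you take $g \equiv 1$ and invoke continuity of $\widehat{f}$ directly; for Schwartz $f$ both choices immediately give the required domination.
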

The second is the extension of the result on Lebesgue points of $\widehat{f}$ on the curve to the case of arbitrary convex curves.
\begin{corollary} \label{thm:lebsegue}
	Let $1 \leq p < 4/3$. Let $\Gamma$ be a convex curve and $\nu$ the measure on $\Gamma$ defined above. If $f \in L^p(\R^2)$, then $\nu$-almost every point of $\Gamma$ is a Lebesgue point for $\widehat{f}$. Moreover, the regularized value of $\widehat{f}$ coincides with the one of $\mathcal{R}f$ at $\nu$-almost every point of $\Gamma$.
\end{corollary}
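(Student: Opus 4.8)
The plan is to deduce Corollary~\ref{thm:lebsegue} from the maximal estimate of Theorem~\ref{thm:uniform2} by the standard density-plus-maximal-function scheme, in the form carried out by Ramos; once Theorem~\ref{thm:uniform2} is available there is essentially no serious obstacle left, and the only non-obvious ingredient is Ramos's device of allowing a nonconstant $g$ in order to pass an absolute value inside the averages. Fix $f \in L^p(\R^2)$ with $1 \le p < \frac43$. Since $p < 2$, Hausdorff--Young gives $\widehat f \in L^{p'}(\R^2) \subseteq L^1_{\mathrm{loc}}(\R^2)$, so for every $x$, every $\phi \in L^1_{\mathrm{loc}}(\R^2)$, and every rectangle $R$ centred at the origin with sides parallel to the axes the average $A_R\phi(x) \coloneqq |R|^{-1}\int_R \phi(x-y)\diff y$ is well defined; throughout, ``$\diam R \to 0$'' is understood along such rectangles, matching the supremum in \eqref{eq:maxopr_2}.

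\emph{Step 1: extension of the maximal estimate to $L^p$.} For $h \in L^p(\R^2)$ define $\mathcal M_g \widehat h$ by \eqref{eq:maxopr_2} verbatim, which makes sense since $\widehat h \in L^1_{\mathrm{loc}}$ and $g \in L^\infty$. If $h_n \to h$ in $L^p$ with $h_n \in \mathcal S(\R^2)$, then $\widehat{h_n} \to \widehat h$ in $L^{p'}$, so by H\"older's inequality
\begin{equation*}
\Bigl| |R|^{-1}\!\int_R \bigl(\widehat{h_n} - \widehat h\bigr)(x-y)\, g(x-y)\diff y \Bigr| \le |R|^{\frac1p - 1}\norm{g}_{L^\infty}\norm{\widehat{h_n} - \widehat h}_{L^{p'}} \longrightarrow 0
\end{equation*}
for each fixed $x$ and $R$; comparing $\mathcal M_g \widehat h(x)$ with a single fixed $R$ first and then taking the supremum gives $\mathcal M_g \widehat h(x) \le \liminf_n \mathcal M_g \widehat{h_n}(x)$. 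Fatou's lemma and Theorem~\ref{thm:uniform2} then yield $\norm{\mathcal M_g\widehat h}_{L^q(\Gamma,\nu)} \le C\norm{h}_{L^p(\R^2)}$, with $C$ uniform in $g$ (with $\norm g_{L^\infty}=1$) and in $\Gamma$.

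\emph{Step 2: existence of the regularised value and the Lebesgue-point property.} I would show that for $\nu$-a.e.\ $x \in \Gamma$ the limit $\widetilde f(x) \coloneqq \lim_{\diam R \to 0} A_R \widehat f(x)$ exists and satisfies $A_R\bigl(|\widehat f - \widetilde f(x)|\bigr)(x) \to 0$. Fix $\varepsilon > 0$ and split $f = f_1 + f_2$ with $f_1 \in \mathcal S(\R^2)$ and $\norm{f_2}_{L^p} < \varepsilon$; as $\widehat{f_1}$ is continuous, $A_R\widehat{f_1}(x) \to \widehat{f_1}(x)$ and $A_R(|\widehat{f_1} - \widehat{f_1}(x)|)(x) \to 0$ for every $x$. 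Writing $\mathcal M_1$ for \eqref{eq:maxopr_2} with $g\equiv 1$, so that $\mathcal M_1\widehat{f_2}(x)=\sup_R|A_R\widehat{f_2}(x)|$, one gets for the oscillation of $\widehat f$ that $\limsup_{\diam R,\,\diam R' \to 0} |A_R \widehat f(x) - A_{R'}\widehat f(x)| \le 2\,\mathcal M_1 \widehat{f_2}(x)$, a function of $L^q(\nu)$ norm $\le 2C\varepsilon$ by Step~1; since the left-hand side is independent of the splitting, letting $\varepsilon\to 0$ shows it vanishes $\nu$-a.e., so $\widetilde f$ exists $\nu$-a.e.\ and equals $\widehat{f_1} + \widetilde{f_2}$ there. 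For the Lebesgue-point property one bounds
\begin{equation*}
A_R\bigl(|\widehat f - \widetilde f(x)|\bigr)(x) \le A_R\bigl(|\widehat{f_1} - \widehat{f_1}(x)|\bigr)(x) + A_R\bigl(|\widehat{f_2}|\bigr)(x) + |\widetilde{f_2}(x)| ,
\end{equation*}
where the first summand tends to $0$; choosing the fixed function $g \coloneqq \overline{\operatorname{sgn}(\widehat{f_2})}$, which has $\norm g_{L^\infty} \le 1$, we have $A_R(|\widehat{f_2}|)(x) = \bigl|\,|R|^{-1}\!\int_R \widehat{f_2}(x-y)\, g(x-y)\diff y\,\bigr| \le \mathcal M_g \widehat{f_2}(x)$ and $|\widetilde{f_2}(x)| \le \mathcal M_1 \widehat{f_2}(x)$, hence $\limsup_{\diam R \to 0} A_R(|\widehat f - \widetilde f(x)|)(x) \le \mathcal M_g \widehat{f_2}(x) + \mathcal M_1 \widehat{f_2}(x)$, again of $L^q(\nu)$ norm $\le 2C\varepsilon$. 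Letting $\varepsilon \to 0$ finishes the step; this is the only point where a nonconstant $g$ is genuinely needed.

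\emph{Step 3: identification with $\mathcal R f$.} By Corollary~\ref{thm:uniform}, $\mathcal R f \in L^q(\Gamma,\nu)$ is the $L^q(\nu)$-limit of $\widehat{f_n}|_\Gamma$ for any Schwartz $f_n \to f$ in $L^p$, and for such $f_n$ continuity of $\widehat{f_n}$ gives $\widetilde{f_n} = \widehat{f_n}|_\Gamma = \mathcal R f_n$ on $\Gamma$. From $|\widetilde f(x) - \widetilde{f_n}(x)| = \bigl|\lim_{\diam R\to 0} A_R \widehat{f - f_n}(x)\bigr| \le \mathcal M_1 \widehat{f - f_n}(x)$ and Step~1, $\norm{\widetilde f - \mathcal R f_n}_{L^q(\nu)} \le C\norm{f - f_n}_{L^p} \to 0$, while $\norm{\mathcal R f - \mathcal R f_n}_{L^q(\nu)} \to 0$ by Corollary~\ref{thm:uniform}; hence $\widetilde f = \mathcal R f$ $\nu$-a.e.\ on $\Gamma$, which is the second assertion. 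The remaining details — measurability of $\operatorname{sgn}(\widehat{f_2})$ and the interchange of $\sup_R$ with pointwise limits in Step~1 — are routine.
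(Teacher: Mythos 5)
Your proof is correct and takes exactly the approach the paper has in mind: the paper's own proof of this corollary is a two-sentence sketch pointing to Theorem~\ref{thm:uniform2} and ``a standard approximation argument,'' and your three steps (extending the maximal estimate to $L^p$ via Fatou, the density-splitting oscillation argument with the nonconstant $g = \overline{\operatorname{sgn}(\widehat{f_2})}$ device for the Lebesgue-point property, and the $L^q(\nu)$-limit identification with $\mathcal Rf$) are precisely the details being elided. Nothing more is needed.
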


The results stated in Theorem~\ref{thm:uniform2} and the corollaries highlight a strict relation between the following objects. On one hand, the affine arclength and Oberlin's affine measures, sensitive to the curvature properties of the sets on which they are defined. On the other hand, uniform estimates for classical operators involving smooth enough submanifolds in $\R^n$, where the curvature properties of the submanifold play a significant role. Beyond Fourier restriction operators, it is the case of convolution operators, X-ray transforms, and Radon transforms. We conclude the Introduction briefly mentioning previous works pointing at the aforementioned relation in the analysis of all these operators \cite{MR3103222, MR2460861, MR2503984, MR3181503, MR2916059, MR3292348, MR1049762, MR764500, jesurum2022fourier, MR1690999, MR1845006, MR3265961, MR3483472}. We refer to these papers and the references therein for a more thorough exposition of the relation. Finally, we point out the work of Gressman in \cite{MR3992033} on the generalization of the affine arclength measure to smooth enough submanifolds of any arbitrary dimension $d$ in $\R^n$.

\subsection*{Guide to the paper}

In Section~\ref{sec:preliminaries} we introduce some notations, definitions, and previous results we clarify in Appendix~\ref{sec:appendix}.
In Section~\ref{sec:affine_measure} we prove Theorem~\ref{thm:comparability}.
In Section~\ref{sec:proofs} we prove Theorem~\ref{thm:uniform2} and the corollaries.

\section*{Acknowledgements}
The author gratefully acknowledges financial support by the CRC 1060 \emph{The Mathematics of Emergent Effects} at the University of Bonn, funded through the Deutsche Forschungsgemeinschaft. He is also supported by the Basque Government through the BERC 2022-2025 program and by the Ministry of Science and Innovation: BCAM Severo Ochoa accreditation CEX2021-001142-S / MICIN / AEI / 10.13039/501100011033.

The author is thankful to Jo\~{a}o Pedro G. Ramos, Christoph Thiele and Gennady Uraltsev for helpful discussion, comments and suggestions that improved the exposition of the material, and for their support.

\section{Preliminaries} \label{sec:preliminaries}

\subsection{Notation} We introduce the following notations.
	
	For every interval $I \subseteq \R$ we denote by $\Delta(I)$ the lower triangle associated with $I$ defined by
	\begin{equation*}
	\Delta(I) \coloneqq \{(s,t)\in I\times I: t<s\} .
	\end{equation*}
	
	For all vectors $a,b \in \R^2 \setminus \{ (0,0) \}$ we denote by $\theta (a,b) \in [0,2 \pi)$ the counterclockwise angle from $a$ to $b$.

\subsection{Real analysis} 

We recall a result about the metric density associated with the absolutely continuous part of a measure with respect to the Lebesgue measure.

\begin{definition} \label{def:nicely}
	Let $x \in \R^n$. We say that a sequence $\{ E_\varepsilon \colon \varepsilon > 0 \}$ \emph{shrinks to $x$ nicely} if it is a sequence of Borel sets in $\R^n$ and there is a number $\alpha >0$ satisfying the following property. There is a sequence of balls $\{ B(x,r_\varepsilon) \colon \varepsilon > 0 \}$ with $\lim_{\varepsilon \to 0} r_\varepsilon =0$, such that for every $\varepsilon > 0$ we have $E_\varepsilon \subseteq B(x,r_\varepsilon)$ and
	\begin{equation*}
	m(E_\varepsilon(x)) \geq \alpha m(B(x,r_\varepsilon)).
	\end{equation*}
\end{definition}

\begin{theorem} [Rudin \cite{MR924157}, Theorem~7.14] \label{thm:derivative}
	For every $x \in \R^n$ let $\{ E_\varepsilon(x) \colon \varepsilon > 0 \}$ be a sequence that shrinks to $x$ nicely. Let $\mu$ be a Borel measure on $\R^n$. Let
	\begin{equation*}
	\diff \mu = \mu' \diff m + \diff \mu_s,
	\end{equation*}
	be the decomposition of $\mu$ into the absolutely continuous and singular parts with respect to the Lebesgue measure $m$ in $\R^n$. Then, for $m$-almost every $x \in \R^n$ we have
	\begin{equation*} 
	\lim_{\varepsilon \to 0} \frac{\mu(E_\varepsilon(x))}{m(E_\varepsilon(x))}= \mu'(x).
	\end{equation*}
\end{theorem}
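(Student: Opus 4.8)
The plan is to handle the Lebesgue decomposition $\diff\mu=\mu'\,\diff m+\diff\mu_s$ termwise: show that the absolutely continuous part $\mu_{\ac}$, with $\diff\mu_{\ac}=\mu'\,\diff m$, already produces the limit $\mu'(x)$, and that the singular part $\mu_s$ contributes nothing in the limit, both for $m$-almost every $x$. The nicely-shrinking hypothesis will be used only through $E_\varepsilon(x)\subseteq B(x,r_\varepsilon)$, $r_\varepsilon\to 0$, and $m(E_\varepsilon(x))\ge\alpha\,m(B(x,r_\varepsilon))$, which lets one replace averages over the sets $E_\varepsilon(x)$ by averages over the balls $B(x,r_\varepsilon)$ at the cost of a factor $1/\alpha$.

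For the absolutely continuous part I would invoke (or first establish, via the Hardy--Littlewood weak $(1,1)$ maximal inequality) the Lebesgue differentiation theorem: since $\mu'\in L^1_{\mathrm{loc}}(m)$, for $m$-almost every $x$ one has $\lim_{r\to 0}m(B(x,r))^{-1}\int_{B(x,r)}\abs{\mu'(y)-\mu'(x)}\,\diff m(y)=0$, i.e.\ $x$ is a Lebesgue point of $\mu'$. At such an $x$, the quantity $\mu_{\ac}(E_\varepsilon(x))/m(E_\varepsilon(x))-\mu'(x)$ has absolute value at most $m(E_\varepsilon(x))^{-1}\int_{E_\varepsilon(x)}\abs{\mu'(y)-\mu'(x)}\,\diff m(y)$, which by $E_\varepsilon(x)\subseteq B(x,r_\varepsilon)$ and $m(E_\varepsilon(x))\ge\alpha m(B(x,r_\varepsilon))$ is at most $\bigl(\alpha\,m(B(x,r_\varepsilon))\bigr)^{-1}\int_{B(x,r_\varepsilon)}\abs{\mu'(y)-\mu'(x)}\,\diff m(y)\to 0$ as $\varepsilon\to 0$.

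For the singular part the key assertion is that the upper derivative $\overline{D}\mu_s(x):=\limsup_{r\to 0}\mu_s(B(x,r))/m(B(x,r))$ vanishes for $m$-almost every $x$; granting this, $0\le \mu_s(E_\varepsilon(x))/m(E_\varepsilon(x))\le\alpha^{-1}\mu_s(B(x,r_\varepsilon))/m(B(x,r_\varepsilon))$ forces $\limsup_{\varepsilon\to 0}\mu_s(E_\varepsilon(x))/m(E_\varepsilon(x))=0$ at such $x$, and adding the two contributions finishes the proof. To prove the assertion, fix $\lambda>0$ and set $A_\lambda=\{x:\overline{D}\mu_s(x)>\lambda\}$; it suffices to prove $m(A_\lambda)=0$ (then take the union over $\lambda=1/n$, $n\in\N$). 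Since $\mu_s\perp m$, fix a Borel set $S$ carrying $\mu_s$ with $m(S)=0$; for $\epsilon>0$, inner regularity of $\mu_s$ gives a compact $F\subseteq S$ with $\mu_s(S\setminus F)<\epsilon$, and one writes $\mu_s=\mu_s|_F+\nu$ with $\nu(\R^n)<\epsilon$. If $x\notin F$ then $B(x,r)\cap F=\emptyset$ for all small $r$, so $\overline D(\mu_s|_F)(x)=0$; as $m(F)=0$ this holds $m$-a.e., hence $\overline D\mu_s\le\overline D\nu$ $m$-a.e.\ by subadditivity of $\limsup$. Finally the weak $(1,1)$ inequality for the Hardy--Littlewood maximal function of a measure --- proved by the Vitali covering lemma --- gives $m(\{\overline D\nu>\lambda\})\le C_n\nu(\R^n)/\lambda<C_n\epsilon/\lambda$, so $m(A_\lambda)\le C_n\epsilon/\lambda$; letting $\epsilon\to 0$ gives $m(A_\lambda)=0$.

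The crux is this last assertion, the $m$-a.e.\ vanishing of $\overline D\mu_s$: the Vitali covering lemma (equivalently, the weak $(1,1)$ maximal bound it powers) is indispensable, and so is the approximation of $\mu_s$ in total mass by the piece $\mu_s|_F$ living on a \emph{compact} $m$-null set, for which the upper derivative vanishes off an $m$-null set by a triviality --- applying the maximal inequality directly to $\mu_s$ would only bound $m(A_\lambda)$ by $C_n\mu_s(\R^n)/\lambda$, a fixed positive quantity. If $\mu$ is merely $\sigma$-finite rather than finite, one first restricts all measures to a large ball, runs the argument there, and afterwards exhausts $\R^n$.
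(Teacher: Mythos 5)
The paper cites this as Theorem~7.14 in Rudin's \emph{Real and Complex Analysis} without reproducing a proof, so the only fair comparison is with the textbook argument; your proof is correct and is essentially Rudin's own: split $\mu$ via the Lebesgue decomposition, handle the absolutely continuous part through Lebesgue points (transferring from balls to the nicely shrinking sets via the $\alpha$-factor), and kill the singular part by showing $\overline D\mu_s=0$ $m$-a.e.\ using inner regularity to reduce to a compact $m$-null carrier plus a piece of small mass, to which the Vitali/weak-$(1,1)$ maximal bound applies. No gaps; the closing remark on localizing to balls to pass from finite to $\sigma$-finite $\mu$ is the right way to reconcile the paper's slightly looser hypothesis with Rudin's formulation for complex (hence finite) Borel measures.
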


\subsection{Convex curves}

We introduce some auxiliary notations and definitions, and we recall some observations and properties for convex curves in the plane. They guarantee a formalization of the definition of the affine arclength measure $\nu$ we gave in the Introduction. These properties are standard, but we were not able to find any clear reference for them. Therefore, for the sake of completeness we include the required proofs in Appendix~\ref{sec:appendix}.

\begin{definition}
	A set $K \subseteq \R^n$ is \emph{convex} if for all $x,y \in K$, $0 \leq \lambda \leq 1$ we have
	\begin{equation*}
	\lambda x + (1-\lambda) y \in K.
	\end{equation*}
	A \emph{convex curve} $\Gamma \subseteq \R^2$ is the boundary $\partial K$ of a non-empty open convex set $K \subseteq \R^2$.
\end{definition}
From now on, we restrict ourselves to \emph{compact convex curves}. We extend the definitions and results to every non-compact convex curve $\Gamma$ considering the sequence of compact convex curves 
\begin{equation*}
\{ \Gamma_N \coloneqq \partial ( K \cap [-N,N]^2 ) \colon N \in \N \} .
\end{equation*}
\begin{theorem} \label{thm:rectifiable}
	Every compact convex curve $\Gamma$ is rectifiable. 
\end{theorem}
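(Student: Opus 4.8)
The plan is to show that $\Gamma$ has finite length, and then to upgrade "finite length" to the existence of an arclength parametrization on a compact interval, which is what the definition of $\nu$ in the Introduction requires. Write $\Gamma = \partial K$ with $K \subseteq \R^2$ a non-empty bounded open convex set. Since $K$ is bounded, fix a closed square $Q$ with $\overline{K} \subseteq \operatorname{int}(Q)$; its boundary $\partial Q$ is a piecewise linear Jordan curve of finite length $L_0 = \operatorname{length}(\partial Q) < \infty$. The key tool is the metric (nearest-point) projection $\pi \colon \R^2 \to \overline{K}$, which is well defined because $\overline{K}$ is a non-empty closed convex set, and which is $1$-Lipschitz by the standard variational characterization of projections onto convex sets.

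The first step is to check that $\pi(\partial Q) = \Gamma$. For the inclusion $\pi(\partial Q) \subseteq \Gamma$: if $x \in \partial Q$ then $x \notin \overline{K}$ (as $\overline{K} \subseteq \operatorname{int}(Q)$), and the nearest point in a closed convex set to an exterior point always lies on the boundary, so $\pi(x) \in \partial K = \Gamma$. For surjectivity: given $p \in \Gamma$, since $K$ is convex with $p \in \partial K$ there is a supporting line of $K$ at $p$ with outward unit normal $n$; the ray $\{ p + t n : t \geq 0 \}$ starts in $\operatorname{int}(Q)$ and leaves $Q$, hence meets $\partial Q$ at some point $x$, and because the segment from $x$ to $p$ is orthogonal to a supporting hyperplane of $\overline{K}$ at $p$, one has $\pi(x) = p$. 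Thus $\pi$ restricted to $\partial Q$ surjects onto $\Gamma$.

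The second step bounds the length of $\Gamma$. Parametrize $\partial Q$ by arclength, $\gamma \colon [0,L_0] \to \partial Q$, and set $\sigma \coloneqq \pi \circ \gamma$, a closed curve with image $\Gamma$. For any partition $0 = t_0 < t_1 < \dots < t_N = L_0$, the $1$-Lipschitz property of $\pi$ gives $\sum_{i=1}^{N} \abs{\sigma(t_i) - \sigma(t_{i-1})} \leq \sum_{i=1}^{N} \abs{\gamma(t_i) - \gamma(t_{i-1})} \leq L_0$, so taking the supremum yields $\operatorname{length}(\sigma) \leq L_0$. In particular $\mathcal{H}^1(\Gamma) \leq \operatorname{length}(\sigma) \leq L_0 < \infty$, so $\Gamma$ has finite length. (Alternatively one can run this through the monotonicity of the perimeter for nested convex bodies, applied to polygons inscribed in $\Gamma$, which are convex and contained in $Q$; this is the same argument in disguise.) Finally, since $K$ is a bounded convex body, $\Gamma$ is a Jordan curve, and a rectifiable Jordan curve admits an injective parametrization by arclength over a compact interval $J$; this furnishes the map $z \colon J \to \Gamma$ used to define $\nu$.

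I expect the main obstacle to be the surjectivity half of the first step: it requires invoking the existence of a supporting line at every boundary point of a convex set and then verifying that the metric projection sends the boundary point of $Q$ found along the outward normal ray back to $p$. The remaining ingredients — the $1$-Lipschitz property of the metric projection, the fact that lengths do not increase under $1$-Lipschitz maps, and the passage from finite length to an arclength parametrization of a Jordan curve — are standard.
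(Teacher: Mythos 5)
Your argument is correct, but it takes a genuinely different route from the paper. You bound the length of $\Gamma$ by composing an arclength parametrization of the boundary of an enclosing square $Q$ with the metric projection $\pi \colon \R^2 \to \overline{K}$, using that $\pi$ is $1$-Lipschitz (so lengths do not increase) together with the surjectivity $\pi(\partial Q) = \Gamma$ (which you justify via supporting lines). The paper instead works entirely with inscribed polygons: it first constructs an explicit continuous bijective parametrization $\gamma = \tau \circ \Theta$ of $\Gamma$ by intersecting rays from an interior point $x_0$ with $\Gamma$ (Lemmas~\ref{thm:convexcombination}--\ref{thm:intersection_2} and the continuity lemma), shows that for any finite partition $P$ the polygonal curve $\sigma_{\gamma(P)}$ is the boundary of the convex hull $\ch(\gamma(P))$ (Lemma~\ref{thm:boundary_polygon}), and finally uses the monotonicity of perimeter for nested convex polygons (Lemma~\ref{thm:polygons}) with an enclosing triangle $\Delta$ to get $\ell(\sigma_{\gamma(P)}) \leq \ell(\partial\Delta)$ uniformly in $P$. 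Your approach is shorter and slicker, but it black-boxes three facts the paper develops or avoids: the existence of a supporting line at every boundary point of a convex set, the $1$-Lipschitz property of the metric projection, and the assertion that $\Gamma$ is a Jordan curve (which the paper proves by constructing $\gamma$). The paper's approach is longer but self-contained, and moreover the parametrization $\gamma$ it builds is reused throughout the rest of Appendix~\ref{sec:appendix} (e.g., in Remark~\ref{rmk:increasing_composition} and the proof of Theorem~\ref{thm:derivative_z}), so it earns its keep beyond the rectifiability statement. Note also that the paper's arclength domain is the half-open interval $J = [0,\ell(\Gamma))$ rather than a compact one, since $\Gamma$ is closed; this is a cosmetic difference from your last sentence.
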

Therefore, a compact convex curve $\Gamma$ admits an arclength parametrization 
\begin{equation*}
z \colon J = [0, \ell(\Gamma)) \to \Gamma \subseteq \R^2,
\end{equation*} 
where $\ell(\Gamma)$ is the length of the curve $\Gamma$. Without loss of generality, we assume the parametrization to be counterclockwise. Moreover, we have an almost identical arclength parametrization defined by
\begin{gather*}
\widetilde{z} \colon \widetilde{J} = (0, \ell(\Gamma)] \to \Gamma \subseteq \R^2, \\
\widetilde{z}(\ell(\Gamma)) \coloneqq  z(0), \qquad \qquad \forall t \in (0, \ell(\Gamma)), \widetilde{z}(t) \coloneqq  z(t).
\end{gather*}
With a slight abuse of notation, we denote both of the arclength parametrizations by $z$. The identification is harmless and involves a single point. At any time it will be made clear by the context which one is the appropriate choice of the arclength parametrization we are considering. A first instance of the feature just described appears in the following statement about the existence of well-defined left and right derivatives of the function $z$. Strictly speaking, we should define the left derivative $\widetilde{z}'_l$ of $\widetilde{z}$ on $\widetilde{J}$, and the right derivative $z'_r$ of $z$ on $J$.
\begin{theorem} \label{thm:derivative_z}
	The left and right derivatives $z'_l$ and $z'_r$ of $z$ with respect to the Lebesgue measure $m$ on $J$ are well-defined functions from $J$ to $\mathbb{S}^1$, and they coincide $m$-almost everywhere.
\end{theorem}
In fact, the functions $z'_l$ and $z'_r$ admit well-defined derivatives $m$-almost everywhere.
\begin{theorem} \label{thm:derivative_z'}
	The derivatives $z''_l$ and $z''_r$ of $z'_l$ and $z'_r$ with respect to the Lebesgue measure $m$ on $J$ are well-defined $m$-almost everywhere. They are functions from $J$ to $\R^2$ and coincide $m$-almost everywhere.
\end{theorem}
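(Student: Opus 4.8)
The plan is to leverage the convexity of $\Gamma$ to reduce the statement to a classical differentiation theorem. By Theorem~\ref{thm:derivative_z}, the right derivative $z'_r \colon J \to \mathbb{S}^1$ is well-defined everywhere, and since $\Gamma$ is parametrized counterclockwise, the map $t \mapsto z'_r(t)$ traces the unit circle monotonically: writing $z'_r(t) = (\cos \varphi(t), \sin \varphi(t))$ for a suitable continuous-from-the-right choice of the angle function $\varphi \colon J \to \R$, convexity of the enclosed set $K$ forces $\varphi$ to be non-decreasing. (This is the analytic content of "the tangent line turns in one direction" for a convex curve; it is here that compactness and the counterclockwise convention enter.) The same applies to the left derivative $z'_l$ with a left-continuous angle function, and the two angle functions agree except on the at most countable set where $\varphi$ jumps.

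With $\varphi$ monotone, I would invoke the Lebesgue differentiation theorem for monotone functions: $\varphi$ is differentiable $m$-almost everywhere on $J$, and likewise $z'_r = (\cos\varphi, \sin\varphi)$ is differentiable $m$-a.e. by the chain rule, with $z''_r(t) = \varphi'(t)\,(-\sin\varphi(t), \cos\varphi(t))$ at every point of differentiability of $\varphi$. This produces the well-defined $m$-a.e. function $z''_r \colon J \to \R^2$; the analogous argument gives $z''_l$, using the left-continuous version of $\varphi$. Since the two angle functions coincide outside a countable set, they have the same derivative $m$-a.e., hence $z''_l = z''_r$ $m$-almost everywhere.

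The one technical point deserving care—and the step I expect to be the main obstacle—is justifying monotonicity of the angle function rigorously from the definition of a convex curve as the boundary of an open convex set, rather than from a smooth parametrization. Concretely, one must show that for $s < t$ the right tangent directions satisfy $\theta(z'_r(s), z'_r(t)) \in [0,\pi]$ (no backtracking), which follows from the supporting line characterization of convexity: at each point the curve lies on one side of its supporting line, and moving counterclockwise the outward normals rotate monotonically. This is precisely the kind of elementary-but-fiddly fact the authors have deferred to Appendix~\ref{sec:appendix}, so I would cite the appendix for the monotonicity of $\varphi$ and keep the main-text argument to the chain-rule computation plus the invocation of differentiability of monotone functions. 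Everything else—that $|z''_r| = |\varphi'|$ and that the vector formula holds a.e.—is a routine consequence.
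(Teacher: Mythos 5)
Your proposal follows the same route as the paper: reduce to the angle functions $\theta_l,\theta_r$ of the one-sided tangents, observe that convexity makes them monotone (this is Lemma~\ref{thm:coincidence} in the Appendix), invoke a.e.\ differentiability of monotone/BV functions (the paper cites Theorem~\ref{thm:derivative_increasing}), and finish by the chain rule giving $z''_{l/r}=\theta'_{l/r}(-\sin\theta_{l/r},\cos\theta_{l/r})$. The one place you wave your hands—``since the two angle functions coincide outside a countable set, they have the same derivative $m$-a.e.''—is exactly the place the paper handles slightly differently and more cleanly: it notes $\theta_r-\theta_l$ is BV, nonnegative, and zero $m$-a.e., so at any point where it vanishes and is differentiable its derivative is squeezed to $0$, giving $\theta'_l=\theta'_r$ a.e.\ directly; your countable-jump-set version can be made rigorous by the same squeeze using the monotonicity inequalities $\theta_r(s)\le\theta_l(t)\le\theta_r(t)$ for $s<t$, but as written it is the weakest link in an otherwise correct argument.
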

Next, we define the Borel measure $\sigma$ on $J$ as follows. For all $a,b \in J$, $a \leq b$ we define
\begin{equation} \label{eq:sigma}
\begin{aligned}
& \sigma((a,b)) \coloneqq \max \{ 0, \theta(z'_r(a),z'_l(b)) \}, \qquad \qquad & \sigma((a,b]) \coloneqq \theta(z'_r(a),z'_r(b)), \\
& \sigma([a,b)) \coloneqq \theta(z'_l(a),z'_l(b)), \qquad \qquad & \sigma([a,b]) \coloneqq \theta(z'_l(a),z'_r(b)).
\end{aligned}
\end{equation}
We denote by $\kappa$ the metric density associated with the absolutely continuous part of $\sigma$ with respect to the Lebesgue measure $m$ on $J$. 
\begin{theorem} \label{thm:kappa_zeta''}
	The measure $\sigma$ is positive. The function $\kappa$ coincides $m$-almost everywhere with the functions $\det \begin{pmatrix} z_l' & z_l'' \end{pmatrix}$ and $\det \begin{pmatrix} z_r' & z_r'' \end{pmatrix}$. 
\end{theorem}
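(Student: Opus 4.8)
The plan is to work entirely on the interval $J$, using the arclength parametrization $z$ and the angular measure $\sigma$, and to reduce everything to one-dimensional monotonicity facts. First I would establish positivity of $\sigma$. Since $\Gamma$ is the boundary of a convex set and $z$ is the counterclockwise arclength parametrization, the map $t \mapsto z'(t) \in \mathbb{S}^1$ (taking, say, the right derivative $z'_r$, which by Theorem~\ref{thm:derivative_z} agrees $m$-a.e.\ with $z'_l$) is a monotone map into the circle: as $t$ increases, the unit tangent rotates counterclockwise and never backtracks, because convexity forces the curve to lie on one side of each supporting line. Concretely, for $a \le b \le c$ in $J$ one has $\theta(z'_r(a), z'_r(c)) = \theta(z'_r(a), z'_r(b)) + \theta(z'_r(b), z'_r(c))$ (angles adding without wraparound, at least as long as the total turning on the relevant piece is less than $2\pi$, which holds on compact convex curves since the total turning is exactly $2\pi$; the degenerate equality cases and the definition of $\sigma$ on open versus half-open intervals are handled by the four cases in the definition and Theorem~\ref{thm:derivative_z}). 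This additivity on intervals, together with $\theta \ge 0$, gives that $\sigma$ is a positive Borel measure on $J$, and it is finite with total mass $\le 2\pi$.

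Next I would identify the absolutely continuous density $\kappa$ of $\sigma$. By construction $\kappa = \sigma'$ in the Radon--Nikodym sense, and by Theorem~\ref{thm:derivative} (Rudin 7.14), applied with the nicely shrinking sets $E_\varepsilon(t) = (t, t+\varepsilon)$ or $(t-\varepsilon,t)$, we have
\begin{equation*}
\kappa(t) = \lim_{\varepsilon \to 0^+} \frac{\sigma((t,t+\varepsilon))}{\varepsilon} = \lim_{\varepsilon \to 0^+} \frac{\theta(z'_r(t), z'_r(t+\varepsilon))}{\varepsilon}
\end{equation*}
for $m$-a.e.\ $t \in J$. On the other hand, at a point $t$ where $z'_r$ is differentiable (which is $m$-a.e.\ $t$ by Theorem~\ref{thm:derivative_z'}), write $e(t) = z'_r(t) \in \mathbb{S}^1$, so $z''_r(t) = e'(t)$ is well-defined and, since $|e| \equiv 1$, is orthogonal to $e(t)$; hence $e'(t) = \kappa_0(t)\, e(t)^\perp$ for a scalar $\kappa_0(t)$, where $v^\perp$ denotes the counterclockwise rotation of $v$ by $\pi/2$. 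Then $\det(z'_r(t)\; z''_r(t)) = \det(e(t)\; \kappa_0(t) e(t)^\perp) = \kappa_0(t) \det(e(t)\; e(t)^\perp) = \kappa_0(t)$. It remains to match $\kappa_0(t)$ with the angular derivative above: for small $\varepsilon>0$, $\theta(e(t), e(t+\varepsilon))$ equals the signed angle from $e(t)$ to $e(t+\varepsilon)$ modulo $2\pi$, and a first-order Taylor expansion $e(t+\varepsilon) = e(t) + \varepsilon e'(t) + o(\varepsilon) = e(t) + \varepsilon \kappa_0(t) e(t)^\perp + o(\varepsilon)$ gives $\theta(e(t), e(t+\varepsilon))/\varepsilon \to \kappa_0(t)$ when $\kappa_0(t) \ge 0$ (and one checks the angular measure is defined to pick the nonnegative determination, consistent with positivity of $\sigma$, so the case $\kappa_0(t)<0$ does not arise on a.e.\ of the set where things are differentiable). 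Combining, $\kappa(t) = \kappa_0(t) = \det(z'_r(t)\; z''_r(t))$ for $m$-a.e.\ $t$; the same argument with left derivatives and $E_\varepsilon(t)=(t-\varepsilon,t)$, together with the $m$-a.e.\ coincidence of $z'_l$ with $z'_r$ and of $z''_l$ with $z''_r$ from Theorems~\ref{thm:derivative_z} and~\ref{thm:derivative_z'}, gives $\kappa = \det(z'_l\; z''_l)$ $m$-a.e.\ as well.

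The main obstacle I anticipate is the careful bookkeeping at the interface between the one-sided derivatives, the angular measure $\sigma$ with its slightly different definitions on the four types of intervals, and the passage from the discrete angle increments to the infinitesimal determinant: one has to be sure that the "max with $0$" in the definition of $\sigma((a,b))$ and the branch choice for $\theta \in [0,2\pi)$ do not interfere with the limit, which is ultimately guaranteed by the global monotonicity coming from convexity (the tangent turns in one direction only). A secondary, more technical point is justifying the first-order expansion of $e$ at a point of differentiability of $z'_r$ with enough uniformity to control the $\theta$-increment — but since $e$ is $\mathbb{S}^1$-valued and differentiable there, this is a standard calculus estimate and I would not belabor it. The positivity of $\sigma$ and the structural convexity facts used above are the parts deferred to Appendix~\ref{sec:appendix}, so in the main text the proof reduces to the Radon--Nikodym/Taylor computation just sketched.
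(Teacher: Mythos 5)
Your proposal takes essentially the same route as the paper: positivity of $\sigma$ from the monotonicity of the tangent direction (Lemma~\ref{thm:coincidence} in the appendix), then Rudin's differentiation theorem (Theorem~\ref{thm:derivative}) to express $\kappa$ as a limit of difference quotients of $\sigma$, and finally the identification $\det(z'_r\; z''_r) = \theta'_r$ via the unit-circle parametrization. One small slip: for the open interval, $\sigma((t,t+\varepsilon)) = \max\{0,\theta(z'_r(t),z'_l(t+\varepsilon))\}$ involves $z'_l$, not $z'_r$, at the right endpoint; using the half-open interval $(t,t+\varepsilon]$ gives $\theta(z'_r(t),z'_r(t+\varepsilon))$ exactly and still shrinks nicely.

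On the bookkeeping issue you flag as the main obstacle: the paper resolves it cleanly by introducing an auxiliary Borel measure $\sigma_r$ defined purely through $\theta_r$, noting that $\sigma$ and $\sigma_r$ agree on all intervals of the form $(b-\varepsilon,b]$, and then applying Theorem~\ref{thm:derivative} to the difference measure $\sigma - \sigma_r$ to conclude that the absolutely continuous densities of $\sigma$ and $\sigma_r$, namely $\kappa$ and $\theta'_r$, coincide $m$-a.e. This replaces your informal appeal to ``global monotonicity'' with a one-line measure-theoretic argument and sidesteps any case analysis on the four interval types. Your orthogonality argument $e'(t) = \kappa_0(t)\,e(t)^\perp$ is a slightly more abstract way of arriving at the same formulas \eqref{eq:determinants_1}--\eqref{eq:determinants_2} that the paper writes out explicitly with $\cos\theta_r$, $\sin\theta_r$; both are fine.
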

Finally, we define the \emph{affine arclength measure $\nu$} on $J$ by
\begin{equation*}
\diff \nu(t) = \sqrt[3]{\kappa(t)} \diff t.
\end{equation*}
With a slight abuse of notation, we denote by $\nu$ also its push-forward to $\Gamma$ via the affine arclength parametrization $z$.

\section{Proof of Theorem~\ref{thm:comparability}} \label{sec:affine_measure}

We begin by stating and proving an auxiliary lemma about the qualitative relation between the affine measure $\mu$ and the Lebesgue measure $m$ on $J$.

\begin{lemma} \label{thm:abscont}
	The measure $\mu$ is absolutely continuous with respect to the Lebesgue measure $m$ on $J$, namely for every subset $E \subseteq J$ we have
	\begin{equation*}
	m(E) = 0 \Rightarrow \mu(E) = 0.
	\end{equation*}
\end{lemma}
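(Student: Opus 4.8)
Recall that by the convention of the paper $\mu(E)$ for $E\subseteq J$ means $\mu^\ast(z(E))$, where $z\colon J=[0,\ell(\Gamma))\to\Gamma$ is the (bijective) arclength parametrization, and that it suffices to show $\mu^\ast(z(E))=0$ whenever $m(E)=0$, since a set of vanishing outer measure is Carath\'eodory measurable and null. The plan is to cover $z(E)$ efficiently by very thin rectangles, exploiting that a convex curve is nearly flat at small scales while its total turning is bounded, and then to convert the Lebesgue-nullity of $E$ into smallness of $\sum|R|^{1/3}$ via H\"older's inequality.

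Fix $\varepsilon>0$ and $\delta>0$. Since $m(E)=0$, after discarding a countable set from $E$ (whose image is $\mu^\ast$-null because $\mu^\ast(\{p\})=0$ for every point $p$ and $\mu^\ast$ is countably subadditive) we may cover $E$ by pairwise disjoint open intervals $I_j=(a_j,b_j)\subseteq J$ with $\sum_j|I_j|<\varepsilon$ and $|I_j|\le\delta/\sqrt2$ for all $j$. Write $L_j\coloneqq|I_j|$ and let $\alpha_j\coloneqq\sigma(I_j)$ be the total turning over $I_j$. The geometric core of the argument is the claim that $z(I_j)$ is contained in a rectangle $R_j$ with
\begin{equation*}
\abs{R_j}\le L_j^{2}\min\{1,\alpha_j\},\qquad \diam R_j\le \sqrt2\,L_j\le\delta .
\end{equation*}
Indeed, along a counterclockwise convex arc the tangent direction rotates monotonically, so the right derivatives $z'_r(t)$ for $t\in I_j$ all lie in the counterclockwise circular arc from $z'_r(a_j)$ to $z'_l(b_j)$, of angular length $\alpha_j$. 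If $\alpha_j\le\pi$, choose a unit vector $v$ within angle $\alpha_j/2$ of every such tangent; from $z(t)-z(a_j)=\int_{a_j}^{t}z'(s)\,\diff s$ together with $\abs{z'(s)\cdot v}\le1$ and $\abs{z'(s)\cdot v^{\perp}}\le\sin(\alpha_j/2)\le\alpha_j/2$ we see $z(I_j)$ fits in a rectangle with side $L_j$ in direction $v$ and side $L_j\alpha_j/2$ in direction $v^{\perp}$; if $\alpha_j>\pi$ then $z(I_j)$, having diameter at most $L_j$, fits in its axis-parallel bounding box, of area at most $L_j^{2}=L_j^{2}\min\{1,\alpha_j\}$. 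In both cases the diameter bound is immediate.

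Since $\{R_j\}\subseteq\mathcal{R}^{\delta}$ covers $z(E)$, the definition of $\mu^{\delta}$ and $\min\{1,\alpha_j\}^{1/3}\le\alpha_j^{1/3}$ give, by H\"older with exponents $3/2$ and $3$,
\begin{equation*}
\mu^{\delta}(z(E))\le\sum_j\abs{R_j}^{1/3}\le\sum_j L_j^{2/3}\alpha_j^{1/3}\le\Big(\sum_j L_j\Big)^{2/3}\Big(\sum_j\alpha_j\Big)^{1/3}.
\end{equation*}
As the $I_j$ are disjoint, $\sum_j\alpha_j=\sum_j\sigma(I_j)\le\sigma(J)=2\pi$ (only finiteness of $\sigma(J)$ is needed here), while $\sum_j L_j<\varepsilon$, so $\mu^{\delta}(z(E))\le(2\pi)^{1/3}\varepsilon^{2/3}$. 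Letting $\delta\to0$ yields $\mu^{\ast}(z(E))\le(2\pi)^{1/3}\varepsilon^{2/3}$, and letting $\varepsilon\to0$ gives $\mu^{\ast}(z(E))=0$, i.e.\ $\mu(E)=0$, as claimed. For a non-compact convex curve one applies this to each $\Gamma_N$ and passes to the limit. The main obstacle is the geometric covering estimate above, which hinges on the monotone rotation of the tangent vector along a convex arc and on the finiteness of the turning measure $\sigma$; both are supplied by the preliminary results on $z'_l,z'_r$ and $\sigma$ recalled in Section~\ref{sec:preliminaries}.
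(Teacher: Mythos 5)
Your proof is correct and follows essentially the same route as the paper: cover $E$ by disjoint small intervals, enclose each $z(I_j)$ in a thin rectangle whose area is controlled by $|I_j|^2\,\sigma(I_j)$, and then close with H\"older with exponents $(3/2,3)$ together with $\sum_j |I_j| < \varepsilon$ and $\sum_j \sigma(I_j) \le 2\pi$. The only divergence is in the geometric covering step: the paper takes the minimal rectangle $R(I)$ over the chord (Definition~\ref{def:rect_over_interval}) and bounds its aspect ratio $h/\ell$ by $\sin\phi + \sin\psi \le \sigma(I)$, whereas you integrate the unit tangent against a fixed direction $v$ lying within $\sigma(I)/2$ of all tangents on $I$, bounding the perpendicular spread by $|I|\sin(\sigma(I)/2)$ — two equally elementary convexity arguments yielding the same $|R| \lesssim |I|^2\sigma(I)$, after which the two proofs are identical. (Minor cosmetic notes: your stated bound $|R_j| \le L_j^2\min\{1,\alpha_j\}$ for $1 < \alpha_j \le \pi$ really follows from the bounding-box option rather than the tangent construction, and the diameter bound $\sqrt{2}L_j$ holds for the bounding box but for the tangent rectangle one gets $L_j\sqrt{1+\alpha_j^2}$; neither affects the argument since only $|R_j|^{1/3} \le L_j^{2/3}\alpha_j^{1/3}$ and $\diam R_j \to 0$ are used.)
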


In its proof, we need the following auxiliary definition.

\begin{definition} \label{def:rect_over_interval}
Let $I \subseteq J$ be an interval. Let $c$ and $d$ be in the closure $\bar{J}$ of $J$ such that $\bar{I}= [c,d]$. Assume that $\sigma((c,d)) \leq \pi/2$. We define the \emph{rectangle $R(I)$ over $I$} to be the minimal rectangle containing $z(I)$ as follows.

If $z'$ is constant on the interior of $I$, then $z(I)$ is a segment. The affine measure $\mu$ of $z(I)$ is zero, as $z(I)$ can be covered by arbitrarily thin rectangles. We define $R(I)$ to be the segment $z(I)$ itself.

If $z'$ is not constant on the interior of $I$, then we define $R(I)$ to be the rectangle with two adjacent vertices in $z(c)$ and $z(d)$, and minimal width $h(R(I))$, see Figure~\ref{fig:1}. The condition on $z'$ guarantees that $h(R(I)) > 0$. Moreover, let $b(R(I)) = \abs{z(d)-z(c)}$. Furthermore, let the point $z(e)$ be in the intersection between $z(I)$ and the side of the rectangle opposite to that connecting $z(c)$ to $z(d)$. Finally, let $\phi$ and $\psi$ be the angles defined by 
\begin{equation*}
\phi \coloneqq \theta( z(e) - z(c), z(d) - z(c) ), \qquad \qquad \psi \coloneqq \theta( z(c) - z(d), z(e) - z(d) ).
\end{equation*}
\end{definition}

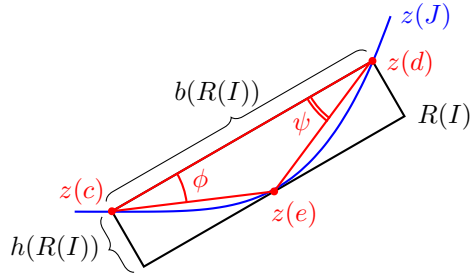
\begin{figure}[H]
	\centering
	\begin{tikzpicture}[scale=1]
	\draw[rotate around={30:(0,0)},thick] (0,0) rectangle (4,0.85);
	\draw (4,2)  node {$R(I)$};
	\draw[scale=1,domain=-0.5:3.7,smooth,variable=\x,blue,thick] plot ({\x-0.42},{\x*\x*\x*\x/72+0.73}) node[right] {$z(J)$};
	\draw (-0.4,1) node [above,red,left]{$z(c)$};
	\draw (2,1) node [red,below]{$z(e)$};
	\draw[red,thick,rotate around={30:(0,0)},thick] (0,0.85) -- (3.97,0.85) -- (2,0) -- (0,0.85);
	\draw (3.1,2.7) node [above,red, right]{$z(d)$};
	\draw[red] (0.75,1.15) node {$\phi$};
	\draw[red] (2.1,1.9) node {$\psi$};
	\draw[rotate around={30:(0,0)},red,thick] ([shift=(-23.5:1)]0,0.85) arc (-23.5:0:1);
	\draw[rotate around={30:(0,0)},red,thick] ([shift=(180:0.95)]4,0.85) arc (180:203.5:0.95);
	\draw[rotate around={30:(0,0)},red,thick] ([shift=(180:1)]4,0.85) arc (180:203.5:1);
	\fill[rotate around={30:(0,0)},red] (0,0.85) circle (1.5 pt);
	\fill[rotate around={30:(0,0)},red] (2,0) circle (1.5 pt);
	\fill[rotate around={30:(0,0)},red] (4,0.85) circle (1.5 pt);
	\draw [rotate around={30:(0,0)},decorate,decoration={brace,amplitude=4pt},xshift=0pt,yshift=0pt]
	(0,1) -- (4,1);
	\draw [rotate around={30:(0,0)},decorate,decoration={brace,amplitude=4pt},xshift=0pt,yshift=0pt]
	(-0.15,0) -- (-0.15,0.85);
	\draw[rotate around={30:(0,0)}] (-0.9,0.8) node {$h(R(I))$};
	\draw[rotate around={30:(0,0)}] (2,1.5) node {$b(R(I))$};
	\end{tikzpicture} 
	\caption{The rectangle $R(I)$ over the interval $I$.} \label{fig:1}
\end{figure}

\begin{proof}[Proof of Lemma~\ref{thm:abscont}]
Let $E \subseteq J$ be such that $m(E)=0$. We want to show that for every $\rho > 0$ there exists a covering of $z(E)$ by a collection of rectangles with bounded diameter such that the sum of their areas is bounded by $\rho$.

By assumption, $E$ has 1-dimensional Hausdorff measure zero. Therefore, for every $\varepsilon >0$ there exists a covering of $E$ by disjoint intervals $\{ I_n = [c_n,d_n) \colon n \in \N \}$ of bounded lengths $\ell_n=m(I_n) = \abs{d_n - c_n}$ such that
\begin{equation} \label{eq:measurezero}
\sum_{n \in \N} \ell_n \leq \varepsilon.
\end{equation}
Without loss of generality, up to splitting every interval into four subintervals, we can assume $\sigma(I_n) \leq \pi/2$.
 
The set $z(E)$ can be covered by the family $\{R_n \colon n \in \N \}$ of rectangles, where for every $n \in \N$ we define $R_n = R(I_n)$ to be the rectangle over the interval $I_n$ as in Definition~\ref{def:rect_over_interval}. The diameter of $R_n$ is bounded from above by
\begin{equation*}
\abs{z(e) - z(c)} + \abs{z(d) - z(e)}.
\end{equation*}
By the definition of the length of a curve, see Definition~\ref{def:rectifiability} in the Appendix, the sum in the previous display is bounded from above by $\ell(z(I_n))$. Finally, since $z$ is an arclength parametrization, we have that $\ell(z(I_n)) = \ell_n$. Therefore, for every $n \in \N$ the diameter of $R_n$ is bounded from above.

Moreover, we claim that for every $n \in \N$ we have
\begin{equation}  \label{eq:angle}
\frac{h_n}{\ell_n} \leq \sigma(I_n),
\end{equation}
where $h_n = h(R_n)$. In fact, for $e_n$, $\phi_n$, and $\psi_n$ as in Definition~\ref{def:rect_over_interval} and Figure~\ref{fig:1} we have
\begin{align*}
	\frac{h_n}{\ell_n} & \leq \frac{h_n}{\abs{c_n-e_n}} + \frac{h_n}{\abs{d_n-e_n}} \leq \\
	& \leq \frac{h_n}{\abs{z(c_n)-z(e_n)}} + \frac{h_n}{\abs{z(d_n)-z(e_n)}} = \sin \phi_n + \sin \psi_n \leq \phi_n + \psi_n \leq \sigma(I_n).
\end{align*}

Therefore, we have
\begin{align*}
\sum_{n \in \N} \abs{R_n}^{\frac{1}{3}} = \sum_{n \in \N} (b_n h_n)^{\frac{1}{3}} & \leq \sum_{n \in \N} \ell_n^{\frac{2}{3}} \Big(\frac{h_n}{\ell_n}\Big)^{\frac{1}{3}} \\ 
& \leq \sum_{n \in \N} \ell_n^{\frac{2}{3}} \sigma(I_n)^{\frac{1}{3}} \\
& \leq \Big( \sum_{n \in \N} \ell_n \Big)^{\frac{2}{3}} \Big( \sum_{n \in \N} \sigma(I_n) \Big)^{\frac{1}{3}} \\
& \leq \varepsilon^{\frac{2}{3}} (2 \pi)^{\frac{1}{3}} ,
\end{align*}
where we used the definition of the length of a curve to dominate $b_n = b(R_n)$ by $\ell_n$ in the first inequality, the inequality in \eqref{eq:angle} in the second, H\"{o}lder's inequality with the couple of exponents $(3/2, 3)$ in the third, and the inequality in \eqref{eq:measurezero}, the disjointness of $I_n$, and the definition of $\sigma$ in the fourth.   

By taking $\varepsilon$ arbitrarily small, we obtain the desired result.
\end{proof}

Next, we prove the quantitative relation between the affine measure $\mu$ and the affine arclength measure $\nu$ stated in Theorem~\ref{thm:comparability}.

\begin{proof} [Proof of Theorem~\ref{thm:comparability}]

Without loss of generality, up to splitting $J$ into eight disjoint subintervals, we can assume $\sigma(J) \leq \pi/4$. It is enough to prove the desired comparability for every subset $E \subseteq J$.

{\textbf{Part~I: $A \nu \leq \mu$.}} 
Let $R$ be a closed rectangle such that
\begin{equation*}
R \cap z(J) = z([c,d]),
\end{equation*}
where $[c,d] \subseteq J$. Let $\Phi \colon \Delta([c,d]) \to R+R$ be the function defined by
\begin{equation*}
\Phi (s,t) = z(s)+z(t).
\end{equation*}
The determinant of its Jacobian is defined $m$-almost everywhere, and it is
\begin{equation*}
\det \begin{pmatrix} z'(t) & z'(s) \end{pmatrix}.
\end{equation*}
Since the area of the subset $R+R$ is $4\abs{R}$, we have
\begin{equation} \label{eq:det_rect}
\begin{aligned}
4 \abs{R} & \geq  \int_{\Delta([c,d])} \det \begin{pmatrix} z'(t) & z'(s) \end{pmatrix} \diff s \diff t \\
& =  \int_{\Delta([c,d])} \Big( \int_{[t,s]}  \det \begin{pmatrix} z'(t) & \diff z' \end{pmatrix} \Big) \diff s \diff t \\
& \geq  \int_{\Delta([c,d])} \int_t^s  \det \begin{pmatrix} z'(t) & z''(u) \end{pmatrix} \diff u \diff s \diff t,
\end{aligned}
\end{equation}
where $\diff z'$ is the distributional derivative of $z'$, and $z''$ is a function coinciding $m$-almost everywhere with $z_l''$ and $z_r''$.

For $m$-almost all $t,u \in J$, $t \leq u$ we have
\begin{equation} \label{eq:k_det}
\begin{aligned}
 \det \begin{pmatrix} z'(t) & z''(u) \end{pmatrix} & = \abs{z''(u)} \sin( \theta( z'(t),z'(u)) + \theta( z'(u),z''(u)) )  \\
 & = \abs{z''(u)} \cos( \theta( z'(t),z'(u)) ) \\
 & \geq \frac{1}{2} \abs{z''(u)} \sin( \theta( z'(u),z''(u)) ) \\
 & = \frac{1}{2}  \det \begin{pmatrix} z'(u) & z''(u) \end{pmatrix},
\end{aligned}
\end{equation}
where in the second and in the third equality, as well as in the inequality we used
\begin{equation*}
\theta( z'(u),z''(u)) = \frac{\pi}{2},
\end{equation*}
and in the inequality we also used
\begin{equation*}
0 \leq \theta( z'(t),z'(u)) \leq \sigma(J) \leq \frac{\pi}{4}.
\end{equation*}

Therefore, there exists a constant $ C < \infty$ such that we have
\begin{align*}
\nu([c,d]) & = \int_c^d \sqrt[3]{\kappa(u)} \diff u \\
& = \int_c^d ((d-u)(u-c))^{-\frac{1}{3}} ((d-u)(u-c))^{\frac{1}{3}} \sqrt[3]{\kappa(u)} \diff u \\
& \leq \Big( \int_c^d ((d-u)(u-c))^{-\frac{1}{2}} \diff u \Big)^{\frac{2}{3}} \Big( \int_c^d (d-u)(u-c) \kappa(u) \diff u \Big)^{\frac{1}{3}} \\
& \leq C \Big( \int_c^d \int_c^u \int_u^d \kappa(u) \diff s \diff t \diff u \Big)^{\frac{1}{3}} \\
& \leq C \Big( \int_{\Delta([c,d])} \int_{s}^{t} \kappa(u) \diff u \diff s \diff t \Big)^{\frac{1}{3}} \\
& \leq 2 C \abs{R}^{\frac{1}{3}},
\end{align*}
where we used the definition of $\nu$ in the first equality, H\"{o}lder's inequality with the couple of exponents $(3/2,3)$ in the first inequality, we evaluated the first factor, which is independent of $c$ and $d$, in the third equality, we used Fubini in the second inequality, and we used Theorem~\ref{thm:kappa_zeta''} and the chains of inequalities in \eqref{eq:k_det} and \eqref{eq:det_rect} in the third inequality.  

Now, let $\{ R_n \colon n \in \N \}$ be a set of rectangles covering $z(E)$ and define $E_n \subseteq E$ by 
\begin{equation*}
z(E_n)= z(E) \cap R_n.
\end{equation*}
Then $\{ E_n \colon n \in \N \}$ is a covering of $E$, and we have
\begin{equation*}
\sum_{n \in \N} \abs{R_n}^{\frac{1}{3}} \geq 2 C \sum_{n \in \N} \nu(E_n) \geq 2 C \nu(E).
\end{equation*}
By taking the $\liminf$ over all the possible coverings, we obtain the desired inequality.

{\textbf{Part~II: $\mu \leq B \nu$.}} By Lemma~\ref{thm:abscont}, there exists a function $\mu' \colon J \to [0, \infty)$ defined $m$-almost everywhere such that for every measurable subset $E \subseteq J$ we have
\begin{equation*}
\mu(E) = \int_E \mu'(t) \diff t.
\end{equation*}

By Theorem~\ref{thm:derivative}, for $m$-almost every $t \in J$ we have
\begin{equation*}
\mu'(t)=\lim_{\varepsilon \to 0} \frac{ \mu([s,s+\varepsilon])}{\varepsilon}, \qquad \qquad \text{ where $t \in [s,s+\varepsilon]$. }
\end{equation*}

As in the proof of Lemma~\ref{thm:abscont}, the limit is bounded from above by
\begin{equation*}
\lim_{\varepsilon \to 0} \frac{ \varepsilon^{\frac{2}{3}} (\sigma ([s,s+\varepsilon]))^{\frac{1}{3}}}{\varepsilon} = \Big( \lim_{\varepsilon \to 0} \frac{ \sigma ([s,s+\varepsilon])}{\varepsilon} \Big)^{\frac{1}{3}}.
\end{equation*}

By Theorem~\ref{thm:derivative} and Theorem~\ref{thm:kappa_zeta''}, we obtain the desired inequality.
\end{proof}

\section{Proofs of Theorem~\ref{thm:uniform2} and the corollaries} \label{sec:proofs}

	We begin with an auxiliary definition.
	
	\begin{definition}	
	A measurable function $a$ in $\R^n$ is a \emph{bump function} if
	there exists a rectangular parallelepiped $R$ centred at the origin with sides parallel to the axes such that
	\begin{equation*}
	\norm{a}_{L^\infty(\R^n)}\le \abs{R}^{-1}1_R.
	\end{equation*}
	We denote by $\mathcal{A}_n$ the collection of bump functions on $\R^n$.
	\end{definition}
	The convolution with such bump functions is pointwise bounded by the strong Hardy-Littlewood maximal function, uniformly in the rectangle. 
	
	Next, we state and prove an auxiliary lemma about the boundedness properties of the adjoint operator of a certain linearised maximal Fourier restriction operator.
	
	\begin{lemma}\label{thm:lemma2}
		Let $1\le  r< 2$. There exists a constant $C=C(r) < \infty$ such that the following property holds true.
		
		For every convex curve $\Gamma$ parametrized by arclength $z: J \to \Gamma \subseteq \R^2$ and every collection $\{ a_{z(t)} \colon t \in J \} \subseteq \mathcal{A}_2$ of bump functions such that, as a function in $(t,x)$,
		\begin{equation*}
		a_{z(t)}(x) \in L^{\infty}(\diff \nu(t); L^1(\diff x)),
		\end{equation*}
		let $S = S(\Gamma,\{ a \})$ be the operator defined as follows. For every function $f \in L^4(J,\nu)$ we define
		\begin{equation*}
		Sf(\xi)=\int_{J} \widehat{a}_{z(t)}(\xi) e^{2\pi i  \xi \cdot z(t)}  f(t  ) \diff \nu (t) .
		\end{equation*}
		Then, we have
		\begin{equation*}
		\norm{Sf}_{L^{2r'}(\R^2)} \le C \norm{f}_{L^{\frac{2r}{3-r}}(J,\nu)} .
		\end{equation*}
	\end{lemma}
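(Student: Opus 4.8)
The plan is to interpolate between an $L^2 \to L^2$-type estimate and an $L^1 \to L^\infty$-type estimate for the operator $S$, following the strategy of Ramos. Write $\nu$ for the affine arclength measure on $\Gamma$. The key point is that, by Theorem~\ref{thm:comparability}, the affine arclength measure $\nu$ is comparable (uniformly in $\Gamma$) to Oberlin's affine measure $\mu$, which enjoys good covering/scaling properties with rectangular parallelepipeds; this is what will let us run a curve-independent argument.

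First I would establish the endpoint $L^2(J,\nu) \to L^4(\R^2)$ bound for $S$, which corresponds to the classical uniform Fourier restriction estimate for convex curves with respect to the affine arclength measure (Carleson--Sj\"olin, Sj\"olin), combined with the harmless presence of the factors $\widehat{a}_{z(t)}$: since each $a_{z(t)} \in \mathcal{A}_2$ is $L^1$-normalized, $\norm{\widehat{a}_{z(t)}}_{L^\infty(\R^2)} \le 1$, so multiplication by $\widehat{a}_{z(t)}(\xi)$ only helps. More precisely, $S$ is (a variant of) the adjoint of the restriction operator, and the extension estimate $\norm{\mathcal{E}f}_{L^4(\R^2)} \lesssim \norm{f}_{L^{4/3}(J,\nu)}$ dualizes appropriately. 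Here one exploits that $2p' = 4$ and $\frac{2p}{3-p} = \frac{4}{3}$ precisely when $p = \frac{4}{3}$, so the $p=\frac{4}{3}$ case is exactly the (endpoint of the) known uniform restriction theorem; I would quote the uniform $L^{4/3}\to L^4$ extension estimate for convex curves with affine arclength measure, taking care that the constant is uniform in $\Gamma$ — this uniformity is where Theorem~\ref{thm:comparability} and the affine-invariance of $\mu$ enter, reducing an arbitrary convex curve to a controlled normalized piece.

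Next I would establish the other endpoint, a trivial $L^1$-type bound: for $p = 1$ we must show $\norm{Sf}_{L^\infty(\R^2)} \lesssim \norm{f}_{L^{1}(J,\nu)}$. This is immediate from the integral formula for $Sf$ and $|\widehat{a}_{z(t)}(\xi) e^{2\pi i \xi\cdot z(t)}| \le 1$:
\begin{equation*}
\abs{Sf(\xi)} \le \int_J \abs{f(t)}\,\diff\nu(t) = \norm{f}_{L^1(J,\nu)}.
\end{equation*}
Having the two endpoints — $(L^1(J,\nu), L^\infty(\R^2))$ at $p=1$ and $(L^{4/3}(J,\nu), L^4(\R^2))$ at $p=\frac{4}{3}$ — I would then interpolate by the Riesz--Thoren (or Stein interpolation) theorem along the line $\theta \mapsto p$, checking that the exponents match: the source exponent traces out $\frac{2p}{3-p}$ and the target $2p'$ as $p$ ranges over $[1,\frac{4}{3})$. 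Since the family $\{a_{z(t)}\}$ is fixed and the hypothesis $a_{z(t)}(x) \in L^\infty(\diff\nu; L^1(\diff x))$ guarantees $S$ is well-defined and bounded on the relevant dense classes (so the interpolation is legitimate), this yields the claimed inequality with $C = C(p)$ independent of $\Gamma$, $z$, and the choice of bumps.

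The main obstacle I anticipate is \emph{uniformity in the curve} at the $L^{4/3}\to L^4$ endpoint: the classical Carleson--Sj\"olin/Sj\"olin estimate is stated for a single (sufficiently regular) curve, and one must verify that with the affine arclength measure the constant can be taken independent of $\Gamma$, including for merely convex (non-$\mathcal{C}^2$) curves. I would handle this by an affine-rescaling/partition argument: decompose $J$ into finitely many arcs on which the curve is a small perturbation of a fixed model arc (using the convexity structure and the bound $\sigma(J)$ on total curvature), apply an affine map normalizing each arc, invoke affine-invariance of $\mu \asymp \nu$ from Theorem~\ref{thm:comparability} to transfer the estimate, and sum. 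A secondary technical point is ensuring the dualization between $S$ and the extension operator is clean despite the $t$-dependent multipliers $\widehat{a}_{z(t)}$; but since these are uniformly bounded by $1$ and enter multiplicatively, they can be absorbed without affecting the operator norm.
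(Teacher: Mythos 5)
Your plan is circular at its crucial endpoint, and this circularity cannot be patched by the rescaling argument you sketch. You propose to interpolate $S$ between the trivial bound at $p=1$ and ``the classical uniform restriction/extension estimate'' at the other end, but for \emph{arbitrary} convex curves (not $\mathcal{C}^2$) no such estimate exists in the literature to quote --- establishing exactly that, uniformly in $\Gamma$, is the content of this lemma and its consequence Corollary~\ref{thm:uniform}. The Carleson--Sj\"olin and Sj\"olin results you cite require $\mathcal{C}^2$ regularity; the entire point of the paper is to remove it. So the heavy endpoint of your interpolation is the thing you are trying to prove. The proposed fix --- partitioning $J$ into ``finitely many arcs on which the curve is a small perturbation of a fixed model arc'' --- cannot produce a curve-independent constant, because an arbitrary convex curve has no uniform bound on the number of such pieces (and indeed may fail to be a small perturbation of any smooth model on any arc, e.g.\ a polygon with many vertices, or a curve whose curvature measure $\sigma$ is purely singular).

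There are also arithmetic errors that signal a misunderstanding of where the exponents sit: at $p=4/3$ the lemma gives source $L^{8/5}(J,\nu)$ and target $L^{8}(\R^2)$, not $L^{4/3}\to L^4$; the pair you want, $2p'=4$, only occurs at the excluded endpoint $p=2$, where the source is $L^4$, not $L^{4/3}$. The dual of the extension endpoint for a curve is $L^4\to L^4$, not $L^{4/3}\to L^4$.

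The paper's actual route avoids all of this. It squares $Sf$ (Carleson--Sj\"olin), changes variables via $\Phi(s,t)=z(s)+z(t)$ to rewrite $Sf(\xi)^2$ as an integral over a planar region $\Omega$ against a function $F$, interpolates for the map $F\mapsto Sf^2$ between an $L^1\to L^\infty$ bound and Ramos's $L^2\to L^2$ bound (Lemma~\ref{thm:lemma1}) --- this is the step where the bump functions $a_{z(t)}$ are genuinely used, not merely absorbed --- and finally estimates $\|F\|_{L^p}$ by inverting $\Phi$, obtaining a kernel $|\sin(\theta(t)-\theta(s))|^{1-p}$, splitting into four angular regions, changing variables via $\kappa$, and applying Hardy--Littlewood--Sobolev. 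Your proposal omits both the squaring trick and the $\kappa$-change-of-variables/HLS step, which are where the uniformity over arbitrary convex curves is actually won.
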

	
	Its proof relies on a lemma about the boundedness properties of an adjoint operator of a linearised maximal operator combined with a Fourier transform proved by Ramos.
	
	\begin{lemma} [Ramos \cite{MR4055940}, Lemma~1]\label{thm:lemma1}
		Let $n,k \geq 1$. There exists a constant $C=C(n,k) < \infty$ such that the following property holds true.
		
		For every collection 
		\begin{equation*}
		\Big\{ b_x \colon \widehat{b}_x = \prod_{i=1}^k \widehat{b}_{x,i} , b_{x,i} \in \mathcal{A}_n, x \in \R^n \Big\},
		\end{equation*}
		of convolution products of $k$ bump functions such that, as function in $(x,y)$, 
		\begin{equation*}
		{b_x}(y) \in L^\infty( \diff x; L^1( \diff y)),
		\end{equation*}
		let $T = T(\{b_x \})$ be the operator defined as follows. For every function $f \in L^2(\R^n) \cap L^1(\R^n)$ we define
		\begin{equation*}
		Tf(\xi)=\int_{\R^d} \widehat{b}_x(\xi) e^{2\pi i x  \cdot \xi}  f(x)\diff x .
		\end{equation*}
		Then, we have 
		\begin{equation*}
		\norm{ Tf }_{L^2(\R^n)} \leq C \norm{f}_{L^2(\R^n)} .
		\end{equation*}
	\end{lemma}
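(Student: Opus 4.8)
The plan is to pass to the Fourier side, reduce the estimate to an $L^2\to L^2$ bound for a variable-kernel averaging operator, dispose of general $k$ by comparison with the case $k=1$, and then treat $k=1$ by decomposing into the dyadic scales of the underlying rectangles.

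\textbf{Step 1: Plancherel reduction.} With the convention $\widehat{g}(\xi) = \int g(y) e^{-2\pi i y \cdot \xi} \diff y$ one has the identity $\widehat{b_x}(\xi) e^{2\pi i x \cdot \xi} = \widehat{b_x(\cdot + x)}(\xi)$. Interchanging the order of integration --- legitimate since $f \in L^1$ and $b_x(y) \in L^\infty(\diff x; L^1(\diff y))$ --- one obtains $Tf = \widehat{Uf}$, where
\begin{equation*}
Uf(y) \coloneqq \int_{\R^n} b_x(y+x) f(x) \diff x .
\end{equation*}
By Plancherel $\norm{Tf}_{L^2(\R^n)} = \norm{Uf}_{L^2(\R^n)}$, so it suffices to prove $\norm{Uf}_{L^2} \le C \norm{f}_{L^2}$ for $f \in L^1 \cap L^2$, the general case following by density. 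Equivalently, $T^\ast T$ is the integral operator with kernel $\mathcal{H}(x,x') = (b_x \ast \overline{\widetilde{b_{x'}}})(x - x')$, $\widetilde{g}(y) = g(-y)$, which for fixed $(x,x')$ has integral $\le 1$, is bounded by $\min(\abs{R_x}^{-1}, \abs{R_{x'}}^{-1})$, and is supported in the axis-parallel rectangle $R_x + R_{x'}$, where $R_x$ denotes an axis-parallel rectangle centred at the origin realising $b_x$.

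\textbf{Step 2: Reduction to $k=1$.} Let $b_x = b_{x,1} \ast \dots \ast b_{x,k}$ with $b_{x,i} \in \mathcal{A}_n$ realised by rectangles $R_{x,i}$ of half-side-lengths $r^{(i)}_1, \dots, r^{(i)}_n$, and let $\widetilde{R}_x$ be the rectangle centred at the origin with half-side-lengths $\max_i r^{(i)}_\ell$. Then $\supp b_x \subseteq R_{x,1} + \dots + R_{x,k} \subseteq k \widetilde{R}_x$, and estimating the convolution pointwise by using, in each coordinate, the constraint imposed by one of the factors gives $\norm{b_x}_{L^\infty} \le \abs{\widetilde{R}_x}^{-1} = k^n \abs{k\widetilde{R}_x}^{-1}$. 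Hence $k^{-n} b_x \in \mathcal{A}_n$, and since the operator attached to $\{b_x\}$ is $k^n$ times the one attached to $\{k^{-n} b_x\}$, the case of general $k$ follows from the case $k = 1$ with $C(n,k) = k^n C(n,1)$.

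\textbf{Step 3: The case $k = 1$.} If all the $b_x$ are realised by one fixed rectangle $R$, then $\abs{Uf(y)} \le \abs{R}^{-1} \int_{-y + R} \abs{f(x)} \diff x$ is dominated by the strong Hardy--Littlewood maximal function of $f$ at $-y$ and the bound is immediate. In general, partition $\R^n_x$ into the pairwise disjoint sets $A_{\vec j}$, $\vec j \in \Z^n$, on which $R_x$ has side-lengths dyadically of order $2^{j_1}, \dots, 2^{j_n}$; write $f_{\vec j} = f 1_{A_{\vec j}}$, so $\norm{f}_{L^2}^2 = \sum_{\vec j} \norm{f_{\vec j}}_{L^2}^2$, and $Uf = \sum_{\vec j} U_{\vec j} f_{\vec j}$ with $U_{\vec j} g(y) = \int_{A_{\vec j}} b_x(y+x) g(x) \diff x$. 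On a single block the rectangle shape is frozen up to constants, so $\abs{U_{\vec j} f_{\vec j}}$ is still dominated by a maximal average of $f_{\vec j}$ and $\norm{U_{\vec j} f_{\vec j}}_{L^2} \lesssim \norm{f_{\vec j}}_{L^2}$. Expanding $\norm{Uf}_{L^2}^2 = \sum_{\vec j, \vec j'} \langle U_{\vec j} f_{\vec j}, U_{\vec j'} f_{\vec j'} \rangle$, the diagonal is then controlled by $\sum_{\vec j} \norm{f_{\vec j}}_{L^2}^2 = \norm{f}_{L^2}^2$, and one is left with the off-diagonal terms $\langle U_{\vec j} f_{\vec j}, U_{\vec j'} f_{\vec j'} \rangle$, $\vec j \neq \vec j'$; these are governed by the corresponding block of the kernel $\mathcal{H}$, namely $(b_x \ast \overline{\widetilde{b_{x'}}})(x - x')$ with $R_x, R_{x'}$ of different shapes, and must be shown to decay in $\abs{\vec j - \vec j'}$ so that Schur's test on $\Z^n$ applies and yields the constant $C = C(n)$.

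\textbf{Main obstacle.} The difficulty is entirely in the last point. The rectangles $R_x$ --- equivalently the frequency profiles of the symbols $\widehat{b_x}$, whose phases are unconstrained --- vary with $x$ over all scales and eccentricities, and every estimate based on the size alone of the kernel (Schur's test, or domination by a single maximal operator) diverges logarithmically in the number of scales: two bumps whose rectangles are eccentric in complementary coordinate directions already produce a logarithmically large contribution when only absolute values are used. The convergence is a genuine cancellation phenomenon, carried by the oscillatory factor $e^{2\pi i x \cdot \xi}$ --- equivalently, by the cancellation in the cross-correlation $b_x \ast \overline{\widetilde{b_{x'}}}$ --- and quantifying this cancellation across the uncontrolled range of scales is the heart of the argument.
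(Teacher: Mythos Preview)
The paper does not prove this lemma; it is quoted from Ramos and used as a black box. Your Steps~1 and~2 are correct, but Step~3 is left unfinished, and the obstacle you describe in the final paragraph is illusory: no cancellation is needed.

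The missing observation is that you should dualize once more. You reduced $\norm{T}_{L^2\to L^2}$ to $\norm{U}_{L^2\to L^2}$ and then tried to bound $U$ (or the kernel of $T^*T=U^*U$) directly; this is hard for exactly the reason you give, since the rectangle $R_x$ sits on the integration variable. But $\norm{U}_{L^2\to L^2}=\norm{U^*}_{L^2\to L^2}$, and
\begin{equation*}
U^*h(x)=\int_{\R^n}\overline{b_x(y+x)}\,h(y)\diff y,
\end{equation*}
so that $\abs{U^*h(x)}\le(\abs{b_{x,1}}*\cdots*\abs{b_{x,k}}*\abs{\widetilde h})(x)$ with $\widetilde h(y)=h(-y)$. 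Now $x$ is the \emph{output} variable, so for this fixed $x$ all the rectangles $R_{x,i}$ are fixed; convolution against each $\abs{b_{x,i}}\le\abs{R_{x,i}}^{-1}1_{R_{x,i}}$ is dominated pointwise by one application of the strong Hardy--Littlewood maximal function $M$, and iterating gives $\abs{U^*h(x)}\le M^k\abs{\widetilde h}(x)$. The $L^2$ boundedness of $M$ then yields $\norm{U^*h}_{L^2}\le C_n^k\norm{h}_{L^2}$, which is the lemma with $C(n,k)=C_n^k$. Equivalently, bypassing $U$ altogether, one checks directly that $\abs{T^*g(x)}\le M^k\abs{\check g}(x)$. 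Your Step~2 is thus correct but unnecessary.
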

		
	\begin{proof}[Proof of Lemma~\ref{thm:lemma2}]
		Without loss of generality, by the definition of $\nu$, we restrict our attention to $I \subseteq J$ where $z'_l$ and $z'_r$ coincide, and $\kappa(t)$ is well-defined and strictly positive.
		
		Following the idea of Carleson-Sj\"{o}lin in \cite{MR361607} and Sj\"{o}lin in \cite{MR385437}, we rewrite the square of $Sf$ via a two-dimensional integral
		\begin{align*}
		Sf(\xi)^2 & =\int_{I\times I} \widehat{a}_{z(t)}(\xi) {\widehat{a}_{z(s)}(\xi)} e^{2\pi
			\xi \cdot (z(t)+z(s))}  f(t ) {f(s)}   \diff \nu(t) \diff \nu(s) \\
		& = 2 \int_{\Delta(I)} \widehat{a}_{z(t)}(\xi) {\widehat{a}_{z(s)}(\xi)} e^{2\pi
			\xi \cdot (z(t)+z(s))}  f(t ) {f(s)}   \diff \nu(t) \diff \nu(s). 
		\end{align*}
		We change variables via the bijective function $\Phi \colon \Delta(I) \to \Omega \subseteq \R^2$ defined by
		\begin{equation*}
			\Phi(s,t) = z(s)+z(t),
		\end{equation*}  
		and for $(s,t) \in \Delta(I)$ we define
		\begin{align*}
		\widehat{b}_{z(s)+z(t)} & \coloneqq  \widehat{a}_{z(s)}   {\widehat{a}_{z(t)}} , \\
		F(z(s)+z(t)) & \coloneqq  f(s)  {f(t)}\abs{\det\begin{pmatrix} z'(s) & z'(t) \end{pmatrix}}^{-1} \sqrt[3] {\kappa(t)} \sqrt[3] {\kappa(s)}.
		\end{align*}
		By the definition of $\nu$ and $\Phi$, we obtain
		\begin{equation*}
		Sf(\xi)^2= 2 \int_\Omega  \widehat{b}_{x}(\xi)e^{2\pi i \xi \cdot  x}  F(x)\diff x .
		\end{equation*}
		
		Next, we prove by interpolation that for every $1\le r\le 2$ there exists a constant $C=C(r) < \infty$ such that we have   
		\begin{equation*}
		\norm{Sf}_{L^{2r'}(\R^2)}^{2r}=\norm{Sf^2}_{L^{r'}(\R^2)}^r \le C \norm{F}_{L^r(\R^2)}^r .
		\end{equation*}
		The case $r=1$ follows from $\norm{\widehat{b}_x}_{L^\infty(\R^2)} \leq C$. The case $r=2$ follows from Lemma~\ref{thm:lemma1}. 
		
		After that, to estimate the $L^r(\R^2)$ norm of $F$ for $1\le r<2$, we invert the change of variables $\Phi$,
		\begin{equation} \label{eq:integral}
		\begin{aligned}
		\int_{\Omega} \abs{F(x)}^r dx & = \int_{\Delta(I)}    \abs{f(t  )f(s)}^{r} 
		{\kappa(t)^{\frac{r}{3}}} {\kappa(s)^{\frac{r}{3}}} \abs{\det\begin{pmatrix} z'(t) & z'(s) \end{pmatrix}}^{1-r} \diff t \diff s \\
		& = \int_{\Delta(I)} \abs{f(t  )f(s)}^{r} 
		{\kappa(t)^{\frac{r}{3}}} {\kappa(s)^{\frac{r}{3}}} \abs{\sin(\theta(t) -\theta(s))}^{1-r} \diff t \diff s,
		\end{aligned}
		\end{equation}
		where we define $\theta \colon I \to [0,2 \pi)$ by requiring
		\begin{equation*}
		\begin{pmatrix}	\cos \theta(t) \\ \sin \theta(t) \end{pmatrix} = z'(t).
		\end{equation*}
		
		We split $\Delta(I)$ in the four subsets defined as follows. For $j \in \{1,2,3,4\}$ we define
		\begin{equation*}
		\Delta_{j} \coloneqq  \Big\{ (s,t) \in \Delta(I) \colon \theta(s)-\theta(t) \in \Big[ \frac{(j-1) \pi}{2} , \frac{j \pi}{2} \Big)  \Big\} ,
		\end{equation*}
		and we observe that
		\begin{align*}
			& \text{ for $(s,t) \in \Delta_{1}$,} \qquad \qquad && \sin(\theta(s)-\theta(t)) \geq \frac{1}{2} (\theta(s)-\theta(t)) \geq 0, \\
			& \text{ for $(s,t) \in \Delta_{2}$,} \qquad \qquad && \sin(\theta(s)-\theta(t)) \geq \frac{1}{2} (\pi + \theta(t)-\theta(s)) \geq 0 , \\
			& \text{ for $(s,t) \in \Delta_{3}$,} \qquad \qquad && \sin(\theta(t)-\theta(s)) \geq \frac{1}{2} (\theta(s)-\theta(t)- \pi) \geq 0, \\
			& \text{ for $(s,t) \in \Delta_{4}$,} \qquad \qquad && \sin(\theta(t)-\theta(s)) \geq \frac{1}{2} (2 \pi + \theta(t)-\theta(s)) \geq 0.
		\end{align*}
		We obtain the desired estimate by controlling the portions of the integral in \eqref{eq:integral} in the corresponding subsets separately.
		
		{\bf{Case~I: $(s,t) \in \Delta_{1}$.}} We have
		\begin{equation} \label{eq:auxiliary_1}
		\theta(s)-\theta(t) \geq \int_t^s \kappa (u) \diff u \geq 0.
		\end{equation}
		By the assumption on $I$ made at the beginning of the proof, the function $\Psi_1 \colon \Delta_{1} \to \widetilde{\Delta}_{1} \subseteq [0, 2 \pi)^2$ defined by
		\begin{equation} \label{eq:bijection2}
		\begin{gathered}
		\Psi_1 (s,t) = (\alpha (s), \beta (t)) , \\
			\alpha(s)=\int_{0}^s  \kappa(u) \diff u , \qquad \qquad \beta(t)=\int_{0}^t  \kappa(u) \diff u,
		\end{gathered}
		\end{equation}
		is bijective. Together with the change of variables via the function $\Psi_1$, the inequality in \eqref{eq:auxiliary_1} yields that the portion of the integral in \eqref{eq:integral} on $\Delta_{1}$ is bounded from above by
		\begin{equation*}
		\int_{\widetilde{\Delta}_{1}}    \abs{f(s (\alpha) )}^{r}\abs{f(t(\beta))}^{r} 
		{\kappa(s(\alpha))^{\frac{r}{3}-1}} {\kappa(t(\beta))^{\frac{r}{3}-1}} \abs{\alpha- \beta}^{1-r} \diff \alpha\diff \beta .
		\end{equation*}
		By Hardy-Littlewood-Sobolev inequality, up to a multiplicative constant, the previous display is bounded from above by
		\begin{equation*}
		\norm{\abs{f \circ s}^r ( \kappa \circ s )^{\frac{r}{3}-1}}^2_{L^{\frac{2}{3-r}} (\widetilde{I}, \widetilde{\nu})},
		\end{equation*}
		where $\widetilde{I} = \alpha(I)$ and $\widetilde{\nu}$ is the push-forward to $\widetilde{I}$ via $\alpha$ of the measure $\nu$ on $I$. We change variables via the inverse of the bijective function $\alpha$ defined in \eqref{eq:bijection2}. Up to a multiplicative constant, we obtain the desired estimate for the portion of the integral in \eqref{eq:integral} on $\Delta_{1}$ by
		\begin{equation*}
		\norm{f}^{2r}_{L^{\frac{2r}{3-r}} (I,\nu)}.
		\end{equation*}
				
		{\bf{Case~II: $(s,t) \in \Delta_{2}$.}} For $S_2(t)$ defined by
		\begin{equation*}
		S_2(t) \coloneqq  	\sup \Big\{ u \in J \colon \theta(u) \leq \theta(t)+\pi    \Big\} \geq s,
		\end{equation*}
		we have
		\begin{equation} \label{eq:auxiliary_2}
		\pi + \theta(t)-\theta(s) \geq \int_s^{S_2(t)} \kappa (u) \diff u  \geq 0.
		\end{equation} 
		By the assumption on $I$ made at the beginning of the proof, the function $\Psi_2 \colon \Delta_{2} \to \widetilde{\Delta}_{2} \subseteq [0, 2 \pi)^2$ defined by
		\begin{equation} \label{eq:bijection3}
		\begin{gathered}
			\Psi_2 (s,t) = (\alpha (s,t), \beta (t)) , \\
			\alpha(s,t)=\int_s^{S_2(t)}  \kappa(u) \diff u+\int_{0}^t  \kappa(u) \diff u + \pi, \qquad \qquad \beta(t)=\int_{0}^t  \kappa(u) \diff u,
		\end{gathered}
		\end{equation}
		is bijective. Since the function $S_2$ is increasing then it is differentiable almost everywhere. Therefore, the function $\Psi_2$ is \emph{approximately totally differentiable} almost everywhere in its domain, see Theorem~1 and the following Example in \cite{MR1201446}. Together with the change of variables via the function $\Psi_2$, the inequality in \eqref{eq:auxiliary_2} yields that the portion of the integral in \eqref{eq:integral} on $\Delta_{2}$ is bounded from above by
		\begin{equation*}
		\int_{\widetilde{\Delta}_{2}}    \abs{f(s (\alpha) )}^{r}\abs{f(t(\beta))}^{r} 
		{\kappa(s(\alpha))^{\frac{r}{3}-1}} {\kappa(t(\beta))^{\frac{r}{3}-1}} \abs{\alpha- \beta- \pi}^{1-r} \diff \alpha\diff \beta ,
		\end{equation*}
		where we used the result stated in Theorem~2 in \cite{MR1201446} for changes of variables that are approximately totally differentiable almost everywhere.
		
		As in {\bf{Case~I}}, we conclude by Hardy-Littlewood-Sobolev inequality and the change of variables via the inverse of the bijective function defined in \eqref{eq:bijection3}.
		
		{\bf{Case~III: $(s,t) \in \Delta_{3}$.}} For $S_3(t)$ defined by
		\begin{equation*}
		S_3(t) \coloneqq  	\inf \Big\{ u \in J \colon \theta(u) \geq \theta(t)+\pi    \Big\} \leq s,
		\end{equation*}
		we have
		\begin{equation*}
		\theta(s) - \theta(t) - \pi \geq \int_{S_3(t)}^s \kappa (u) \diff u  \geq 0.
		\end{equation*} 
		We conclude as in {\bf{Case~II}}, with the change of variables via the bijective function $\Psi_3 \colon \Delta_{3} \to \widetilde{\Delta}_{3} \subseteq [0, 2 \pi)^2$ defined by
		\begin{equation*}
		\begin{gathered}
		\Psi_3 (s,t) = (\alpha (s,t), \beta (t)) , \\
		\alpha(s,t)=\int_{S_3(t)}^s  \kappa(u) \diff u+\int_{0}^t  \kappa(u) \diff u + \pi, \qquad \qquad \beta(t)=\int_{0}^t  \kappa(u) \diff u.
		\end{gathered}
		\end{equation*}
		
		{\bf{Case~IV: $(s,t) \in \Delta_{4}$.}} We have
		\begin{equation*}
		2 \pi + \theta(t) - \theta(s) \geq \int_s^{\ell(\Gamma)} \kappa (u) \diff u + \int_0^{t} \kappa (u) \diff u \geq 0.
		\end{equation*} 
		We conclude as in {\bf{Case~I}}, with the change of variables via the bijective function $\Psi_4 \colon \Delta_{4} \to \widetilde{\Delta}_{4} \subseteq [0, 2 \pi)^2$ defined by
		\begin{equation*}
		\begin{gathered}
		\Psi_4 (s,t) = (\alpha (s), \beta (t)) , \\
		\alpha(s) = 2 \pi - \int_s^{\ell(\Gamma)}  \kappa(u) \diff u, \qquad \qquad \beta(t)=\int_{0}^t  \kappa(u) \diff u.
		\end{gathered}
		\end{equation*}
	\end{proof}
	
	Next, we prove the boundedness of the maximal Fourier restriction operator uniformly in the convex curve stated in Theorem~\ref{thm:uniform2}.
	
\begin{proof} [Proof of Theorem~\ref{thm:uniform2}]
	The proof follows a standard argument that we repeat for the sake of completeness. Let $g \in L^\infty(\R^2)$ be a function normalized in $L^\infty(\R^2)$. Let $R$ be a measurable function associating a point in $\Gamma$ to a rectangle centred at the origin with sides parallel to the axes. We consider the linearised maximal Fourier restriction operator $\mathcal{M}_{g,R}$ defined as follows. For every Schwartz function $f \in \mathcal{S}(\R^2)$ we define 
\begin{equation*}
\mathcal{M}_{g,R} \widehat{f}(t)=\int_{\R^2} \widehat{f}(z(t) - y) g(z(t) - y) \abs{R(z(t))}^{-1} 1_{R(z(t))}(y) \diff y.
	\end{equation*}
	We aim at proving boundedness properties for $\mathcal{M}_{g,R}$ with constants independent of the linearising function $R$.
	
	The operator is bounded from $L^1(\R^2)$ to $L^\infty(J, \nu)$. To prove its boundedness properties near $L^{4/3}(\R^2)$, we introduce the bump function
	\begin{equation*}
	a_x(y) \coloneqq \abs{R(x)}^{-1} 1_{R(x)}(y)\overline{g(x-y)},
	\end{equation*}
	and, by Plancherel, we rewrite
	\begin{equation*}
	\mathcal{M}_{g,R} \widehat{f} (t) = \int_{\R^2} \overline{\widehat{a}_{z(t)} (\xi)} e^{2\pi i \xi \cdot
		z(t)}f(\xi) \diff \xi.
	\end{equation*} 
	 The adjoint operator $\mathcal{M}_{g,R}^\ast$ with respect to the $L^1(J, \nu)$-pairing is defined by
	\begin{equation*}
	\mathcal{M}_{g,R}^\ast h(\xi)=\int_{J} \widehat{a}_{z(t)} (\xi)e^{-2\pi i \xi \cdot z(t)}h(t) \diff  \nu(t).
	\end{equation*}
	By Lemma~\ref{thm:lemma2}, for $1 \leq r < 2$ we have
	\begin{equation*}
	\mathcal{M}_{g,R}^\ast \colon L^{\frac{2r}{3-r}}(J, \nu) \to L^{2r'}(\R^2), \qquad \qquad \norm{\mathcal{M}_{g,R}^\ast}_{\op} < \infty,
	\end{equation*}  
	hence, for $1 \leq p < 4/3$ we have the desired result
	\begin{equation*}
	\mathcal{M}_{g,R} \colon L^{p}(\R^2) \to L^{\frac{p'}{3}}(J, \nu), \qquad \qquad \norm{\mathcal{M}_{g,R}}_{\op} < \infty,
	\end{equation*}  
	where $\norm{\cdot}_{\op}$ stands for the norm of the operator and $p' = 2r'$. 
	\end{proof}

	Finally, we prove the corollaries.

\begin{proof} [Proof of Corollary~\ref{thm:uniform}]
	For every function $f \in \mathcal{S}(\R^2)$ we define the function $g$ by
	\begin{equation*}
		g(\xi) = 
		\begin{cases} 
			\displaystyle \frac{\abs{\widehat{f}(\xi)}}{\widehat{f}(\xi)}, \qquad \qquad &\text{if $\widehat{f}(\xi) \neq 0$,} \\
			1, \qquad \qquad &\text{if $\widehat{f}(\xi) =0$.}
			\end{cases}
		\end{equation*}
	In particular, we have 
	\begin{equation*}
	\norm{g}_\infty =1, \qquad \qquad \widehat{f} g = \abs{\widehat{f}}.
	\end{equation*}
	
	Therefore, the function $\mathcal{M}_g \widehat{f}$ dominates the function $\abs{\widehat{f}}$, and the desired result follows from Theorem~\ref{thm:uniform2}.
\end{proof}

\begin{proof} [Proof of Corollary~\ref{thm:lebsegue}]
	The desired result holds true for every function $f \in \mathcal{S}(\R^2)$. 
	
	For $1 \leq p < 4/3$, the desired result for every function $f \in L^p(\R^2)$ follows from a standard approximation argument and the boundedness properties of the maximal operator stated in Theorem~\ref{thm:uniform2}.
\end{proof}

\appendix

\section{Compact convex curves} \label{sec:appendix}

\subsection{Proof of Theorem~\ref{thm:rectifiable}}

First, for every compact convex curve $\Gamma$ we define a continuous parametrization $\gamma$ in Lemma~\ref{thm:gamma}. We achieve this formalizing the following intuition. Let $x_0$ be a point in the bounded open convex set $K \subseteq \R^2$, whose boundary $\partial K$ is $\Gamma$. We parametrize $\Gamma$ by $\mathbb{S}^1$ via the unique intersection between $\Gamma$ and each positive half-line emanating from $x_0$. Moreover, we choose to parametrize $\mathbb{S}^1$ by $[0,2\pi)$ counterclockwise, hence $\Gamma$ too.

\begin{figure}[H]
	\centering
	\begin{tikzpicture}[scale=1]
	\draw[blue, thick] (0.5,0.0) arc (0:360:0.5);
	\draw[blue] (0.5,0.5) node [above]{$\mathbb{S}^1$};
	\draw (0,0) node [below]{$x_0$};
	\draw[red] (1.5,0.75) node [below]{$\tau(e)$};
	\draw (-2,0) node [left] {$\Gamma = \partial K$};
	\draw [thick] plot [smooth cycle, tension=1] coordinates {(-1.5,-1) (2,-0.5) (1.5,1) (-1,1)};
	\draw[red,thick,->] (0,0) -- (2.5,5/3) node [right]{$e$};
	\fill[red] (1.5,1) circle (1.5pt);
	\fill (0,0) circle (1.5pt);
	\end{tikzpicture} 
	\caption{The intuitive parametrization of $\Gamma=\partial K$.}
\end{figure}
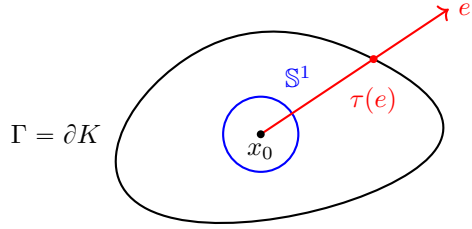

After that, we prove the rectifiability of every compact convex curve $\Gamma = \partial K$ claimed in Theorem~\ref{thm:rectifiable}. The main ingredient in the proof is the inequality between the perimeters of convex polygons $A$ and $B$ such that $A \subseteq B$ stated in Lemma~\ref{thm:polygons}. 

We begin with the definition of the continuous parametrization $\gamma$ for every compact convex curve $\Gamma = \partial K$ outlined above. We first state and prove three auxiliary lemmata.   

\begin{lemma} \label{thm:convexcombination}
	Let $x \in K$, $y \in \partial K$. For every $0 < \lambda \leq 1$ we have $\lambda x + (1-\lambda)y \in K$.
\end{lemma}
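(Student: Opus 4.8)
The plan is to establish the standard fact that the half-open segment joining an interior point of a convex set to a point of its closure, with the closure endpoint excluded, stays inside the set. Since $K$ is open, the ``interior of $K$'' is $K$ itself, and $\partial K \subseteq \bar{K}$; so it suffices to prove the slightly more general statement: if $x \in K$, $y \in \bar{K}$, and $0 < \lambda \leq 1$, then $p_\lambda \coloneqq \lambda x + (1-\lambda) y \in K$. The case $\lambda = 1$ is immediate, since then $p_\lambda = x \in K$, so I would assume $0 < \lambda < 1$ throughout.

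First I would use that $K$ is open at $x$ to fix $r > 0$ with $B(x,r) \subseteq K$, and that $y \in \bar{K}$ to produce, for any $\varepsilon > 0$, a point $y_\varepsilon \in K$ with $\abs{y - y_\varepsilon} < \varepsilon$; the value of $\varepsilon$ will be chosen at the end. The key geometric observation is that the image of $B(x,r)$ under the homothety $w \mapsto \lambda w + (1-\lambda) y_\varepsilon$ is exactly the ball $B\big(\lambda x + (1-\lambda) y_\varepsilon, \lambda r\big)$, and every point of this image has the form $\lambda w + (1-\lambda) y_\varepsilon$ with $w \in B(x,r) \subseteq K$ and $y_\varepsilon \in K$; hence, by convexity of $K$,
\[
B\big(\lambda x + (1-\lambda) y_\varepsilon, \lambda r\big) \subseteq K .
\]

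To finish, I would note that $\abs{p_\lambda - (\lambda x + (1-\lambda) y_\varepsilon)} = (1-\lambda)\abs{y - y_\varepsilon} < (1-\lambda)\varepsilon$, so choosing $\varepsilon \coloneqq \lambda r$ — which is positive because $\lambda, r > 0$ — yields $(1-\lambda)\varepsilon < \lambda r$, and therefore $p_\lambda$ lies in the ball displayed above, hence in $K$. I do not anticipate any real obstacle: the only points needing care are the use of openness of $K$ to obtain $B(x,r)$, the density of $K$ in $\bar{K}$ to approximate $y$ (this is precisely where the hypothesis $\lambda > 0$ is used, since the radius $\lambda r$ of the shifted ball degenerates as $\lambda \to 0$), and the elementary fact that a homothety of ratio $\lambda$ scales radii by $\lambda$.
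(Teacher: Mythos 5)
Your proof is correct and takes essentially the same route as the paper: both arguments approximate $y$ by a nearby point of $K$, exploit the openness of $K$ near $x$, and conclude by convexity. The paper defines a compensating perturbation $x_N$ of $x$ so that $\lambda x_N + (1-\lambda)y_N$ equals $\lambda x + (1-\lambda)y$ exactly, whereas you surround the approximate combination $\lambda x + (1-\lambda)y_\varepsilon$ by a ball of radius $\lambda r$ contained in $K$ and place the target point inside it; these are minor repackagings of the same idea.
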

\begin{proof}
	Fix $0< \lambda \leq 1$. Since $y \in \partial K$, there exists a sequence $\{ y_n \colon n \in \N \} \subseteq K$ converging to $y$. Moreover, the sequence $\{ x_n \colon n \in \N \}$ defined by
	\begin{equation*}
	x_n \coloneqq  x - \frac{1-\lambda}{\lambda} (y - y_n),
	\end{equation*}
	converges to $x$. Therefore, there exists $N$ such that $x_N \in K$, yielding
	\begin{equation*}
	\lambda x +(1-\lambda) y = \lambda x_N + (1-\lambda) y_N \in K.
	\end{equation*}
\end{proof}

\begin{lemma} \label{thm:intersection}
	Let $x_0 \in K$. The function $T=T(x_0)$ defined by
	\begin{equation*}
	T \colon \mathbb{S}^1 \to (0, \infty), \qquad \qquad T(e) \coloneqq  \sup \Big\{ t \geq 0 \colon x_0+te \in K \Big\}, 
	\end{equation*}
	is well-defined. Moreover, for every $e \in \mathbb{S}^1$ we have
	\begin{equation*}
	\partial K \cap \{ x_0+te \colon t \geq 0 \} = \{ x_0+T(e)e \}.
	\end{equation*}
\end{lemma}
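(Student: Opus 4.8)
\textbf{Proof plan for Lemma~\ref{thm:intersection}.}

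The plan is to exploit the boundedness of $K$ for well-definedness, and convexity (through Lemma~\ref{thm:convexcombination}) for the description of the intersection. First I would fix $x_0 \in K$ and $e \in \mathbb{S}^1$ and set $A_e \coloneqq \{ t \geq 0 \colon x_0 + te \in K \}$. Since $x_0 \in K$, we have $0 \in A_e$, so $A_e \neq \emptyset$. Since $K$ is bounded, say $K \subseteq B(0,M)$, every $t \in A_e$ satisfies $t = \abs{(x_0+te) - x_0} \leq 2M$, so $A_e$ is bounded above and $T(e) = \sup A_e$ is a finite nonnegative real number; this shows $T$ is well-defined as a map into $[0,\infty)$. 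To upgrade the codomain to $(0,\infty)$, I use that $K$ is open: there is a ball $B(x_0,\rho) \subseteq K$, whence $\rho/2 \in A_e$ and $T(e) \geq \rho/2 > 0$.

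Next I would prove the set equality. For the inclusion $\supseteq$: the point $x_0 + T(e)e$ lies in $\overline{K}$ (it is a limit of points $x_0 + t_n e \in K$ with $t_n \uparrow T(e)$, such $t_n$ existing by definition of the supremum), and it does not lie in $K$, because if it did, openness of $K$ would give a ball around it inside $K$, producing points $x_0 + te$ with $t > T(e)$ in $K$, contradicting $T(e) = \sup A_e$. Hence $x_0 + T(e)e \in \overline{K} \setminus K = \partial K$, and it clearly lies on the half-line $\{ x_0 + te \colon t \geq 0 \}$. For the inclusion $\subseteq$: suppose $x_0 + te \in \partial K$ with $t \geq 0$. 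If $t < T(e)$, then choosing $t' \in A_e$ with $t < t' \leq T(e)$ (possible since $t$ is below the supremum, unless $t = T(e)$ already, but we assumed $t < T(e)$), we have $x_0 + t'e \in K$ and $x_0 \in K$; writing $x_0 + te$ as a convex combination $\lambda(x_0 + t'e) + (1-\lambda)x_0$ with $\lambda = t/t' \in [0,1)$ shows $x_0 + te$ is a convex combination of two points of the open convex set $K$ with at least one — in fact, by the same openness-plus-convexity argument, one checks $x_0 + te$ is an interior point — so $x_0 + te \in K$, contradicting $x_0 + te \in \partial K$. If $t > T(e)$, then $t \notin A_e$ and moreover $x_0 + te \notin \overline{K}$: indeed, if it were a limit of points of $K$ on the half-line we would contradict $T(e) = \sup A_e$, and using convexity (Lemma~\ref{thm:convexcombination} applied with the point $x_0 + T(e)e \in \partial K$ and $x_0 \in K$ shows the open segment from $x_0$ to $x_0 + T(e)e$ lies in $K$, and any point beyond $T(e)e$ on the ray, were it in $\overline K$, would force $x_0 + T(e)e$ into the interior) one excludes this case as well. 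Hence $t = T(e)$, giving $\subseteq$.

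I expect the main obstacle to be the careful handling of the boundary point in the $\subseteq$ direction: one must rule out $t > T(e)$ cleanly, which is where convexity is genuinely used rather than just openness, and one must be careful that "limit of points of $K$" arguments interact correctly with the one-dimensional restriction to the ray. A clean way to package this is to observe first that $A_e$ is in fact an interval of the form $[0, T(e))$ or $[0,T(e)]$ — convexity of $K$ makes $A_e$ convex in $\R$ — and that, because $K$ is open, $T(e) \notin A_e$, so $A_e = [0,T(e))$ and the ray meets $\overline K$ exactly in $[0,T(e)]e + x_0$; then $\partial K \cap (\text{ray}) = (\overline K \setminus K) \cap (\text{ray})$ is precisely the single point $x_0 + T(e)e$. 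Phrasing it through the convexity of $A_e$ avoids repeated ad hoc arguments and makes the proof short.
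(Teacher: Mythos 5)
Your argument is correct and follows essentially the same route as the paper: bound and open give well-definedness and positivity of $T$, a sequential argument places $x_0+T(e)e$ on the boundary, and Lemma~\ref{thm:convexcombination} rules out any other boundary point on the ray. The only cosmetic differences are that for $t<T(e)$ you take a convex combination of two interior points (so plain convexity of $K$ already suffices, without needing the lemma), whereas the paper applies Lemma~\ref{thm:convexcombination} to $x_0 \in K$ and $x_0+T(e)e \in \partial K$; and for $t>T(e)$ you prove the slightly stronger fact $x_0+te \notin \overline{K}$, whereas the paper contradicts the assumption $x_0+te \in \partial K$ directly by applying Lemma~\ref{thm:convexcombination} with $y = x_0+te$ to conclude $x_0+T(e)e \in K$. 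Your closing repackaging via convexity of $A_e$ is a tidy way to organize the same content.
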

\begin{proof}
	Since $K$ is open and bounded, for every $e \in \mathbb{S}^1$ we have $T(e) \in (0,\infty)$. 
	
	Next, by the definition of $T(e)$, there exists an increasing sequence $\{t_n \colon n \in \N \} \subseteq (0,\infty)$ converging to $T(e)$. Therefore, the sequence $\{ x_0+t_n e \colon n \in \N \} \subseteq K$ converges to $x_0+T(e)e$. Since for $T(e)<t< \infty$ the point $x_0+te \in \R^2 \setminus K$, then $x_0+T(e)e \in \partial K$.

To conclude, suppose there exists $t >0$, $t \neq T(e)$ such that $x_0+te \in \partial K$. 

If $t< T(e)$, by Lemma~\ref{thm:convexcombination} we have $x_0+te \in K$, yielding a contradiction with $x_0+te \in \partial K$. 

If $t> T(e)$, the same argument yields a contradiction with $x_0 +T(e)e \in \partial K$.
\end{proof}
\begin{lemma} \label{thm:intersection_2}
Let $x_0 \in K$. For $T=T(x_0)$ the function $\tau = \tau(x_0)$ defined by
\begin{equation*}
\tau \colon \mathbb{S}^1 \to \partial K \subseteq \R^2, \qquad \qquad \tau(e) \coloneqq x_0+T(e)e,
\end{equation*}
is well-defined and bijective
\end{lemma}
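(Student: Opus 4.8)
The plan is to read off well-definedness directly from Lemma~\ref{thm:intersection}, and then check injectivity and surjectivity by elementary manipulations with the radial structure of the rays emanating from $x_0$.

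For well-definedness: Lemma~\ref{thm:intersection} already asserts that for every $e \in \mathbb{S}^1$ the point $x_0 + T(e)e$ lies in $\partial K \cap \{ x_0 + te \colon t \geq 0 \}$; in particular $\tau(e) = x_0 + T(e)e \in \partial K$, so $\tau$ indeed maps $\mathbb{S}^1$ into $\partial K$. For injectivity: suppose $\tau(e_1) = \tau(e_2)$, so that $T(e_1)e_1 = T(e_2)e_2$. Since $T(e_1), T(e_2) \in (0,\infty)$ by Lemma~\ref{thm:intersection} and $\abs{e_1} = \abs{e_2} = 1$, taking Euclidean norms gives $T(e_1) = T(e_2)$, and then dividing yields $e_1 = e_2$.

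For surjectivity: let $y \in \partial K$. Since $K$ is open, $x_0 \in K$ forces $y \neq x_0$, so $e \coloneqq (y - x_0)/\abs{y - x_0}$ is a well-defined point of $\mathbb{S}^1$ and $y = x_0 + \abs{y - x_0}\, e$ lies on the ray $\{ x_0 + te \colon t \geq 0 \}$. By the identity $\partial K \cap \{ x_0 + te \colon t \geq 0 \} = \{ x_0 + T(e)e \}$ from Lemma~\ref{thm:intersection}, the point $y$, being in both sets, must equal $x_0 + T(e)e = \tau(e)$. Hence $\tau$ is onto.

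I do not expect any genuine obstacle here: essentially all the content is packaged in Lemma~\ref{thm:intersection}, and the only point requiring a moment's care is that $x_0 \notin \partial K$ — needed so that the radial direction attached to a boundary point is well-defined — which is immediate from the openness of $K$.
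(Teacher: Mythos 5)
Your proposal is correct and takes essentially the same approach as the paper: well-definedness and surjectivity are read off from Lemma~\ref{thm:intersection} exactly as in the paper. The only difference is in the injectivity step, where the paper splits into the cases $e_1 \neq -e_2$ (linear independence forces $T(e_1)=T(e_2)=0$) and $e_1 = -e_2$ (sign contradiction), while you take Euclidean norms of $T(e_1)e_1 = T(e_2)e_2$ to get $T(e_1)=T(e_2)$ and then divide; your version is slightly cleaner but uses the same key fact, namely $T(e) > 0$.
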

\begin{proof}
The function is well-defined by Lemma~\ref{thm:intersection}.
	
{\bf{Injective.}} Suppose there exist $e_1,e_2 \in \mathbb{S}^1$, $e_1 \neq e_2$ such that
\begin{equation*}
x_0+T(e_1)e_1=x_0+T(e_2)e_2.
\end{equation*}

If $e_1 \neq -e_2$, they are two linearly independent vectors, hence $T(e_1)=T(e_2)=0$, yielding a contradiction with $T(e_1),T(e_2)>0$. 

If $e_1=-e_2$, then $T(e_1)=-T(e_2)$. Since $T(e_1)>0$, then $T(e_2)<0$, yielding a contradiction with $T(e_2)>0$.

{\bf{Surjective.}} Let $x \in \partial K$ and consider 
\begin{equation*}
e = \frac{x-x_0}{\abs{x-x_0}} \in \mathbb{S}^1.
\end{equation*}
Then $x \in \partial K \cap \{ x_0+te \colon t \geq 0 \}$. By Lemma~\ref{thm:intersection}, we have $x = x_0+T(e)e$.
\end{proof}

The remaining ingredient to define $\gamma$ is the following collection of parametrizations of $\mathbb{S}^1$. 

\begin{definition} \label{def:counterclockwise}
	Let $e \in \mathbb{S}^1 \subseteq \R^2$. We define the \emph{the counterclockwise continuous parametrization $\Theta = \Theta(e)$ of the circle $\mathbb{S}^1$ with starting point $e$} by
		\begin{equation*}
		\Theta \colon [0,2\pi) \to \mathbb{S}^1 \subseteq \R^2, \qquad \qquad \Theta(\theta) \coloneqq \begin{pmatrix}
		\cos \theta & -\sin \theta \\
		\sin \theta & \cos \theta 
		\end{pmatrix} e.
		\end{equation*} 
\end{definition}

In particular, for every $x_1 \in \partial K$ let $\Theta = \Theta(x_1)$ be the counterclockwise continuous parametrization of the circle $\mathbb{S}^1$ with starting point $\tau^{-1}(x_1) \in \mathbb{S}^1$.
\begin{lemma} \label{thm:gamma}
Let $x_0 \in K$, $x_1 \in \Gamma = \partial K$. For $\tau = \tau(x_0)$, $\Theta = \Theta(x_1)$ the function $\gamma = \gamma(x_0,x_1)$ defined by
\begin{equation*}
\gamma \colon [0,2 \pi) \to \Gamma = \partial K \subseteq \R^2, \qquad \qquad \gamma \coloneqq \tau \circ \Theta,
\end{equation*}
is well-defined, bijective and continuous. 
\end{lemma}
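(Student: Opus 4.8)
The plan is to establish the three properties of $\gamma = \tau \circ \Theta$ in turn, leveraging the fact that $\Theta$ is a well-understood homeomorphism and $\tau$ was already shown to be a well-defined bijection in Lemma~\ref{thm:intersection_2}. First I would note that well-definedness and bijectivity are essentially formal: $\Theta(x_1)$ is the composition of a rotation matrix depending continuously on $\theta$ with the fixed unit vector $\tau^{-1}(x_1)$, so $\Theta \colon [0,2\pi) \to \mathbb{S}^1$ is a continuous bijection (a reparametrization of the circle), and $\tau$ is a bijection from $\mathbb{S}^1$ onto $\partial K$ by Lemma~\ref{thm:intersection_2}. Hence the composite $\gamma$ is automatically a well-defined bijection from $[0,2\pi)$ onto $\Gamma = \partial K$. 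The only genuine content is continuity of $\gamma$, and since $\Theta$ is clearly continuous, this reduces to proving that $\tau \colon \mathbb{S}^1 \to \partial K$ is continuous.

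To prove continuity of $\tau$, the key is continuity of the radial function $T = T(x_0) \colon \mathbb{S}^1 \to (0,\infty)$, since $\tau(e) = x_0 + T(e)e$ and the map $(t,e) \mapsto x_0 + te$ is jointly continuous. I would argue that $T$ is both lower and upper semicontinuous. For lower semicontinuity: if $T(e) > t$, then $x_0 + te \in K$ by Lemma~\ref{thm:intersection} (it is an interior point of the segment to the boundary point, so lies in the open set $K$), and since $K$ is open, $x_0 + t e' \in K$ for all $e'$ near $e$, giving $T(e') \geq t$ for nearby $e'$. For upper semicontinuity: suppose $e_n \to e$ but $T(e_n) \to L > T(e)$ along a subsequence; then $x_0 + \tfrac{L+T(e)}{2} e_n \in K$ for large $n$ (since $\tfrac{L+T(e)}{2} < T(e_n)$ eventually), and passing to the limit gives $x_0 + \tfrac{L+T(e)}{2} e \in \bar{K}$; but $\tfrac{L+T(e)}{2} > T(e)$, and by Lemma~\ref{thm:intersection} no point of the ray beyond $\tau(e)$ lies in $K$, while $\bar K = K \cup \partial K$ and such a point cannot be on $\partial K$ either (Lemma~\ref{thm:intersection} pins the boundary intersection at exactly $T(e)$), a contradiction. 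Boundedness of $T$ away from $0$ and $\infty$, already recorded in Lemma~\ref{thm:intersection}, ensures these limits make sense.

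Combining, $T$ is continuous, so $\tau$ is continuous, so $\gamma = \tau \circ \Theta$ is continuous; together with the bijectivity observed above this completes the proof. I expect the main obstacle to be the upper semicontinuity of $T$: one must be slightly careful that the limiting point on the ray, obtained by passing to the limit in $x_0 + t_n e_n \in K$, lands in $\bar{K}$ rather than $K$, and then invoke Lemma~\ref{thm:intersection} to rule out its being strictly beyond the boundary. A clean way to package this is to observe that $K$ being open, bounded and convex forces $T(e) = \sup\{t \geq 0 : x_0 + te \in \bar K\}$ as well, and that the superlevel sets $\{e : T(e) > t\}$ and $\{e : T(e) \geq t\}$ correspond, respectively, to the radial projections of the open and closed convex bodies — both of which behave well under limits. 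All the ingredients are the elementary convexity facts already assembled in Lemmas~\ref{thm:convexcombination}--\ref{thm:intersection_2}.
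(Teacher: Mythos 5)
Your proof is correct and takes essentially the same approach as the paper: both reduce continuity of $\gamma$ to continuity of the radial function $T$ and then argue via sequences using openness and convexity of $K$. Your upper semicontinuity argument is, in substance, the paper's ``$\widetilde{T} > T(\Theta(\theta))$'' case, and your lower semicontinuity argument (via openness of $K$) is a cleaner, more direct rendering of the paper's other case; one small slip is that the claim ``$T(e) > t$ implies $x_0 + te \in K$'' rests on convexity, i.e.\ Lemma~\ref{thm:convexcombination}, rather than on Lemma~\ref{thm:intersection} as you cite.
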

\begin{proof}

The function is well-defined and bijective by Lemma~\ref{thm:intersection_2} and the definition of $\Theta$. The continuity of $\gamma$ follows from that of $\Theta$ and $T \circ \Theta$.

It is enough to prove that the function $T \circ \Theta$ is continuous. We argue by contradiction and we suppose that it has a discontinuity in $\theta$. Let $\{\theta_n \colon n \in \N \}$ be a sequence converging to $\theta$ such that $\{  T(\Theta(\theta_n)) \colon n \in \N \}$ does not converge to $T(\Theta(\theta))$. In particular, there exists $\varepsilon > 0$ and a subsequence $\{ \theta_n \colon n \in M \subseteq \N \} \subseteq \{\theta_n \colon n \in \N \}$ such that 
\begin{equation*}
\inf \Big\{ \abs{ T(\Theta(\theta)) - T(\Theta(\theta_n))} \colon n \in M \Big\} \geq \varepsilon. 
\end{equation*}
Since $K$ is compact, there exists a subsequence $\{ \theta_n \colon n \in \widetilde{M} \subseteq M \} \subseteq \{\theta_n \colon n \in M \}$ such that the limit of $\{ T(\Theta(\theta_n)) \colon n \in \widetilde{M} \}$ exists and is $\widetilde{T} \neq T(\Theta(\theta))$. We distinguish two cases.

{\bf{Case~I: $\widetilde{T} > T(\Theta(\theta))$.}} Fix $t$ such that $\widetilde{T} > t > T(\Theta(\theta))$. The sequence 
\begin{equation*}
\Big\{ x_0+\frac{t}{\widetilde{T}} T(\Theta(\theta_n)) \Theta(\theta_n) \colon n \in \widetilde{M} \Big\} \subseteq K,
\end{equation*}
converges to $x_0+t \Theta(\theta)$. Therefore, we have $x_0+t \Theta(\theta) \in K \cup \partial K$. Then, by the convexity of $K$ and Lemma~\ref{thm:convexcombination}, we have $x_0+T(\Theta(\theta)) \Theta(\theta) \in K$, yielding a contradiction with $x_0+T(\Theta(\theta)) \Theta(\theta) \in \partial K$. 

{\bf{Case~II: $T(\Theta(\theta)) > \widetilde{T}$.}} Fix $t$ such that $ T(\Theta(\theta)) > t > \widetilde{T} $. The sequence 
\begin{equation*}
\Big\{ x_0+\frac{t}{\widetilde{T}} T(\Theta(\theta_n)) \Theta(\theta_n) \colon n \in \widetilde{M} \Big\} \subseteq \R^2\setminus ( K \cup \partial K),
\end{equation*}
converges to $x_0+t \Theta(\theta)$. Therefore, we have $x_0+t \Theta(\theta) \in \R^2 \setminus K$. However, by Lemma~\ref{thm:convexcombination}, we have $x_0+t \Theta(\theta) \in K$, yielding a contradiction.
\end{proof}

We continue with the proof that every compact convex curve $\Gamma = \partial K$ is rectifiable. We first recall the definition of rectifiability.

\begin{definition} \label{def:rectifiability}
Let $\gamma \colon I \to \Gamma \subseteq \R^2$ be a continuous parametrization of a curve, where $I \subseteq \R$ is a bounded interval of either of the following forms
\begin{equation*}
I = [a,b], \qquad I = [a,b), \qquad I = (a,b], \qquad I = (a,b).
\end{equation*}
Let $P=\{P_0,\dots,P_k\}$ be a finite and strictly increasing collection of points in $I$, namely $P_0 < P_1 < \dots < P_k$. Let $\sigma_{\gamma(P)}$ be the polygonal curve given by the segments between $\gamma(P_{i})$ and $\gamma(P_{i+1})$. Let $\ell(\sigma_{\gamma(P)})$ be the length of $\sigma_{\gamma(P)}$ defined by
\begin{equation*}
\ell(\sigma_{\gamma(P)}) \coloneqq \sum_{i=0}^{k-1} \abs{\gamma(P_{i+1})- \gamma(P_i)}.
\end{equation*}
Let $\mathcal{P}$ be the set of all possible finite and strictly increasing collections of points in $I$. The curve $\gamma(I)$ is \emph{rectifiable} if
\begin{equation*}
\ell(\gamma(I)) \coloneqq \sup \Big\{ \ell(\sigma_{\gamma(P)}) \colon P \in \mathcal{P} \Big\} < \infty,
\end{equation*}
and we call $\ell(\gamma(I))$ the \emph{length of $\gamma(I)$}.
\end{definition}

\begin{remark} \label{rmk:remark}
	If $I = [a,b]$, without loss of generality we consider only finite and strictly increasing collections $\{P_0, \dots, P_k\}$ of points in $I$ such that $P_0 = a$, $P_k = b$. 
\end{remark}
Now, for every parametrization $\gamma \colon [0, 2 \pi) \to \Gamma = \partial K$ we define the parametrization $\widetilde{\gamma} \colon [0, 2 \pi] \to \Gamma = \partial K$ by 
\begin{equation*}
\forall t \in [0, 2 \pi), \widetilde{\gamma}(t) \coloneqq  \gamma(t), \qquad \qquad \widetilde{\gamma}(2 \pi) \coloneqq  \gamma(0).
\end{equation*}
In particular, for $\widetilde{\gamma}$ we can apply the observation made in Remark~\ref{rmk:remark}. Moreover, it is straight-forward to observe that $\ell(\gamma([0,2\pi))) = \ell(\widetilde{\gamma}([0,2\pi]))$. Therefore, with a slight abuse of notation, we denote by $\gamma$ also $\widetilde{\gamma}$. 

Moreover, we introduce the auxiliary definition of convex hull we use in the remaining part of the Appendix.
\begin{definition}
Let $Q = \{ Q_1, \dots, Q_k \}$ be a finite collection of points in $\R^2$. The \emph{open convex hull $\ch( Q )$} is defined by
\begin{equation*}
\ch(Q) \coloneqq \Big\{ \sum_{i=1}^k \alpha_i Q_i \colon ( \alpha_1, \dots, \alpha_k ) \in (0,1)^k, \sum_{i=1}^k \alpha_i = 1 \Big\}.
\end{equation*}
\end{definition}

Next, we state and prove three auxiliary lemmata. 

\begin{lemma} \label{thm:points_ch}
Let $x,y \in \Gamma = \partial K$, $x \neq y$. Let $\gamma=\gamma(x) \colon [0,2 \pi] \to \Gamma = \partial K$ be the counterclockwise parametrization such that $\gamma(0)=x$. Let $s \in (0,2 \pi)$ be such that $\gamma(s)=y$. Then the two pieces $\gamma((0,s))$ and $\gamma((s,2\pi))$ of the curve $\Gamma$ are in the closure of the distinct half-planes defined by the line $l$ passing through $x$ and $y$.
\end{lemma}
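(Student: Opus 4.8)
The plan is to prove Lemma~\ref{thm:points_ch} by a straightforward convexity argument: the line $l$ through $x$ and $y$ splits the plane into two open half-planes $H^+$ and $H^-$, and I want to show that $\gamma(0,s)$ lies entirely in one of them and $\gamma(s,2\pi)$ entirely in the other. First I would observe that neither arc can meet the line $l$ itself: if some $\gamma(u) \in l$ with $u \in (0,s)$ (and $\gamma(u) \neq x,y$), then $\gamma(u)$ lies strictly between $x$ and $y$ on the segment $[x,y]$ or strictly outside it on the line. In the first case $\gamma(u)$ would be a convex combination of $x,y \in \partial K \subseteq \bar K$, hence $\gamma(u) \in K$ by Lemma~\ref{thm:convexcombination} (applied with one endpoint playing the role of the interior-adjacent point, after noting $[x,y]\setminus\{x,y\}$ meets $K$ since $K$ is open and convex with $x,y$ on its boundary), contradicting $\gamma(u) \in \partial K$. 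In the second case, $x$ or $y$ would be a convex combination of $\gamma(u)$ and the other endpoint, all of which lie in $\bar K$, again forcing an interior point to be on $\partial K$ — contradiction. So each arc avoids $l$ and hence, being connected (continuous image of an interval), lies within a single open half-plane.

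Next I would rule out that both arcs lie in the \emph{same} half-plane, say $H^+$. In that case the entire curve $\Gamma = \gamma([0,2\pi]) = \{x,y\} \cup \gamma(0,s) \cup \gamma(s,2\pi)$ would be contained in the closed half-plane $\overline{H^+}$, with the only points of $\Gamma$ on $l$ being $x$ and $y$. But then $K$, which is the bounded open convex set whose boundary is $\Gamma$, would be contained in $\overline{H^+}$ as well: indeed $\bar K = K \cup \Gamma$ is the closed convex hull of $\Gamma$ (for a bounded convex open set, $\bar K$ equals the closed convex hull of its boundary), and the closed convex hull of a set inside a closed half-plane stays inside that closed half-plane. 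Now take a point $p$ in the relative interior of the segment $[x,y]$; since $x,y \in \partial K$ and $K$ is open and convex, the segment $(x,y)$ lies in $K$, so $p \in K$. But $p \in l$, so $p \in K \cap l \subseteq \overline{H^+} \cap l = l$, and $p$ is an interior point of $K$ lying on the boundary line, which is fine — the contradiction instead comes from considering a small ball $B(p,r) \subseteq K$: this ball meets the open half-plane $H^-$, contradicting $K \subseteq \overline{H^+}$. Hence the two arcs lie in opposite half-planes.

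The main obstacle, and the only place requiring care, is justifying that $K \subseteq \overline{H^+}$ from $\Gamma \subseteq \overline{H^+}$ — i.e., that a bounded convex open set is controlled by its boundary. The clean way is: for any bounded open convex $K$, $\bar K$ is compact and convex, so $\bar K = \ch(\partial K)$-closure is not literally true in general, but the supporting-hyperplane / separation argument works directly: if some $q \in K$ had $q \in H^-$, then the segment from $q$ in the direction away from $l$ stays in $K$ until it hits $\partial K = \Gamma$, and that hitting point would lie strictly in $H^-$ (since we started in $H^-$ and moved further from $l$), contradicting $\Gamma \subseteq \overline{H^+}$. This only uses that a ray from an interior point of a bounded open convex set exits through exactly one boundary point, which is exactly Lemma~\ref{thm:intersection} (applied with basepoint $q$). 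I would therefore phrase the whole argument using Lemma~\ref{thm:convexcombination} and Lemma~\ref{thm:intersection} as the two workhorses, avoiding any appeal to separating-hyperplane theorems beyond what those lemmas already give.

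One should also double-check the degenerate configurations: if the segment $[x,y]$ happens to lie along a flat piece of $\Gamma$, then some boundary points \emph{do} lie on $l$ beyond $x,y$; but in that case those points belong to one of the two arcs by Remark~\ref{rmk:remark}'s convention, and the argument above shows the open arcs still avoid the interior of $K$ and split between the closed half-planes $\overline{H^+}$, $\overline{H^-}$ — the statement as written only claims the two arcs lie in ``distinct half-planes defined by $l$'', which I read as: not both in the same open half-plane, equivalently each in its own closed half-plane with the separation being strict off the flat part. I expect the write-up to be short, with Lemma~\ref{thm:convexcombination} handling ``arcs avoid $K$-interior-side contradictions'' and Lemma~\ref{thm:intersection} handling ``$K$ cannot escape $\overline{H^+}$''.
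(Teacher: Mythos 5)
Your approach is genuinely different from the paper's, but it has a real gap. The paper anchors at a fixed interior point $x_0 \in K$, works in the radial parametrization $\gamma = \tau \circ \Theta$, splits into three cases according to the size of $s$ relative to $\pi$, and in each case shows that $\gamma(u)$ cannot lie in the regions $C$, $A$, $B$ because that would force an open convex hull of boundary points together with $x_0$ to cover a point of $\partial K$. You instead try to separate directly by $l$: show the open arcs avoid $l$, invoke connectedness to place each arc in one open half-plane, then rule out that they share a half-plane.

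The gap is in the first step. You claim $\gamma(u) \in (x,y)$ forces $\gamma(u) \in K$ via Lemma~\ref{thm:convexcombination}, but that lemma requires one of its two points to lie in the \emph{open} set $K$; here both $x$ and $y$ are on $\partial K$. Your parenthetical patch --- ``$[x,y]\setminus\{x,y\}$ meets $K$'' --- is exactly the assertion you would need to prove, and it is false in general: if $\partial K$ contains a flat segment on $l$ with $[x,y]$ inside it, then $(x,y) \subseteq \partial K$ and misses $K$ entirely. The second sub-case has the same defect: a strict convex combination of two points of $\partial K$ lies only in $\overline{K}$, not necessarily in $K$, so there is no ``interior point forced onto the boundary.'' As a result, each arc need not lie in a single \emph{open} half-plane, and once an arc may touch $l$, connectedness alone no longer excludes an arc \emph{crossing} $l$ and visiting both open half-planes. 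Your closing paragraph correctly flags the flat case, but the sketched repair does not supply the missing ``no crossing'' step; that is precisely what the paper's use of $x_0$ and the inclusions $\gamma(u) \in \ch(x,y,x_0) \subseteq K$, $x \in \ch(\gamma(u),y,x_0) \subseteq K$ accomplish. The second half of your argument --- using Lemma~\ref{thm:intersection} from an interior basepoint to rule out $\Gamma \subseteq \overline{H^+}$ with both arcs on the same side --- is sound and could be reused once the first step is repaired.
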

\begin{proof}
Let $x_0 \in K$. Let $l_x$ be the half-line emanating from $x_0 $ and passing through $x$, and $l_y$ the half-line emanating from $x_0$ and passing through $y$. We distinguish three cases.

{\bf{Case~I: $s=\pi$.}} Then $l_x,l_y \subseteq l$, and the statement is satisfied.

{\bf{Case~II: $s<\pi$.}} In particular, $x_0 \notin l$. Let $H_0$ be the open half-plane such that $\partial H_0 = l$ and $x_0 \in H_0$. The piece $\gamma((0,s))$ of the curve $\Gamma$ is in the section of the plane defined by the counterclockwise angle from $l_x$ to $l_y$. We claim that $\gamma((0,s)) \subseteq H_0^c$. We argue by contradiction and we suppose that there exists $0<u<s$ such that $\gamma(u)$ belongs to the open subset $C = \ch( x,y,x_0 ) \subseteq K$. Then $\gamma(u) \in K$, yielding a contradiction with $\gamma(u) \in \partial K$.

Let $\Pi$ be the open section of the plane defined by the counterclockwise angle from $l_y$ to $l_x$. Let $A$ and $B$ be the connected open subsets of the plane such that $A \cap B = \varnothing$, $A \cup B = \Pi \cap ( H_0 \cup \partial H_0)^c$, $x \in \partial A$ and $y \in \partial B$. The piece $\gamma((s,2\pi))$ of the curve $\Gamma$ is in the set $\Pi$. We claim that $\gamma((s,2\pi)) \subseteq H_0 \cup \partial H_0$. We argue by contradiction and we suppose that there exists $s<u<2 \pi$ such that $\gamma(u)$ belongs to either of the subsets $A$ and $B$. Without loss of generality, we assume $\gamma(u) \in A$. Then $x \in \ch ( \gamma(u),y,x_0 ) \subseteq K$, yielding a contradiction with $x \in \partial K$.

{\bf{Case~III: $s >\pi$.}} We proceed as in {\bf{Case~II}}, switching the arguments for the two subcases.
\end{proof}

\begin{figure}[H]
	\centering
	\begin{tikzpicture}[scale=1]
	\draw[red,thick] plot [smooth, tension=0.5] coordinates {(3.5,0) (1.52,0.88) (0,1.2) (-1.13,0.660) (-2,-0.7)};
	\draw[thick,->] (0,0) -- (30:3.5cm) node [above]{$l_x$};
	\draw[thick,->] (0,0) -- (150:3cm) node [above]{$l_y$};
	\draw[thick] (-3,0.5) -- (3,1) node [above,right]{$l$};
	\draw[red,thick] (-0.17,0.1) arc (150:390:0.2);
	\draw[blue,thick] (0.17,0.1) arc (30:150:0.2);
	\draw[red] (5,0) node {$\Gamma=\partial K$};
	\draw (0,0.25) node [above]{$C$};
	\draw (-2.5,0.875) node {$B$};
	\draw (2.75,1.2) node {$A$};
	\draw (0.25,0) node [below,right]{$x_0$};
	\draw (-1,0.5) node [below]{$y$};
	\draw (1.5,0.75) node [below]{$x$};
	\fill (0,0) circle (1.5 pt);
	\fill (1.52,0.88) circle (1.5 pt);
	\fill (-1.13,0.660) circle (1.5 pt);
	\end{tikzpicture} 
	\caption{The open subsets $A,B,C$ in {\bf{Case~II}}.}
\end{figure}
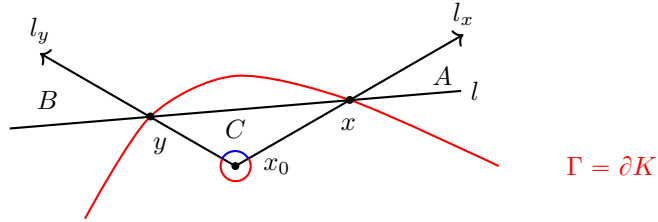

\begin{lemma} \label{thm:boundary_polygon}
	Let $\gamma \colon [0,2\pi] \to \Gamma \subseteq \R^2 $ be a parametrization of a compact convex curve $\Gamma$. Let $P = \{ P_0, \dots, P_k \}$ be a finite and strictly increasing collection of points in $[0,2\pi]$ such that $P_0 = 0$, $P_k = 2 \pi$. Then the open convex hull $\ch(\gamma(P))$ is an open convex polygon, and $\partial \ch(\gamma(P)) = \sigma_{\gamma(P)}$.
\end{lemma}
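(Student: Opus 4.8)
The plan is to show two things: first, that $\ch(\gamma(P))$ is an open convex polygon whose vertices are a subset of the points $\gamma(P_0), \dots, \gamma(P_{k-1})$ (note $\gamma(0) = \gamma(2\pi)$, so $\gamma(P_k)$ is redundant), and second, that its boundary is exactly the polygonal curve $\sigma_{\gamma(P)}$. The first assertion is essentially immediate: the open convex hull of finitely many points is by definition the set of strict convex combinations, and a standard fact from convexity theory is that this coincides with the interior of the (closed) convex hull, which is a convex polygon whose extreme points form a subset of the generating points. The only content specific to our situation is that we should remark the $\gamma(P_i)$ are pairwise distinct (the parametrization is bijective on $[0,2\pi)$) and that $\ch(\gamma(P))$ is nonempty with nonempty interior, which follows because $\Gamma$ is not contained in a line — indeed $K$ is open and nonempty, so $\Gamma = \partial K$ spans $\R^2$, and for a sufficiently fine partition at least three of the $\gamma(P_i)$ are affinely independent. (If one wants the statement verbatim for every partition $P$, one reduces to a fine common refinement, or simply notes that if $\ch(\gamma(P))$ is degenerate the claimed identity is trivial or vacuous.)

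The substantive part is identifying $\partial \ch(\gamma(P))$ with $\sigma_{\gamma(P)}$. First I would note that $\partial\ch(\gamma(P))$ is a closed convex polygonal curve whose vertices are among $\gamma(P_0), \dots, \gamma(P_{k-1})$, and whose vertices appear along it in some cyclic order. The claim is that this cyclic order is precisely $P_0 < P_1 < \dots < P_{k-1}$, i.e.\ the order induced by the parametrization, and moreover that no $\gamma(P_i)$ is skipped (each is actually a vertex, so that the edges of $\partial\ch(\gamma(P))$ are exactly the segments $[\gamma(P_i), \gamma(P_{i+1})]$). For the ordering: take two consecutive partition points $P_i, P_{i+1}$; by Lemma~\ref{thm:points_ch} (applied with $x = \gamma(P_i)$, $y = \gamma(P_{i+1})$) the two arcs $\gamma(P_i, P_{i+1})$ and $\gamma(P_{i+1}, \dots, P_i)$ (the complementary arc) lie in the two distinct open half-planes determined by the line through $\gamma(P_i)$ and $\gamma(P_{i+1})$. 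In particular all the other partition points $\gamma(P_j)$, $j \neq i, i+1$, lie (weakly) on one side of this line, so the segment $[\gamma(P_i), \gamma(P_{i+1})]$ is a supporting segment of $\ch(\gamma(P))$ — hence an edge of the polygon $\partial\ch(\gamma(P))$, provided $\gamma(P_i) \neq \gamma(P_{i+1})$, which holds by injectivity. Running this over all $i = 0, \dots, k-1$ produces $k$ edges whose union is exactly $\sigma_{\gamma(P)}$; since a convex polygon with vertices among $k$ given points has at most $k$ edges and these $k$ segments already form a closed curve enclosing $\ch(\gamma(P))$, they must constitute the entire boundary.

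The main obstacle — really the only place care is needed — is the possibility of collinear partition points, i.e.\ three consecutive $\gamma(P_{i-1}), \gamma(P_i), \gamma(P_{i+1})$ lying on a common line, which happens exactly when $\Gamma$ contains a line segment through these points (by convexity of $\Gamma$, collinearity of boundary points forces the segment between them to lie in $\Gamma$). In that case $\gamma(P_i)$ is not an extreme point of $\ch(\gamma(P))$ and so is not a vertex of the polygon in the strict sense; however, it still lies on the edge $[\gamma(P_{i-1}), \gamma(P_{i+1})]$, and crucially $|\gamma(P_{i-1}) - \gamma(P_{i+1})| = |\gamma(P_{i-1}) - \gamma(P_i)| + |\gamma(P_i) - \gamma(P_{i+1})|$, so the polygonal curve $\sigma_{\gamma(P)}$ (which uses $\gamma(P_i)$ as a breakpoint) still coincides as a set with $\partial\ch(\gamma(P))$, and $\ell(\sigma_{\gamma(P)})$ is unaffected. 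I would handle this by phrasing the supporting-line argument with weak inequalities throughout (the half-planes in Lemma~\ref{thm:points_ch} are open, so the other partition points lie in the closed half-planes), which automatically accommodates the degenerate collinear configurations, and then remark that collapsing collinear runs changes neither the set $\partial\ch(\gamma(P))$ nor the curve $\sigma_{\gamma(P)}$. This gives $\partial\ch(\gamma(P)) = \sigma_{\gamma(P)}$ as claimed.
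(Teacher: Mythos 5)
Your proof is correct and rests on the same key step as the paper's: apply Lemma~\ref{thm:points_ch} to each consecutive pair $\gamma(P_i),\gamma(P_{i+1})$ to place all other partition points in one closed half-plane, so that each segment $[\gamma(P_i),\gamma(P_{i+1})]$ is a supporting segment and hence lies on $\partial \ch(\gamma(P))$. The only difference is that you spell out the degenerate situations (collinear partition points, a lower-dimensional hull) that the paper's three-sentence proof leaves implicit; this extra care is welcome but does not change the route.
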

\begin{proof}
Consider the segment between $\gamma(P_j)$ and $\gamma(P_{j+1})$. By Lemma~\ref{thm:points_ch}, all the points in $\gamma(P)$ are in the same closed half-plane defined by the line passing through $\gamma(P_j)$ and $\gamma(P_{j+1})$. Therefore, the open convex hull $\ch(\gamma(P) )$ is in the same closed half-plane, and the segment between $\gamma(P_j)$ and $\gamma(P_{j+1})$ belongs to the boundary $\partial \ch( \gamma(P) )$.
\end{proof}
\begin{lemma} \label{thm:polygons}
Let $A,B$ be two convex polygons such that $A \subseteq B$. Then
\begin{equation*}
\ell(\partial A) \leq \ell(\partial B).
\end{equation*}
\end{lemma}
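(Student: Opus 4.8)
The plan is to prove the monotonicity of perimeter under inclusion of convex polygons by a repeated ``cutting'' argument, reducing the general case to the elementary fact that cutting a convex polygon with a line does not increase its perimeter. First I would set up the following sub-claim: if $C$ is a convex polygon and $H$ is a closed half-plane, then $\ell(\partial(C \cap H)) \leq \ell(\partial C)$. To see this, note that $C \cap H$ is again a convex polygon; its boundary consists of those edges (or parts of edges) of $\partial C$ that lie in $H$, together with at most one new segment $[p,q]$ contained in the bounding line of $H$, where $p, q \in \partial C$. The portion of $\partial C$ that is discarded is a connected polygonal arc from $p$ to $q$ lying outside $H$, and by the triangle inequality (applied along the vertices of that arc) its length is at least $\abs{q - p}$, the length of the replacement segment. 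Hence the perimeter does not increase.

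Next I would observe that since $A \subseteq B$ and both are convex polygons, $A$ can be obtained from $B$ as a finite intersection of half-planes: writing $A = \bigcap_{i=1}^k H_i$ where the $H_i$ are the half-planes bounded by the lines containing the edges of $A$, we have $A = B \cap H_1 \cap \dots \cap H_k$ because $A \subseteq B$ forces $B \cap \bigcap_i H_i = A$. Then I would apply the sub-claim $k$ times along the chain
\begin{equation*}
B \supseteq B \cap H_1 \supseteq B \cap H_1 \cap H_2 \supseteq \dots \supseteq B \cap H_1 \cap \dots \cap H_k = A,
\end{equation*}
each set in the chain being a convex polygon (a finite intersection of the convex polygon $B$ with half-planes), concluding $\ell(\partial A) \leq \ell(\partial B)$.

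I expect the main obstacle to be the careful bookkeeping in the sub-claim: verifying that $\partial C \setminus H$ really is a single connected polygonal arc with the same endpoints as the new segment, and handling degenerate configurations (the cutting line passing through a vertex of $C$, the line meeting $\partial C$ in a whole edge, or $C \cap H$ being lower-dimensional or empty, in which case the inequality is trivial). These are routine convexity facts — a line meets the boundary of a convex polygon in either a point, a segment, or two points — so once they are stated cleanly the triangle-inequality estimate is immediate. An alternative, even softer route would be to invoke Cauchy's formula expressing the perimeter of a convex body as an average of widths of its projections onto lines, together with the obvious monotonicity of projections under inclusion; but the elementary cutting argument is self-contained and fits the appendix's level, so I would present that.
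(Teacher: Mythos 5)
Your proposal is correct and takes essentially the same approach as the paper: both prove the result by iteratively intersecting $B$ with the closed half-planes bounded by the lines through the edges of $A$, using the triangle inequality to show that each such cut does not increase the perimeter. The paper packages this as an induction on the number of sides of $\partial A$ not already contained in $\partial B$, while you state the one-cut estimate as a sub-claim and apply it along the chain $B \supseteq B\cap H_1 \supseteq \cdots \supseteq A$; the mathematical content is the same.
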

\begin{proof}
We prove the claim by induction on the number $n$ of sides of $\partial A$ that are not contained in $\partial B$. If $n=0$, then $A=B$ and the desired inequality is satisfied. 

Next, suppose that there are $n \geq 1$ sides of $\partial A$ that are not contained in $\partial B$. We choose one, we draw the line $l$ defined by it, and we let $H$ be the closed half-plane defined by $l$ containing $A$. Then $C= B \cap H$ is a convex polygon and, by triangle inequality, we have
\begin{equation*}
\ell(\partial C) \leq \ell(\partial B).
\end{equation*}
We observe that there are $n-1$ sides of $\partial A$ that are not contained in $\partial C$. Therefore, by induction hypothesis, we obtain the desired inequality.
\end{proof}
\begin{figure}[H]
	\centering
	\begin{tikzpicture}[scale=1]
	\draw [thick]  (-4,-1.2) -- (3,-1.2) -- (1,2.5) -- cycle;
	\draw [thick] (-1,-0.5) -- (-0.5,1) -- (1,0.75) -- (1.25,-0.75) -- cycle;
	\draw [red,thick] plot [smooth cycle, tension=0.8] coordinates {(-1,-0.5) (-0.5,1) (1,0.75) (1.25,-0.75)};
	\draw [thick,blue] (-3.5,1.5) -- (4,0.25) node [right,below] {$l$};
	\draw [blue] (-3,1) node [left,below] {$H$};
	\draw [red] (-1.2,-0.5) node [left] {$\Gamma=\partial K$};
	\draw (1,0) node [left] {$\partial A$};
	\draw (2,1.5) node [right] {$\partial B$};
	\end{tikzpicture} 
	\caption{The inductive step.}
\end{figure}
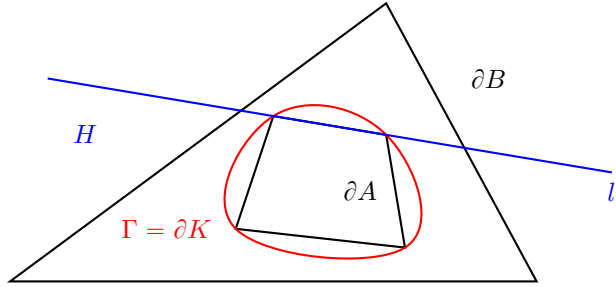

\begin{proof}[Proof of Theorem~\ref{thm:rectifiable}]
Let $B(0,R)$ be a ball centred at the origin with radius $R$ containing $K$. Let $\Delta$ be an equilateral triangle containing $B(0,R)$. 

By Lemma~\ref{thm:boundary_polygon}, for every finite and strictly increasing collection $P = \{P_0, \dots, P_k\}$ of points in $[0,2\pi]$ such that $P_0 = 0$, $P_k = 2 \pi$ the open convex hull $\ch( \gamma(P) )$ is an open convex polygon contained in $\Delta$. Moreover, we have $\sigma_{\gamma(P)}= \partial \ch ( \gamma(P) )$. 

By Lemma~\ref{thm:polygons}, we have
\begin{equation*}
\ell(\gamma(I)) \coloneqq \sup \Big\{ \ell(\sigma_{\gamma(P)}) \leq \ell(\partial \Delta) \colon P \in \mathcal{P} \Big\} < \infty. 
\end{equation*} 
\end{proof}

\begin{remark} \label{rmk:increasing_composition}
Let $x_0 \in K$, $x_1 \in \Gamma = \partial K$. Let $\gamma = \gamma(x_0, x_1) \colon [0,2\pi) \to \Gamma$ be the counterclockwise parametrization defined in Lemma~\ref{thm:gamma}. Let $z = z(x_1) \colon [0,\ell(\Gamma)) \to \Gamma$ be the counterclockwise affine arclength parametrization defined by
\begin{equation*}
z(0) = x_1.
\end{equation*}
The function $\gamma^{-1}\circ z$ is strictly increasing, because both $\gamma$ and $z$ are counterclockwise parametrizations.
\end{remark}

\subsection{Proofs of Theorem~\ref{thm:derivative_z}, Theorem~\ref{thm:derivative_z'}, and Theorem~\ref{thm:kappa_zeta''}}

We introduce two auxiliary functions $\theta_l$ and $\theta_r$ defined geometrically in every point of the convex curve $\Gamma = \partial K$ by the minimal cone centred at the point and containing the convex set $K$. These functions are strictly related to the left and right derivatives of the arclength parametrization $z$ of $\Gamma$, and are helpful in proving the desired theorems.

\begin{definition} \label{def:cone}
	Let $x$ be a point in $\Gamma = \partial K$. The \emph{cone $E_x$} is defined by
	\begin{equation*}
	E_x \coloneqq \Big\{ e \in \mathbb{S}^1 \colon \{ x+te \colon t > 0 \} \cap \partial K \neq \varnothing \Big\}.
	\end{equation*}
\end{definition}

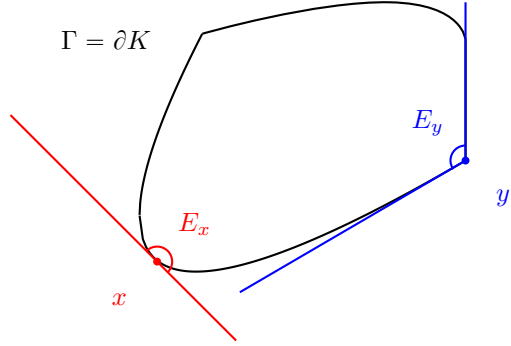
\begin{figure}[H]
	\centering
	\begin{tikzpicture}[scale=1]
	\draw[thick] plot[samples=100,domain=-1/3:0.5] (\x, {\x+1+sqrt(3*\x+1)});
	\draw[thick] plot[samples=100,domain=-1/3:4] (\x, {\x+1-sqrt(3*\x+1)});
	\draw[thick] (4,5-sqrt{13}) -- (4,3);
	\draw [thick] plot [smooth, tension=1] coordinates {(4,3) (3,3.5) (0.5,1.5+sqrt{2.5})};
	\draw[red,thick] (-2.05,2) -- (0.95,-1);
	\draw[blue,thick] (4,5-sqrt{13}) -- (4,3.5);
	\draw[blue,thick] (4,5-sqrt{13}) -- (1,2-8.5/sqrt{13});
	\draw (-1.5,3) node [right] {$\Gamma = \partial K$};
	\draw[red,thick] ([shift=(-45:0.2)]-0.1,0.05) arc (-45:135:0.2);
	\draw[blue,thick]([shift=(90:0.2)]4,5-sqrt{13}) arc (90:215:0.2); \draw[red] (-0.6,-0.45) node {$x$};
	\draw[blue] (4.5,4.5-sqrt{13}) node {$y$};
	\draw[red] (0.4,0.55) node {$E_x$};
	\draw[blue] (3.5,5.5-sqrt{13}) node {$E_y$};
	\fill[red] (-0.1,0.05) circle (1.5 pt);
	\fill[blue] (4,5-sqrt{13}) circle (1.5 pt);
	\end{tikzpicture} 
	\caption{Two instances of $E_x$.}
\end{figure}	

\begin{lemma} \label{thm:rlangle_1}
	For every $x \in \Gamma = \partial K$ we have $\mathbb{S}^1 \setminus E_x \neq \varnothing$. 
\end{lemma}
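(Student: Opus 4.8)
The plan is to show that for every boundary point $x \in \Gamma = \partial K$ there is a direction $e \in \mathbb{S}^1$ such that the ray $\{ x + te \colon t > 0 \}$ misses $\partial K$ entirely, so that $e \notin E_x$. The natural choice is a direction pointing ``away'' from the convex body, i.e. essentially the reverse of a direction pointing into $K$. First I would fix an interior point $x_0 \in K$ (which exists since $K$ is a non-empty open set) and set
\begin{equation*}
e \coloneqq \frac{x - x_0}{\abs{x - x_0}} \in \mathbb{S}^1,
\end{equation*}
which is well-defined because $x \in \partial K$ and $x_0 \in K$ force $x \neq x_0$ (as $K$ is open, $\partial K \cap K = \varnothing$).

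Next I would argue that this $e$ lies outside $E_x$, that is, $\{ x + te \colon t > 0 \} \cap \partial K = \varnothing$. Suppose for contradiction that $x + t_1 e \in \partial K$ for some $t_1 > 0$. Writing $x = x_0 + s e$ with $s = \abs{x - x_0} > 0$, the point $x + t_1 e = x_0 + (s + t_1)e$ lies on the ray from $x_0$ through $x$, strictly beyond $x$. By Lemma~\ref{thm:intersection} (applied with the interior point $x_0$), the unique point of $\partial K$ on the half-line $\{ x_0 + te \colon t \geq 0 \}$ is $x_0 + T(e)e$; since both $x = x_0 + se$ and $x + t_1 e = x_0 + (s+t_1)e$ would have to equal this unique point, we get $s = s + t_1$, contradicting $t_1 > 0$. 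Hence no such $t_1$ exists, so $e \in \mathbb{S}^1 \setminus E_x$, proving the latter set is non-empty.

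I do not expect any genuine obstacle here: the statement is essentially a restatement of the uniqueness half of Lemma~\ref{thm:intersection}, phrased from the vantage point of the boundary rather than the interior point. The only mild care needed is to make sure the interior point $x_0$ exists (immediate, since $K$ is a non-empty \emph{open} convex set) and that $x$ itself is the relevant boundary intersection $\tau(e) = x_0 + T(e)e$, which follows from the surjectivity argument in Lemma~\ref{thm:intersection_2} or directly from Lemma~\ref{thm:intersection}. If one prefers to avoid invoking $T$ and $\tau$, one can instead use Lemma~\ref{thm:convexcombination}: any point $x + t_1 e$ with $t_1 > 0$ on that ray has $x$ strictly between $x_0$ and $x + t_1 e$, so $x$ would be a proper convex combination of the interior point $x_0$ and a point of $\overline{K}$, forcing $x \in K$ by Lemma~\ref{thm:convexcombination} and contradicting $x \in \partial K$. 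Either route gives the conclusion in a couple of lines.
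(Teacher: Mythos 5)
Your proof is correct, and it takes a genuinely different route from the paper's. The paper argues by contradiction: assuming $E_x = \mathbb{S}^1$, it picks three directions $120^\circ$ apart, obtains three corresponding boundary points $y_1,y_2,y_3 \in \partial K$, and observes that $x$ lies in the open convex hull $\ch(y_1,y_2,y_3)$, which (as the interior of a triangle with vertices in $\overline{K}$, none of whose sides contain $x$) sits inside $K$, contradicting $x \in \partial K$. Your argument is constructive rather than by exhaustion of directions: you exhibit the specific direction $e = (x-x_0)/\abs{x-x_0}$ pointing from an interior point $x_0$ toward $x$, and you observe that the ray $\{x + te : t > 0\}$ is precisely the portion of the ray $\{x_0 + te : t \geq 0\}$ strictly beyond its unique boundary intersection, so Lemma~\ref{thm:intersection} (or, in your second variant, Lemma~\ref{thm:convexcombination} applied to $x$ as a proper convex combination of $x_0 \in K$ and a putative further boundary point) rules out any further hits on $\partial K$. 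Both are short, but yours leans only on results already established earlier in the appendix and does not need the auxiliary fact that the open convex hull of three non-collinear boundary points is contained in $K$, which the paper uses implicitly without spelling out a justification (one typically needs a supporting-line argument for it). The paper's approach has the small advantage of not privileging any particular interior point, but for this lemma that buys nothing, since an interior point $x_0$ is already fixed throughout the surrounding development.
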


\begin{proof}	
	We argue by contradiction and we suppose that $E_x= \mathbb{S}^1$. We fix any arbitrary counterclockwise parametrization $\Psi \colon [0, 2 \pi) \to \mathbb{S}^1$ as in Definition~\ref{def:counterclockwise}. Let $y_1,y_2,y_3 \in \partial K$ be the points corresponding to the directions $e_1 = \Psi(\pi/3)$, $ e_2 = \Psi(\pi)$, and $e_3 = \Psi(5 \pi/3)$. Therefore, we have $x \in \ch(y_1,y_2,y_3)  \subseteq K$, yielding a contradiction with $x \in \partial K$.
\end{proof}

The previous result guarantees that the following definition is meaningful. For every $x \in \Gamma = \partial K$ let $e_0 = e_0(x) \in \mathbb{S}^1 \setminus E_x$. Moreover, let $\Phi = \Phi(e_0) \colon [0, 2 \pi) \to \mathbb{S}^1$ be the counterclockwise parametrization of the circle with starting point $e_0$ as in Definition~\ref{def:counterclockwise}. 

\begin{lemma} \label{thm:rlangle_2}
	For every $x \in \Gamma = \partial K$ we have that $\Phi^{-1}(E_x)$ is an interval with extremal points $a,b \in [0,2\pi)$ satisfying 
	\begin{equation} \label{eq:relation}
	a < b \leq a +\pi.
	\end{equation}
\end{lemma}
\begin{proof}
	Let $\theta_1, \theta_2 \in \Phi^{-1} (E_x)$ such that $\theta_1 < \theta_2$. We claim that for every $\theta \in [0, 2 \pi)$, $\theta_1 < \theta < \theta_2$ we have $e = \Phi(\theta) \in E_x$. 
	
	By the definition of $\Phi$, we have $\theta_1 \neq 0$ and $\theta_2 \neq 2 \pi$. Now, let $e_1, e_2 \in \mathbb{S}^1$ be defined by 
	\begin{equation*}
	e_1= \Phi(\theta_1), \qquad \qquad e_2= \Phi(\theta_2),
	\end{equation*}
	and let $y_1,y_2 \in \partial K$ be defined by
	\begin{equation*}
	y_1= \{ x+te_1 \colon t > 0 \} \cap \partial K, \qquad \qquad y_2= \{ x+te_2 \colon t > 0 \} \cap \partial K.
	\end{equation*}
	We distinguish three cases.
	
	{\bf{Case~I: $\theta_2>\theta_1+\pi$.}} We have
	\begin{equation*}
	\ch(y_1,y_2) \cap \{ x+te_0 \colon t > 0 \} \neq \varnothing,
	\end{equation*}
	yielding a contradiction with $e_0 \notin E_x$.
	
	{\bf{Case~II: $\theta_2<\theta_1+\pi$.}} We have
	\begin{equation*}
	\ch(y_1,y_2) \cap \{ x+te \colon t > 0 \} \neq \varnothing,
	\end{equation*}
	hence $\theta \in \Phi^{-1}(E)$.
	
	{\bf{Case~III: $\theta_2=\theta_1+\pi$.}} By {\bf{Case~I}}, we have 
	\begin{equation*}
	\theta_1 = \inf \Big\{ \theta \in \Phi^{-1}(E_x) \Big\}, \qquad \qquad \theta_2 = \sup \Big\{ \theta \in \Phi^{-1}(E_x) \Big\}.
	\end{equation*}
	Let $y \in K$. It belongs to one of the two half-planes defined by the line through $y_1,x,y_2$. Therefore, we have
	\begin{equation*} 
	\widetilde{\theta} \coloneqq \Phi^{-1} \Big( \frac{y-x}{\abs{y-x}} \Big) \in E_x, \qquad \qquad \theta_1 < \widetilde{\theta} < \theta_2 = \theta_1 + \pi,
	\end{equation*}
	and we reduce to {\bf{Case~II}} for the couples $(\theta_1,\widetilde{\theta})$ and $(\widetilde{\theta},\theta_2)$. 
		
	Therefore, $ \Phi^{-1}(E_x)$ is an interval with extremal points $a,b \in [0, 2 \pi)$. By {\bf{Case~I}}, we obtain the desired relation between $a,b$ described in \eqref{eq:relation}.
\end{proof}

In particular, $\Gamma = \partial K$ is contained in the closed section of the plane defined by the half-lines $\{ x+t \Phi(a) \colon t \geq 0  \}$ and $\{ x+t \Phi(b) \colon t \geq 0 \}$. Now, for every $x \in \Gamma = \partial K$ let $E_x$ be the cone as in Definition~\ref{def:cone} and let $e_0(x) \in \mathbb{S}^1 \setminus E_x$. Next, let $\Phi_{x} \colon [0, 2\pi) \to \mathbb{S}^1$ be the counterclockwise parametrization of the circle with starting point in $e_0(x)$ as in Definition~\ref{def:counterclockwise}. After that, let $x_1 \in \Gamma = \partial K$ and let the arclength parametrization $z = z(x_1) \colon J \to \Gamma$ be defined as in Remark~\ref{rmk:increasing_composition}. Then, we choose the counterclockwise parametrization of the circle $\Upsilon = \Upsilon(x_1) \colon [0,2\pi) \to \mathbb{S}^1$ with starting point 
\begin{equation*}
\Phi_{x_1} \Big( \inf \Big\{ \theta \in [0,2\pi) \colon \Phi_{x_1}(\theta) \in E_{x_1} \Big\} \Big).
\end{equation*}
as in Definition~\ref{def:counterclockwise}. Finally, we define the functions $\theta_l \colon (0,\ell(\Gamma)] \to [0, 2 \pi)$ and $\theta_r \colon [0,\ell(\Gamma)) \to [0,2 \pi)$ by
\begin{align*}
\theta_l (t) & \coloneqq \Upsilon^{-1} \Big( - \Phi_{z(t)} \Big( \sup \Big\{ \theta \colon \theta \in \Phi_{z(t)}^{-1}(E_{z(t)}) \Big\} \Big) \Big), \\
\theta_r (t) & \coloneqq \Upsilon^{-1} \Big( \Phi_{z(t)} \Big( \inf \Big\{ \theta \colon \theta \in \Phi_{z(t)}^{-1}(E_{z(t)}) \Big\} \Big) \Big).
\end{align*}

\begin{lemma} \label{thm:increasing}
	For all $s,t \in (0,\ell(\Gamma))$, $s < t$ we have
	\begin{equation} \label{eq:claim_1}
	\theta_r(s) \leq \theta_l(t) \leq \theta_r(t).
	\end{equation}
	Moreover, for every $s \in (0,\ell(\Gamma))$ we have
	\begin{equation} \label{eq:claim_2}
	\theta_r(0) \leq \theta_l(s) \leq \theta_r(s) \leq \theta_l(\ell(\Gamma)).
	\end{equation}
\end{lemma}
\begin{proof}
	The first inequality in \eqref{eq:claim_1} follows from
	\begin{equation} \label{eq:auxiliary_3}
	\theta_r(s) \leq \Upsilon^{-1} \Big( \frac{z(t)-z(s)}{\abs{z(t)-z(s)}} \Big) =  \Upsilon^{-1} \Big( - \frac{z(s)-z(t)}{\abs{z(s)-z(t)}} \Big) \leq \theta_l(t).
	\end{equation}
	The second inequality in \eqref{eq:claim_1} follows from Lemma~\ref{thm:rlangle_2} and the definition of a counterclockwise parametrization of $\mathbb{S}^1$ in Definition~\ref{def:counterclockwise}. The first and the third inequalities in \eqref{eq:claim_2} follow from the chain of inequalities in \eqref{eq:auxiliary_3}.
\end{proof}
	
\begin{lemma} \label{thm:coincidence}
	The functions $\theta_l$ and $\theta_r$ are increasing and have bounded variation. Moreover, they coincide $m$-almost everywhere.
\end{lemma}	
\begin{proof}
	By Lemma~\ref{thm:increasing}, the functions $\theta_l$ and $\theta_r$ are increasing. Moreover, they take values in a bounded set, hence they have bounded variation. 
	
	Now, suppose that the functions $\theta_l$ and $\theta_r$ do not coincide $m$-almost everywhere. Therefore, there exists an uncountable collection $X \subseteq (0,\ell(\Gamma))$ of points such that for every $x \in X$ we have
	\begin{equation*}
	\lim_{t \to x^{-}} \theta_r(t) \leq \theta_l(x) < \theta_r(x) \leq \lim_{t \to x^{+}} \theta_r(t).
	\end{equation*}
	Hence, we have
	\begin{equation*}
	\lim_{t \to \ell(\Gamma)^{-}} \theta_r(t) \geq \sum_{x \in X} \Big( \lim_{t \to x^{+}} \theta_r(t) - \lim_{t \to x^{-}} \theta_r(t) \Big) = \infty,
	\end{equation*}
	yielding a contradiction with $\theta_r([0,\ell(\Gamma)) \subseteq [0,2\pi)$. 
\end{proof}

\begin{lemma} \label{thm:properties}
	Fix $s \in J$ and consider the function $\phi = \phi_s$ defined by
	\begin{equation*}
	\phi \colon J \setminus \{s\} \to [0,2 \pi), \qquad \qquad \phi(t) \coloneqq \begin{cases} \displaystyle
	\Upsilon^{-1} \Big( \frac{z(s)-z(t)}{\abs{z(s)-z(t)}} \Big), \qquad \qquad & \text{if $t < s$,} \\
	\displaystyle
	\Upsilon^{-1} \Big( \frac{z(t)-z(s)}{\abs{z(t)-z(s)}} \Big), \qquad \qquad & \text{if $t > s$.}
	\end{cases}
	\end{equation*} 
	Then, the function $\phi$ is increasing.
\end{lemma}
\begin{proof}
	For all $t, u \in J \setminus \{ s\}$, $t < u$ we claim that 
	\begin{equation} \label{eq:claim_3}
	\phi(t) \leq \phi(u).
	\end{equation}
	
	Let $x_0 \in K$, $x_1 \in \Gamma = \partial K$, and let $\gamma = \gamma(x_0,x_1)$ and $z = z(x_1)$ be the associated parametrizations as in Remark~\ref{rmk:increasing_composition}. Moreover, we consider the points $z(s)$, $z(t)$, and $z(u)$. By Remark~\ref{rmk:increasing_composition}, we have 
	\begin{equation*}
	\gamma^{-1} (z(t)) < \gamma^{-1}(z(u)).
	\end{equation*}
	We distinguish three cases according to the relation between $s$, $t$, and $u$.
	
	{\bf{Case~I: $s < t < u$.}} We distinguish five additional subcases.
	
	{\bf{Case~I.i.}} We assume
	\begin{equation*}
	\gamma^{-1}(z(t)) < \gamma^{-1}(z(s))+\pi < \gamma^{-1}(z(u)).
	\end{equation*}
	By Lemma~\ref{thm:rlangle_2}, the points $z(t)$ and $z(u)$ belong to distinct open half-planes defined by the line passing through $z(s)$ and $\gamma(\gamma^{-1}(z(s))+\pi)$. Moreover, let $e_0 \in \mathbb{S}^1$ be defined by
	\begin{equation*}
	e_0 = \frac{z(s) - x_0}{\abs{z(s) - x_0}}.
	\end{equation*}
	In particular, we have
	\begin{equation*}
	- e_0 = \frac{\gamma(\gamma^{-1}(z(s))+\pi) - x_0}{\abs{\gamma(\gamma^{-1}(z(s))+\pi) - x_0}} \in E_{z(s)}.
	\end{equation*}
	By Lemma~\ref{thm:rlangle_2}, we have that $-e_0$  belongs to the interior of $E_{z(s)}$, hence we have $e_0 \in \mathbb{S}^1 \setminus E_{z(s)}$. Let $\Phi_{z(s)} \colon [0,2\pi) \to \mathbb{S}^1$ be the counterclockwise parametrization of the circle with starting point in $e_0$ as in Definition~\ref{def:counterclockwise}. To prove the desired inequality in \eqref{eq:claim_3}, it is enough to prove the inequality
	\begin{equation} \label{eq:claim_4}
	\Phi_{z(s)}^{-1} \Big( \frac{z(t)-z(s)}{\abs{z(t) - z(s)}} \Big) \leq \Phi_{z(s)}^{-1} \Big( \frac{z(u)-z(s)}{\abs{z(u) - z(s)}} \Big).
	\end{equation}
	To prove the desired inequality in \eqref{eq:claim_4}, we argue by contradiction and we suppose that 
	\begin{equation*}
	\Phi_{z(s)}^{-1} \Big( \frac{z(u)-z(s)}{\abs{z(u) - z(s)}} \Big) < \Phi_{z(s)}^{-1} \Big( \frac{z(t)-z(s)}{\abs{z(t) - z(s)}} \Big) .
	\end{equation*}
	Therefore, the points $z(t)$ and $z(u)$ belong to the same open half-plane defined by the line passing through $z(s)$ and $\gamma(\gamma^{-1}(z(s))+\pi)$, yielding a contradiction.

	{\bf{Case~I.ii.}} We assume
	\begin{equation*}
	\gamma^{-1}(z(t)) < \gamma^{-1}(z(u)) < \gamma^{-1}(z(s))+\pi .
	\end{equation*}
	Let $l_s$ and $l_u$ be the half-lines emanating from $x_0$ and passing through $z(s)$ and $z(u)$ respectively. Since the parametrization $z$ is counterclockwise, the point $z(t)$ belongs to the open section of the plane defined by the angle strictly smaller than $\pi$ between $l_s$ and $l_u$. To prove the desired inequality in \eqref{eq:claim_3}, we argue by contradiction and we suppose that $\phi(u) <  \phi(t)$. Let $l'_u$ and $l'_s$ be the half-lines emanating from $z(s)$ and passing through $z(u)$ and $x_0$ respectively. Since $\phi(u) <  \phi(t)$, the point $z(t)$ belongs to the open section of the plane defined by the angle strictly smaller than $\pi$ between $l'_u$ and $l'_s$. Therefore, we obtain
	\begin{equation*}
	z(t) \in \ch ( z(s),z(u),x_0 ) \subseteq K,
	\end{equation*}
	yielding a contradiction with $z(t) \in \Gamma = \partial K$.

	{\bf{Case~I.iii.}} We assume
	\begin{equation*}
	\gamma^{-1}(z(s))+\pi < \gamma^{-1}(z(t)) < \gamma^{-1}(z(u)) .
	\end{equation*}
	We argue by contradiction and we suppose that $\phi(u) <  \phi(t)$. Analogously to {\bf{Case~I.ii}}, we obtain 
	\begin{equation*}
	z(u) \in \ch ( z(s),z(t),x_0 ) \subseteq K,
	\end{equation*}
	yielding a contradiction with $z(u) \in \Gamma = \partial K$.
	
	{\bf{Case~I.iv.}} We assume
	\begin{equation*}
	\gamma^{-1}(z(t)) < \gamma^{-1}(z(s))+\pi = \gamma^{-1}(z(u)).
	\end{equation*}
	The desired inequality in \eqref{eq:claim_3} follows from the fact that the parametrizations $z$, $\gamma$, and $\Upsilon$ are counterclockwise.
	
	{\bf{Case~I.v.}} We assume
	\begin{equation*}
	\gamma^{-1}(z(t)) = \gamma^{-1}(z(s))+\pi < \gamma^{-1}(z(u)).
	\end{equation*}
	We prove the desired inequality in \eqref{eq:claim_3} analogously to {\bf{Case~I.iv}}.

	{\bf{Case~II: $	t < u < s$.}} We distinguish five additional subcases and we prove the desired inequality in \eqref{eq:claim_3} analogously to {\bf{Case~I}}.

	{\bf{Case~III: $t < s < u$.}} We prove the desired inequality in \eqref{eq:claim_3} by {\bf{Case~I}} applied to $\phi_t$ and {\bf{Case~II}} applied to $\phi_u$.	
\end{proof}

\begin{lemma} \label{thm:continuity}
	The function $\theta_r$ is right-continuous, and the function $\theta_l$ is left-continuous.
\end{lemma}	
\begin{proof}
We focus on the case of the function $\theta_r$. The case of the function $\theta_l$ is analogous. 

We want to prove that for every $s \in J$ we have
\begin{equation*}
\theta_r(s) = \lim_{t \to s^{+}} \theta_r (t). 
\end{equation*}
We fix $s \in J$. By Lemma~\ref{thm:coincidence}, the limit is an infimum and it is enough to prove that for every $\varepsilon > 0$ there exists $t > s$ such that
\begin{equation*}
\theta_r(t) \leq \theta_r (s) + 2 \varepsilon. 
\end{equation*}
By the definition of $\theta_r$, there exists $u \in J$, $u > s$ such that
\begin{equation} \label{eq:auxiliary_6}
\theta_r(s) \leq \Upsilon^{-1}\Big(\frac{z(u)-z(s)}{\abs{z(u)-z(s)}}\Big) \leq \theta_r(s)+ \varepsilon.
\end{equation}
By Lemma~\ref{thm:points_ch}, the piece $z((s,u))$ of the curve $\Gamma$ is in the closure of the half-plane defined by the line passing through $z(s)$ and $z(u)$. In particular, by the definition of $\theta_r$ and $\theta_l$, and Lemma~\ref{thm:properties}, for every $t \in J$, $s < t < u$ we have
\begin{equation*}
\theta_r(s) \leq \Upsilon^{-1} \Big( \frac{z(t) - z(s)}{\abs{z(t) - z(s)}} \Big)
\leq \Upsilon^{-1} \Big( \frac{z(u) - z(t)}{\abs{z(u) - z(t)}} \Big) \leq \theta_l (u).
\end{equation*}
We distinguish two cases.

{\bf{Case~I.}} We assume
\begin{equation*}
\Upsilon^{-1}\Big(\frac{z(u)-z(s)}{\abs{z(u)-z(s)}}\Big) = \theta_r(s).
\end{equation*}
Then, we have $\theta_l(u) = \theta_r(s)$. By Lemma~\ref{thm:increasing}, for every $t \in J$, $s < t < u$ we have 
\begin{equation*}
\theta_r(t) = \theta_r(s).
\end{equation*} 

{\bf{Case~II.}} We assume
\begin{equation*}
\Upsilon^{-1}\Big(\frac{z(u)-z(s)}{\abs{z(u)-z(s)}}\Big) > \theta_r(s).
\end{equation*}
Then, we have $\theta_l(u) > \theta_r(s)$. By Lemma~\ref{thm:rlangle_2}, there exists $t \in J$, $s < t < u$ such that
\begin{equation} \label{eq:auxiliary_7}
\begin{aligned}
0 \leq \Upsilon^{-1}\Big(\frac{z(u)-z(t)}{\abs{z(u)-z(t)}}\Big) -  \Upsilon^{-1}\Big(\frac{z(u)-z(s)}{\abs{z(u)-z(s)}}\Big) & =  \\
= \Phi_{z(u)}^{-1}\Big(\frac{z(t)-z(u)}{\abs{z(t)-z(u)}}\Big) - & \Phi_{z(u)}^{-1}\Big(\frac{z(s)-z(u)}{\abs{z(s)-z(u)}}\Big) \leq \varepsilon.
\end{aligned}
\end{equation}
Together with the definition of $\theta_r$, the inequalities in \eqref{eq:auxiliary_6} and \eqref{eq:auxiliary_7} yield
\begin{equation*}
\theta_r (t) \leq \Upsilon^{-1}\Big(\frac{z(u)-z(t)}{\abs{z(u)-z(t)}}\Big) \leq \theta_r(s) + 2\varepsilon. 
\end{equation*}
\end{proof}

We turn now to the derivatives $z_l'$ and $z_r'$ and their relation with the functions $\theta_l$ and $\theta_r$.

\begin{proof}[Proof of Theorem~\ref{thm:derivative_z}]

In the proof that the derivatives are well-defined, we focus on the case of the right derivative $z'_r$. The case of the left derivative $z'_l$ is analogous. 

We want to prove that for every $s \in J$ the limit
\begin{equation*}
z'_r(s) \coloneqq \lim_{t \to s^{+}} \frac{z(t)-z(s)}{t-s} ,
\end{equation*}
is well-defined in $\mathbb{S}^1$. 

We fix $s \in J$, we choose $\varepsilon>0$ such that $ s + \varepsilon \in J$. First, we consider the function $\psi = \psi(s)$ defined by
\begin{equation*}
\psi \colon [s, s+ \varepsilon) \to [0,2 \pi), \qquad \qquad \psi(t) \coloneqq \Upsilon^{-1} \Big( \frac{z(t)-z(s)}{\abs{z(t)-z(s)}} \Big) - \theta_r(s).
\end{equation*} 
By the definition of $\theta_r$ and Lemma~\ref{thm:properties}, the following limit exists and we have
\begin{equation*}
\lim_{t \to s^{+}} \Upsilon^{-1} \Big( \frac{z(t)-z(s)}{\abs{z(t)-z(s)}} \Big) \geq \theta_r(s).
\end{equation*}
Moreover, by the definition of $\theta_r$, for every $\delta>0$ there exists $t \in J$, $t > s$ such that
\begin{equation*}
\Upsilon^{-1}\Big(\frac{z(t)-z(s)}{\abs{z(t)-z(s)}}\Big) \leq \theta_r(s)+\delta.
\end{equation*}
Therefore, by Lemma~\ref{thm:properties} we have
\begin{equation} \label{eq:limit}
\lim_{t \to s^{+}} \Upsilon^{-1}\Big(\frac{z(t)-z(s)}{\abs{z(t)-z(s)}}\Big) = \theta_r(s).
\end{equation}
To conclude that $z_r'$ is well-defined in $\mathbb{S}^1$, it is enough to prove that 
\begin{equation} \label{eq:auxiliary_5}
\lim_{t \to s^{+}} \frac{\abs{z(t)-z(s)}}{t-s} = 1.
\end{equation}

By Lemma~\ref{thm:continuity}, for $\rho > 0$ small enough we have
\begin{equation} \label{eq:condition}
\theta_l(s+\rho) \leq \theta_r(s+\rho) < \theta_r(s)+\frac{\pi}{2}.
\end{equation}
For every $t \in (s,s+\rho)$ let $y(s,t)$ be the intersection between the half-line emanating from $z(s)$ in the direction $\Upsilon^{-1}(\theta_r(s))$ and the half-line emanating from $z(t)$ in the direction $- \Upsilon^{-1}(\theta_l(t))$. By the inequalities in \eqref{eq:condition}, the arc $z([s,t])$ of the curve $\Gamma$ is contained in the closure of the open convex hull $\ch(z(s),z(t),y(s,t))$, which is an obtuse triangle. This obtuse triangle is contained in a right-triangle with the segment between $z(s)$ and $z(t)$ as hypotenuse and a cathetus on the half-line emanating from $z(s)$ in the direction $\Upsilon^{-1}(\theta_r(s))$. By an argument analogous to that used to prove Theorem~\ref{thm:rectifiable}, we have
\begin{equation*}
t-s \leq \abs{z(t) - y(s,t)} + \abs{y(s,t) - z(s)} \leq (\sin \psi(t)+ \cos \psi(t) ) \abs{z(t)-z(s)},
\end{equation*}
where $t-s$ is the length of the arc $z([s,t])$ of the curve $\Gamma$. Therefore, we have
\begin{equation*}
\lim_{t \to s^{+}} \frac{\abs{z(t)-z(s)}}{t-s} \geq \lim_{t \to s^{+}} \frac{1}{\sin \psi(t)+ \cos \psi(t)} = 1.
\end{equation*}
Together with $\abs{z(t)-z(s)} \leq t-s$, the inequality in the previous display yields the desired equality in \eqref{eq:auxiliary_5}. 

In particular, by the equality in \eqref{eq:limit}, we proved
\begin{equation} \label{eq:auxiliary_4} 
z'_l = \Upsilon \circ \theta_l, \qquad \qquad z'_r = \Upsilon \circ \theta_r,
\end{equation}
Therefore, by Lemma~\ref{thm:coincidence}, the functions $z'_l$ and $z'_r$ coincide $m$-almost everywhere.
\end{proof}

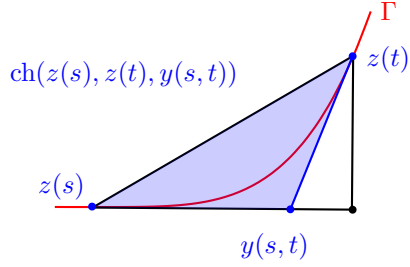
\begin{figure}[H]
	\centering
	\begin{tikzpicture}[scale=1]
	\draw[scale=1,domain=-0.5:3.7,smooth,variable=\x,red,thick] plot ({\x-0.42},{\x*\x*\x*\x/72+0.73}) node[right] {$\Gamma$};
	\draw (-0.4,1) node [above,blue,left]{$z(s)$};
	\draw (2,0.5) node [blue,below]{$y(s,t)$};
	\draw (3.1,2.7) node [above,blue, right]{$z(t)$};
	\draw (0,2.5) node [blue]{$\ch(z(s),z(t),y(s,t))$};
	\draw[rotate around={30:(0,0)},thick] (0,0.835) -- (2.95,-0.9);
	\draw[rotate around={120:(0,0)},thick] ({0.6*(2-3.5)},{0.6*(2.835-7.8)}) -- ({0.6*(4.95-3.5)},{0.6*(1.1-7.8)});
	\draw[rotate around={30:(0,0)},thick] (4,0.85) -- (0,0.835);
	\draw[rotate around={30:(0,0)},blue,thick] (4,0.85) -- (2.265,-0.50);
	\fill[rotate around={30:(0,0)},blue,opacity=0.2] (0,0.835) -- (2.265,-0.50) -- (4,0.85) -- (0,0.835);
	
	\fill[rotate around={30:(0,0)},blue] (0,0.85) circle (1.5 pt);
	\fill[rotate around={30:(0,0)},blue] (2.265,-0.50) circle (1.5 pt);
	\fill[rotate around={30:(0,0)},blue] (4,0.85) circle (1.5 pt);
	\fill[rotate around={120:(0,0)}] ({0.6*(2-3.53)},{0.6*(2.835-7.8)}) circle (1.5 pt);
	\end{tikzpicture}  
	\caption{The obtuse triangle $\ch(z(s),z(t),y(s,t))$ shaded in blue and the associated right-triangle in black.}
\end{figure}
Finally, we recall a result about the differentiability of a function of bounded variation.

\begin{theorem} [Stein and Shakarchi \cite{MR2129625}, Theorem 3.4] \label{thm:derivative_increasing}
	Let $a,b \in \R$. If $F$ is of bounded variation on $[a,b]$, then $F$ is differentiable almost everywhere.
\end{theorem}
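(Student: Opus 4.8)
The plan is to reduce, via the Jordan decomposition, to the case of a monotone function, and then to prove that a nondecreasing function on $[a,b]$ is differentiable almost everywhere by a double Vitali covering argument applied to its four Dini derivates. First, recall that an $F$ of bounded variation on $[a,b]$ can be written as $F = F_1 - F_2$ with $F_1, F_2$ bounded and nondecreasing (for instance, $F_1(x)$ and $F_2(x)$ the positive and negative variations of $F$ on $[a,x]$). Since a difference of two functions each differentiable at a point $x$ is differentiable at $x$, it suffices to show that a nondecreasing $f \colon [a,b] \to \R$ is differentiable a.e.

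For such an $f$, introduce at each interior point the four Dini derivates $D^+f, D_+f, D^-f, D_-f$. One always has $D_+f \le D^+f$ and $D_-f \le D^-f$, so $f$ is differentiable with finite derivative at $x$ exactly when all four derivates agree and are finite there. The key claim is that $\{x : D^+f(x) > D_-f(x)\}$ is a null set (and, symmetrically, so is $\{x : D^-f(x) > D_+f(x)\}$); this suffices, since then a.e.\ $D^+f \le D_-f \le D^-f \le D_+f \le D^+f$, forcing equality. Decomposing $\{D^+f > D_-f\}$ as the countable union over rationals $p<q$ of $E_{p,q} = \{x : D_-f(x) < p < q < D^+f(x)\}$, I would fix an open $U \supseteq E_{p,q}$ with $m(U) < m^*(E_{p,q}) + \varepsilon$. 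The condition $D_-f(x) < p$ furnishes, for each $x \in E_{p,q}$, arbitrarily short intervals $[x-h,x] \subseteq U$ with $f(x) - f(x-h) < p h$; these form a Vitali cover of $E_{p,q}$, so a pairwise disjoint subfamily $\{[x_i - h_i, x_i]\}$ covers $E_{p,q}$ up to a null set, with $\sum_i (f(x_i) - f(x_i - h_i)) < p \sum_i h_i \le p\, m(U)$. Working inside the open set $V = \bigcup_i (x_i - h_i, x_i)$ (which contains $E_{p,q}$ up to a null set), the condition $D^+f(x) > q$ now furnishes, for each relevant $x$, short intervals $[x, x+k] \subseteq V$ with $f(x+k) - f(x) > q k$; a second Vitali selection yields pairwise disjoint $\{[y_j, y_j+k_j]\}$ covering $E_{p,q}$ up to a null set, with $\sum_j (f(y_j+k_j) - f(y_j)) > q \sum_j k_j \ge q\, m^*(E_{p,q})$. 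Since the $[x_i-h_i,x_i]$ are disjoint, each $[y_j, y_j+k_j]$ lies in a single $(x_{i(j)} - h_{i(j)}, x_{i(j)})$, and because $f$ is nondecreasing, the sum of the $f$-increments of the inner intervals contained in a fixed outer interval is at most the $f$-increment of that outer interval. Hence $q\, m^*(E_{p,q}) < \sum_j (f(y_j+k_j)-f(y_j)) \le \sum_i (f(x_i)-f(x_i-h_i)) < p\, m(U) < p\,(m^*(E_{p,q}) + \varepsilon)$, and letting $\varepsilon \to 0$ gives $q\, m^*(E_{p,q}) \le p\, m^*(E_{p,q})$, so $m^*(E_{p,q}) = 0$. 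Taking unions over $p<q$ (and running the mirror-image argument for $D^-f$ versus $D_+f$) shows all four Dini derivates of $f$ coincide a.e.

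To finish, I would rule out an infinite common value. For fixed $M>0$, each $x$ with $D^+f(x) > M$ admits arbitrarily short intervals $[x,x+h]$ with $f(x+h) - f(x) > M h$; a disjoint Vitali selection of such intervals, all inside an open set of measure $< m^*(\{D^+f > M\}) + \varepsilon$, gives $M\, m^*(\{D^+f > M\}) \le \sum_i (f(x_i+h_i) - f(x_i)) \le f(b) - f(a)$ by disjointness and monotonicity, so $m^*(\{D^+f > M\}) \le (f(b)-f(a))/M$. Letting $M \to \infty$ shows the common derivate is finite a.e.; hence $f'(x)$ exists and is finite for a.e.\ $x \in [a,b]$, and by the reduction above the same holds for the original $F$.

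The main obstacle is the bookkeeping in the two nested Vitali selections: one must arrange the second family of intervals to sit inside the disjoint intervals produced by the first, and then invoke monotonicity of $f$ so that the total $f$-variation carried by the inner intervals cannot exceed that carried by the outer ones — this comparison is precisely what forces the self-contradictory inequality $q\,m^*(E_{p,q}) \le p\,m^*(E_{p,q})$. The remaining ingredients (the Jordan decomposition and the Vitali covering lemma) are standard, and the verification that the selected intervals may be taken inside the prescribed open sets is routine.
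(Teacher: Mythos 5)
Your proof is correct; this is the classical Vitali covering argument for Lebesgue's theorem on differentiability of monotone functions, preceded by the Jordan decomposition. The chain of inequalities is set up correctly (nullity of $\{D^+f > D_-f\}$ and $\{D^-f > D_+f\}$ together with the trivial inequalities $D_\pm f \le D^\pm f$ forces all four derivates to coincide a.e.), the nested Vitali selections are handled carefully---the crucial point that each inner interval $[y_j, y_j+k_j]$ sits inside a single outer interval so that monotonicity lets you sum the inner increments against the outer ones is exactly what closes the contradiction---and the final estimate $m^*(\{D^+f > M\}) \le (f(b)-f(a))/M$ rules out infinite derivates.

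Note, however, that the paper does not prove this statement: it is a black-box citation to Stein and Shakarchi's \emph{Real Analysis} (their Theorem~3.4), so there is no ``paper's own proof'' to compare against. It is worth noting that the argument in that reference is genuinely different from yours. Stein and Shakarchi prove the monotone case via the \emph{rising sun lemma} (Riesz's lemma): for continuous $G$, the set $\{x : G(y) > G(x) \text{ for some } y > x\}$ is an open union of intervals $(a_k,b_k)$ with $G(a_k) \le G(b_k)$, and this is used to bound the measure of the sets where the one-sided Dini derivates of an increasing function are large or disagree. That route avoids the Vitali covering theorem entirely (at the cost of a separate argument to dispose of the jump part, since the sunrise lemma requires continuity), whereas your route avoids the continuity issue entirely (monotone functions have a Vitali cover by intervals regardless of jumps) at the cost of importing the Vitali covering theorem as a nontrivial tool. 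Both are standard and both are rigorous; yours is the proof found in Royden or Wheeden--Zygmund, while the cited one is the Riesz-style proof.
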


\begin{proof} [Proof of Theorem~\ref{thm:derivative_z'}]
	By Lemma~\ref{thm:coincidence}, the functions $\theta_l$ and $\theta_r$ have bounded variation. By Theorem~\ref{thm:derivative_increasing}, they admit derivatives $\theta_l'$ and $\theta_r'$ well-defined $m$-almost everywhere. 
	
	Moreover, by Lemma~\ref{thm:increasing}, the function $\theta_r - \theta_l$ is positive everywhere. By Lemma~\ref{thm:coincidence}, it has bounded variation and it is zero $m$-almost everywhere. By Theorem~\ref{thm:derivative_increasing}, it admits a derivative $m$-almost everywhere, hence the derivative is zero $m$-almost everywhere. Therefore, the functions $\theta'_l$ and $\theta'_r$ coincide $m$-almost everywhere.
		
	As we concluded in \eqref{eq:auxiliary_4}, we have 
	\begin{equation} \label{eq:determinants_1}
	z_l'(t) = \begin{pmatrix} \cos \theta_l(t) \\ \sin \theta_l(t) \end{pmatrix} , \qquad \qquad
	z_r'(t) = \begin{pmatrix} \cos \theta_r(t) \\ \sin \theta_r(t) \end{pmatrix} ,
	\end{equation}
	hence the functions $z_l''$ and $z_r''$ are well-defined $m$-almost everywhere by
	\begin{equation} \label{eq:determinants_2}
	z_l''(t) = \begin{pmatrix} - \sin \theta_l(t) \\ \cos \theta_l(t) \end{pmatrix} \theta_l'(t), \qquad \qquad
	z_r''(t) = \begin{pmatrix} - \sin \theta_r(t) \\ \cos \theta_r(t) \end{pmatrix} \theta_r'(t).
	\end{equation}
	In particular, they coincide $m$-almost everywhere. 
\end{proof}

\begin{proof} [Proof of Theorem~\ref{thm:kappa_zeta''}]
By Lemma~\ref{thm:increasing}, the Borel measure $\sigma$ on $J$ defined in \eqref{eq:sigma} is positive. Now, by the equalities in \eqref{eq:auxiliary_4}, for all $a,b \in J$, $a \leq b$ we have
\begin{align*}
& \sigma((a,b)) = \max\{ 0, \theta_l(b) - \theta_r(a) \}, \qquad \qquad & \sigma((a,b]) = \theta_r(b) - \theta_r(a), \\
& \sigma([a,b)) = \theta_l(b) - \theta_l(a), \qquad \qquad & \sigma([a,b]) = \theta_r(b) - \theta_l(a).
\end{align*}
The metric density associated with the absolutely continuous part of $\sigma$ with respect to the Lebesgue measure $m$ on $J$ is $\kappa$.

Next, we define the Borel measure $\sigma_r$ on $J$ as follows. For all $a,b \in J$, $a \leq b$ we define
\begin{align*}
& \sigma_r((a,b)) = \max\{ 0, \lim_{t \to b^{-}}\theta_r(t) - \theta_r(a) \}, \qquad \qquad & \sigma_r((a,b]) = \theta_r(b) - \theta_r(a), \\
& \sigma_r([a,b)) = \lim_{t \to b^{-}}\theta_r(t) - \theta_r(a), \qquad \qquad & \sigma_r([a,b]) = \theta_r(b) - \theta_r(a).
\end{align*}
The metric density associated with the absolutely continuous part of $\sigma_r$ with respect to the Lebesgue measure $m$ on $J$ coincides $m$-almost everywhere with $\theta_r'$.

For every $b \in J$ we consider the sequence of sets $\{ (b-\varepsilon, b] \colon \varepsilon > 0 \}$ that shrinks to $b$ nicely as in Definition~\ref{def:nicely}. On each of these sets, the Borel measure $\sigma - \sigma_r$ is zero. By Theorem~\ref{thm:derivative}, the metric density associated with the absolutely continuous part of $\sigma - \sigma_r$ with respect to the Lebesgue measure $m$ on $J$ is zero $m$-almost everywhere. Therefore, the functions $\kappa$ and $\theta'_r$ coincide $m$-almost everywhere. Analogously we prove that the functions $\kappa$ and $\theta'_l$ coincide $m$-almost everywhere. By Theorem~\ref{thm:derivative_z'} and the equalities in \eqref{eq:determinants_1} and \eqref{eq:determinants_2}, for $m$-almost every $t \in J$ we have 
\begin{equation*}
\theta_l'(t) = \det \begin{pmatrix} z_l'(t) & z_l''(t)  \end{pmatrix}, \qquad \qquad \theta_r'(t) = \det \begin{pmatrix} z_r'(t) & z_r''(t)  \end{pmatrix},
\end{equation*}
yielding the desired result.
\end{proof}

\bibliographystyle{amsplain}
\bibliography{mybibliography}

\end{document}